  \newcounter{mnote}
  \let\oldmarginpar\marginpar
    \renewcommand\marginpar[1]{\-\oldmarginpar[\raggedleft\footnotesize #1]%
    {\raggedright\footnotesize #1}}
\newtheorem{theorem}{Theorem}[section]
\newtheorem{lemma}[theorem]{Lemma}
\newtheorem{remark}[theorem]{Remark}
\newcommand{\dx}{\,{\rm d}x}
\newcommand{\dd}{\,{\rm d}}
\newcommand{\mbb}{\mathbb}
\newcommand{\mcal}{\mathcal}
\newcommand{\supp}{\operatorname{supp}}
\newcommand{\curl}{{\rm curl\,}}
\renewcommand{\div}{\operatorname{div}}
\newcommand{\grad}{{\rm grad\,}}
\begin{document}
\title[Multigrid Methods for Saddle Point Problems]{Multigrid Methods for Constrained Minimization Problems and Application to Saddle Point Problems}
%\author{Long Chen}
\author[L.~Chen]{Long Chen}
\address[L.~Chen]{Department of Mathematics, University of California at Irvine, Irvine, CA 92697, USA}
\email{chenlong@math.uci.edu}
\thanks{LC has been supported by NSF Grant DMS-1418934.}

\subjclass[2010]{
65N55;   %% Multigrid methods; domain decomposition
65F10;   %%  Iterative methods for linear systems
65N22;   %%  Solution of discretized equations
65N30;   %%  Finite elements, Rayleigh-Ritz and Galerkin methods, finite methods;
%35S15;   %%  Boundary value problems for pseudodifferential operators
%65N12.   %%  Stability and convergence of numerical methods
}

\date{\today}

\keywords{Constrained optimization, saddle point system, mixed finite elements, multigrid methods}

\begin{abstract}
The first order condition of the constrained minimization problem leads to a saddle point problem. A multigrid method using a multiplicative Schwarz smoother for saddle point problems can thus be interpreted as a successive subspace optimization method based on a multilevel decomposition of the constraint space. Convergence theory is developed for successive subspace optimization methods based on two assumptions on the space decomposition: stable decomposition and strengthened Cauchy-Schwarz inequality, and successfully applied to the saddle point systems arising from mixed finite element methods for Poisson and Stokes equations. Uniform convergence is obtained without the full regularity assumption of the underlying partial differential equations. As a byproduct, a V-cycle multigrid method for non-conforming finite elements is developed and proved to be uniform convergent with even one smoothing step.
\end{abstract}

\maketitle

\section{Introduction}
Given a quadratic energy $E(v)$ defined on a Hilbert space $\mcal V$, we consider the constrained minimization problem:
\begin{equation}\label{intro:main-opt}
\min _{v\in \mcal K} E(v),
\end{equation}
where $\mcal K\subset \mcal V$ is the null space of  a linear and bounded operator $B$ defined on $\mcal V$. By introducing the Lagrange multiplier for the constraint, we can find the minimizer of \eqref{intro:main-opt} by solving a saddle point system. In this paper, we shall design and analyze multigrid methods for the constrained minimization problem \eqref{intro:main-opt} and apply them to the saddle point systems arisings from mixed finite element discretization of Poisson, Darcy, and Stokes equations. 

We shall adapt the constraint decomposition methods developed by Tai for nonlinear variational inequalities~\cite{Tai.X2003} to the constrained minimization problem. Let $\mcal K = \sum_{i=1}^N \mcal K_i$ be a space decomposition. Our method consists of solving a local constrained minimization problem in each subspace $\mcal K_i$ which is equivalent to solving a small saddle point problem. Thus our relaxation can be interpreted as a multiplicative overlapping Schwarz method which  is known as Vanka smoother~\cite{Vanka1986} in the context of computational fluid dynamics. With a proper multilevel decomposition, our method becomes the classical V-cycle multigrid method.

Assuming that the decomposition $\mcal K = \sum_{i=1}^N \mcal K_i$ satisfies  two assumptions: energy stable decomposition (SD) and strengthened Cauchy-Schwarz inequality (SCS), we are able to prove the convergence of our method
$$
E(u^{k+1}) - E(u)\leq \left (1-\frac{1}{1+C_AC_S}\right) \left [E(u^k) - E(u)\right ],
$$
where $u^{k}$ is the $k$-th iteration, and $C_A$ and $C_S$ are positive constants in (SD) and (SCS).  We also extend the analysis to the case where the local constrained minimization problem is not solved exactly but one gradient iteration is applied. 

It is known that numerically multiplicative Schwarz smoother leads to an efficient multigrid methods for saddle point problems~\cite{Schoberl1999,Schoberl2003}, however, theoretical analysis for the convergence is only available for less efficient additive versions~\cite{Schoberl1999,Schoberl2003}. Our new framework can fill this gap. Furthermore, the optimal choice of the relaxation parameter used in the inexact solvers of local problems can be derived from the minimization point of view.

%\LC{Literature review.}

We then apply our method to the saddle point systems arising from mixed finite element methods of Poisson, Darcy, and Stokes equations. By verifying assumptions (SD) and (SCS) for multilevel decompositions of H(div) element spaces, we will prove the uniform convergence of a V-cycle multigrid method for mixed finite element methods for the Poisson and Darcy equations. 
Our smoother is related to the overlapping Schwarz method developed for H(div) problems in~\cite{Ewing.R;Wang.J1992a,Vassilevski.P;Wang.J1992,Mathew1993,Mathew1993a,Arnold.D;Falk.R;Winther.R2000}. But our analysis from the energy minimization point of view is more transparent. 
We note that a similar stable multilevel decomposition for the Raviart-Thomas space has been proposed in~\cite{Vassilevski.P;Wang.J1992} in two dimensions and in~\cite{Hiptmair1999,Arnold.D;Falk.R;Winther.R2000} in three dimensions. Our decomposition for three dimensional case is new and does not require the duality argument and thus relax the full regularity assumption needed in~\cite{Hiptmair1999,Arnold.D;Falk.R;Winther.R2000}. 

We use the equivalence between Crouzeix-Raviart (CR) non-conforming methods and mixed methods to develop a V-cycle multigrid method for non-conforming methods of Poisson equation and prove its uniform convergence. Existing convergence proofs of multigrid methods for non-conforming methods~\cite{Brenner.S1989,Oswald.P1997,Brenner.S1999,Brenner2003a,Oswald.P2008} cannot cover V-cycles with few smoothing steps while our new framework can. The two ingredients of our new multigrid method for non-conforming methods are: the overlapping Schwarz smoothers, and inter-grid transfer operators through the nested flux spaces. 

For discrete Stokes equations, we apply our theory to divergence free and nested finite element spaces, e.g., Scott-Voligious elements~\cite{Scott.L;Vogelius.M1985}. Again traditional multigrid convergence proofs for Stokes equations requires the full regularity assumption~\cite{Verfurth1984,Brenner.S1990,Brenner1996a,Braess.D;Sarazin.R1997,Zulehner.W2000,Olshanskii2011}. Using the framework developed in this paper, we can obtain multigrid convergence without the full regularity assumption. Very recently, Brenner, Li, and Sung~\cite{Brenner2014} have developed new multigrid methods for Stokes equations and have proved the uniform convergence without the full regularity assumption. The convergence result of~\cite{Brenner2014} is, however, restricted to W-cycle multigrid methods with sufficient many smoothing steps. Here we consider V-cycle multigrid with only one smoothing. 
Furthermore, smoothers developed in~\cite{Brenner2014} are less efficient than Vanka-type smoothers considered here; see numerical examples in~\cite{Schoberl1999,Brenner2014}. On the other hand, the framework developed in~\cite{Brenner2014} can be applied to any stable mixed finite element discretization of Stokes equation and in \cite{Brenner2015} such convergence theory is also extended to the Darcy systems, while the current theory can be only applied to the case when the constrained subspaces are nested. For non-nested constrained subspaces, an additional projector is needed and an analysis for W-cycle multigrid without the full regularity assumption can be found in~\cite{Chen2015b}. For popular finite element pairs of Stokes equations, a fast multigrid method using least square distributive Gauss-Sedel smoother has been developed in~\cite{Wang2013} for Stokes equations and generalize to Oseen problem in~\cite{Chen2013e}.

Although most of the abstract theory, either based on the Xu-Zikatanov identity~ \cite{Xu.J;Zikatanov.L2002} or following the Tai-Xu approach~\cite{Tai2001}, has been developed in certain form in the literature, the application to multigrid methods for solving saddle point systems are new and lead to several new contribution of the multigrid theory for saddle point systems: a convergence proof of V-cycle with even one smoothing step, a convergence proof without full regularity assumption, and a convergence proof for the multiplicative Schwarz smoother. Stable decomposition of several finite element spaces established in this paper also have their own interest. 

The rest of this paper is structured as follows. In Section 2, we introduce the algorithm. In Section 3, we give a convergence proof using the X-Z identity and in Section 4, we present an alternative proof based on the constraint subspace optimization method. We extend the convergence proof to the inexact local solver in Section 5. In Section 6, 7, and 8, we apply our method to mixed finite element methods for the Poisson and Darcy equations, non-conforming finite element methods for the Poisson equation, and mixed finite element methods for the Stokes equations, respectively. In the last section, we give conclusion and outlook for future work.
%Throughout the article, we will use the following short notation, $x\lesssim y$ means $x\le Cy,$ $x\gtrsim y$ means $x\ge cy$ and $x\eqsim y$ means $cx\leq y\le Cx$ where $c$ and $C$ are generic positive constants independent of the variables appearing in the inequalities and any other parameters
%related to mesh and space.

\section{Algorithm}\label{sec:algorithm}
Let $\mcal H$ be a Hilbert space equipped with inner product $(\cdot,\cdot)$ and $\mcal V\subset \mcal H$ be a closed subspace and thus $\mcal V$ is also a Hilbert space. Suppose $A: \mcal V\to \mcal V$ is a symmetric and positive definite (SPD) operator with respect to $(\cdot,\cdot)$, which introduces a new inner product $(u,v)_A := (Au,v)=(u,Av)$ on $\mcal V$. The norm associated to $(\cdot,\cdot)$ or $(\cdot,\cdot)_A$ will be denoted by $\|\cdot\|$ or $\|\cdot\|_A$, respectively. Let $\mcal P$ be another Hilbert space and let $B: \mcal V\to \mcal P$ be a linear  operator. With a slight abuse of notation, we still denote the inner product of $\mcal P$ by $(\cdot,\cdot)$. In most problems of consideration, the inner product $(\cdot,\cdot)$ for $\mcal H$ is the vector $L^2$-inner product while for $\mcal P$ it is the scalar $L^2$-inner product. The transpose $B^T: \mcal P \to \mcal V$ is the adjoint of $B$ in the $(\cdot, \cdot)$ inner product, i.e., $(Bv, q) = (v, B^Tq)$ for all $v\in \mcal V, q\in \mcal P$.

%Let $\mcal H$ be a Hilbert space equipped with inner product $(\cdot,\cdot)$ and let $\mcal V\subset \mcal H$ be a subspace. Assume that  $A: \mcal V\to \mcal V$ is a symmetric and positive definite (SPD) operator with respect to $(\cdot,\cdot)$, which also introduces a new inner product $(u,v)_A := (Au,v)=(u,Av)$ on $\mcal V$. The norm associated to $(\cdot,\cdot)$ or $(\cdot,\cdot)_A$ will be denoted by $\|\cdot\|$ or $\|\cdot\|_A$, respectively. 

For an $f\in \mcal H$, we define the Dirichlet-type energy:
\begin{equation}
E(v) = \frac{1}{2}\|v\|_A^2 - ( f,v ), \quad \text{ for } v\in \mcal V.
\end{equation}
In this paper we always identify a functional in the dual space $\mcal H'$ as an element in $\mcal H$ through the Riesz map induced by $(\cdot,\cdot)$. Denote by $\mcal K = \ker(B)$ the subspace satisfying the constraint $Bv=0$, i.e., the null space of $B$. We are interested in the following constrained minimization problem:
\begin{equation}\label{main-opt}
\min _{v\in \mcal K} E(v).
\end{equation}
%The derivative of the energy $E'(v) = Av - f$. 

Since the energy is quadratic and convex, there exists a unique solution to~\eqref{main-opt} and the minimizer $u$ of~\eqref{main-opt} is characterized as the solution of the equation: Find $u\in \mcal K$ such that
\begin{equation}\label{spd}
(Au, v) = ( f, v ) \quad \text{ for all } v\in \mcal K.
\end{equation}
We introduce the operator $A_{\mcal K}: \mcal K\to \mcal K$ as $(A_{\mcal K} u, v) = (Au, v)$ for all $u, v\in \mcal K$ and the operator $Q_{\mcal K}: \mcal H\to \mcal K$ as the $(\cdot,\cdot)$-projection, i.e., for a given function $f\in \mcal H$, $Q_{\mcal K}f\in \mcal K$ satisfies $(Q_{\mcal K} f, v) = (f, v)$ for all $v\in \mcal K$. Then the operator form of~\eqref{spd} is: Find $u\in \mcal K$ such that
\begin{equation}\label{AK}
A_{\mcal K} u = Q_{\mcal K}f \quad \text{in } \mcal K.
\end{equation}
As it might be difficult to find bases for the subspace $\mcal K$, instead of solving the symmetric positive definite formulation~\eqref{AK}, we shall consider an equivalent saddle point formulation.

Let us introduce the Lagrange multiplier $p\in \mcal P$, equation~\eqref{spd} can be rewritten as the following saddle point system: Find $u\in \mcal V, p\in \mcal P$ such that
\begin{align*}
(Au, v) + (p, Bv) &= ( f, v ) &\text{for all } v\in \mcal V,\\
(Bu, q) \qquad \quad \quad \; & = 0&\text{for all } q\in \mcal P,
\end{align*}
which will be written in the operator form
\begin{equation}\label{ABBO}
\begin{pmatrix}
A & B^T\\
B & O
\end{pmatrix}
\begin{pmatrix}
u\\
p
\end{pmatrix}
=
\begin{pmatrix}
f\\
0
\end{pmatrix}.
\end{equation}
%and the operator matrix in~\eqref{ABBO} will be abbreviate as $\mcal L = (A, B; B^T, O)$. 

Let $\|\cdot\|_V$ and $\|\cdot\|_P$ be two appropriate norms for space $\mcal V$ and $\mcal P$, respectively. It is well known that~\eqref{ABBO} is well posed if and only if the following so-called Brezzi conditions \cite{Brezzi.F;Fortin.M1991} hold:
\begin{enumerate}
\item Continuity of operators $A$ and $B$: there exist constants $c_a, c_b > 0$ such that
$$
(Au, v) \leq c_a\|u\|_{V}\|v\|_{V}, \quad (Bv, q)\leq c_b\|v\|_{V}\|q\|_P, \quad \text{for all } u, v \in \mcal V, q\in \mcal P.
$$
\item Coercivity of $A$ in the kernel space. There exists a constant $\alpha>0$ such that
$$
(Au, u) \geq \alpha \|u\|_{V}^2 \quad \text{for all }u \in \ker(B).
$$

\item Inf-sup condition of $B$. There exists a constant $\beta >0$ such that
$$
\inf_{p\in \mcal P, p\neq 0} \sup_{v \in \mcal V, \tau \neq 0} \frac{(Bv, p)}{\|v\|_{V}\|p\|_{P}} \geq \beta.
$$
\end{enumerate}
%
%
%
%depends on choices of spaces for the Lagrange multiplier. The ideal choice would be $\mcal P=\range(B)$ for which the well-posedness is obvious. In numerical approximation of partial differential equations, bases of $\range(B)$ may not be easy to form and space $\mcal P$ with a locally supported basis is preferable. The difficulty is then transformed to the verification of the so-called inf-sup condition~\cite{Brezzi.F;Fortin.M1991} which might be non trivial, especially when $\mcal V$ and $\mcal P$ are finite dimensional spaces. 
%
Choices of norms $\| \cdot \|_V$ and $\| \cdot \|_P$ are not unique~\cite{Zulehner2011} and $\| \cdot \|_V = \| \cdot \|_A$ may not be always a good choice since $B$ may not be continuous in $\|\cdot \|_A$ norm, c.f. the mixed formulation of Poisson equation in Section \ref{sec:mixPoisson}. Throughout this paper, we will assume the well-posedness of~\eqref{ABBO} and focus on its efficient solvers. 

Problems~\eqref{AK} and~\eqref{ABBO} are equivalent theoretically but will lead to different algorithms. In practice, the saddle point formulation will be easier to solve when bases of $\mcal K$ are not available or expensive to form.
%, i.e., $B: \mcal V \to \mcal P$ is surjective.

%An important example is the Stokes equation for which $\mcal V=H_0^1$, $A = -\Delta$, and $B = -\div$ and $\mcal P = L^2_0$. 
%%The nontrivial part is to verify $\range(B) = \mcal P$ which is equivalent to the inf-sup condition.
%The discrete Stokes equation is: $\mcal V=\mcal \mcal V_h \subset H_0^1$, $A = -\Delta$ and $\mcal P = \mcal P_h\subset L^2_0$. The operator $B: \mcal \mcal V_h \to \mcal P_h$ is $-Q_h\div$ where $Q_h$ is the $L^2$-projection. The space $\mcal \mcal V_h$ and $\mcal P_h$ should be chosen carefully such that ${\rm range}(B) = \mcal P_h$. This is known as discrete inf-sup condition. 

%Given a stable pair $\mbb \mcal V_h \subset H_0^1$ and $\mbb P_h\subset L^2_0$, we consider the minimization problem
%\begin{equation}\label{main-opt}
%\min _{v\in \mbb \mcal V_h\cap \ker (B)} E(v),
%\end{equation}
%which will leads to a saddle point problem~\eqref{ABBO}. We shall design and prove multigrid methods for solving this saddle point system. We drop the script $h$ for the simplicity of notation.

We shall develop and analyze multigrid methods for solving the saddle point system \eqref{ABBO} based on subspace correction methods~\cite{Xu1992} and its adaptation to optimization problems~\cite{Tai2001,Tai.X2003}. Let
$$
\mcal V = \mcal V_1 + \mcal V_2 + \cdots + \mcal V_N, \; \mcal V_i\subset \mcal V, i=1,\ldots, N,
$$
be a space decomposition of $\mcal V$ satisfying the condition 
$$
\mcal K = \mcal K_1 + \mcal K_2 + \cdots + \mcal K_N, \; \mcal K_i = \mcal V_i \cap \ker(B), i=1,\ldots, N.
$$
%be a decomposition of the constraint subspace $\mcal K$. 
For $k\geq 0$ and a given approximated solution $u^k\in \mcal K$, one step of the Successive Subspace Optimization (SSO) method~\cite{Tai2001} is as follows:
\begin{figure}[htbp]
\begin{center}
\includegraphics[width = 4.2in]{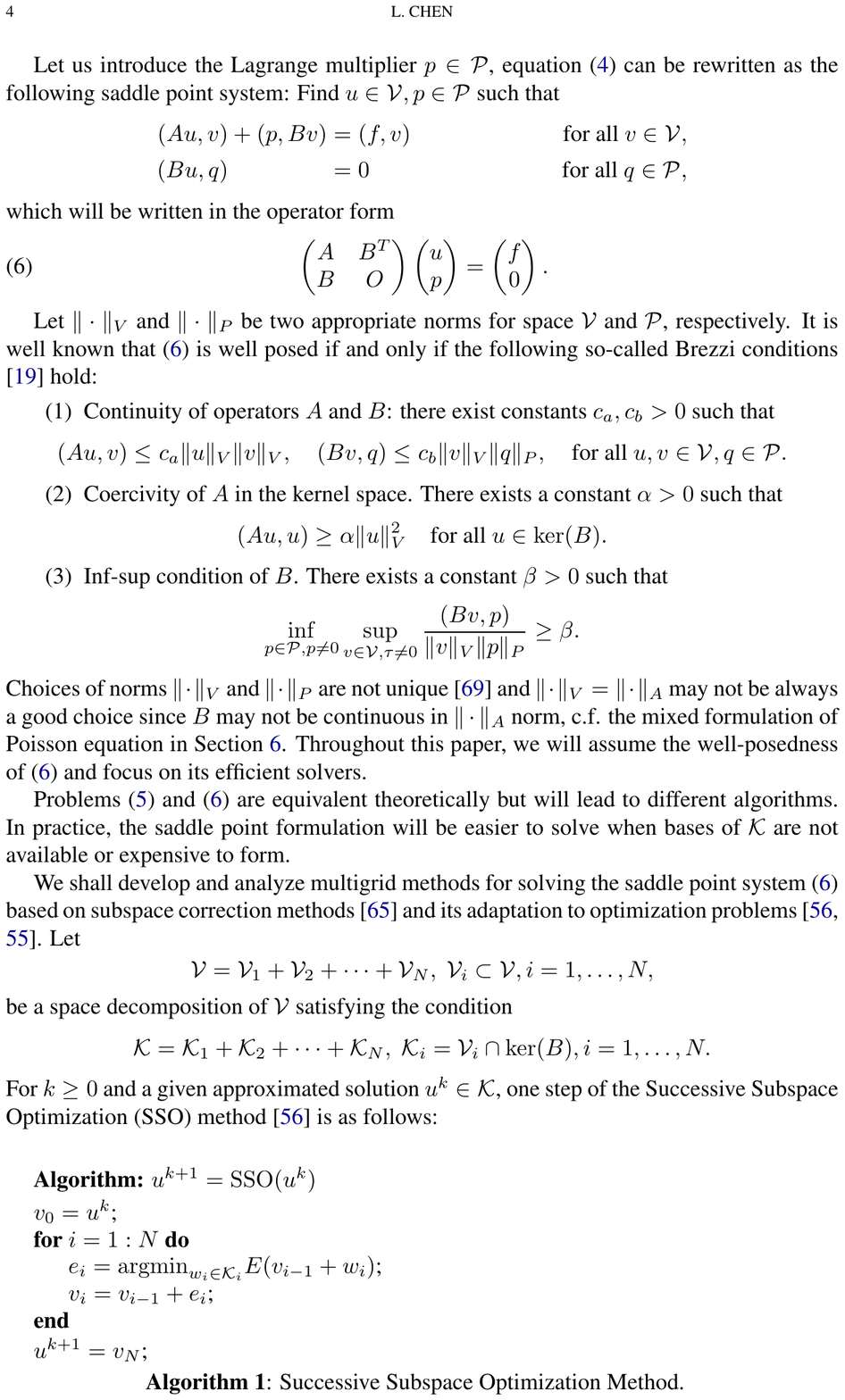}
%\caption{default}
%\label{default}
\end{center}
\end{figure}

%\begin{algorithm}
%\Titleofalgo{ $u^{k+1} = {\rm SSO}(u^k)$}
%$v_0 = u^k$\;
%\For{$i=1:N$}{
%$e_i = {\rm argmin}_{w_i\in \mcal K_i}E(v_{i-1} + w_i)$\;
%$v_i = v_{i-1} + e_i$\;
%}
%$u^{k+1}=v_N$\;
%\medskip
%\caption{Successive Subspace Optimization Method.}
%\end{algorithm}

%It is easy to see if $u^k\in \mcal K$, so is $u^{k+1}$. We will assume the initial guess $u^0\in \mcal K$. A trivial choice would be $u^0 = 0$. 
If we write the Euler equation of the local minimization problem, it reads as
\begin{equation}\label{correction}
(A e_i, \phi_i) = ( f - Av_{i-1}, \phi_i )\quad \text{for all } \phi_i\in \mcal K_i.
\end{equation}
Namely $e_i$ is the solution of the residual equation restrict to $\mcal K_i$. We can thus treat SSO as the subspace correction method for solving \eqref{spd} using the space decomposition $\mcal K = \sum _{i=1}^N\mcal K_i$. We can analyze the convergence from this point of view.

Using the fact $Au = f$ in $\mcal K'$ and $v_i = v_{i-1} + e_i$, equation~\eqref{correction} is also equivalent to the $A$-orthogonality
\begin{equation}\label{orth}
(u - v_i, \phi_i)_A = 0 \quad \text{for all } \phi_i\in \mcal K_i,
\end{equation}
which can be also written as 
\begin{equation}\label{orth2}
(E'(v_i), \phi_i) = 0 \quad \text{for all } \phi_i\in \mcal K_i.
\end{equation}

Let $\mcal P_i = \mcal P\cap B(\mcal V_i)$. Define $A_i: \mcal V_i \to \mcal V_i$ as for $u_i\in \mcal V_i$, $Au_i\in \mcal V_i$ such that $(A_i u_i, v_i) = (Au_i, v_i)$ for all $v_i \in \mcal V_i$, and $B_i: \mcal V_i \to \mcal P_i$ 
as for $u_i\in \mcal V_i$, $Bu_i\in \mcal P_i$ such that $(B_i u_i, q_i) = (Bu_i, q_i)$ for all $q_i \in \mcal P_i$. Let $Q_i: \mcal H \to \mcal V_i$ be the projection in $(\cdot,\cdot)$ inner product. 
The constrained minimization problem in the constraint subspace $\mcal K_i$ will be solved by solving a small saddle point system in $\mcal V_i$:
\begin{equation}\label{localproblem}
\begin{pmatrix}
A_i & B^T_i\\
B_i & O
\end{pmatrix}
\begin{pmatrix}
e_i\\
p_i
\end{pmatrix}
=
\begin{pmatrix}
Q_i(f - Av_{i-1})\\
0
\end{pmatrix}.
\end{equation}
%In the implementation level, the matrix of $A_i$ is a sub-matrix of that of $A$ by choosing the appropriate row and column index. The matrix of $B_i$ is by restricting that of $B$ to the column of $i$ and non-zero rows. 

%Thus the method SSO is essentially a multiplicative Schwarz method based on the decomposition $\mcal K = \sum_{i=1}^N \mcal K_i$. 
% for a suitable multilevel decomposition of $\mcal K$.

A typical multilevel decomposition is given as follows. First we construct a macro-decomposition $\mcal V = \sum_{k=1}^J \mcal V_k$ with nested subspaces $\mcal V_1\subset \mcal V_2 \subset \ldots \subset \mcal V_J = \mcal V$. Usually they are based on a sequence of successively refined meshes. For each subspace $\mcal V_k, k=1, \ldots, J$, we introduce a micro-decomposition $\mcal V_k = \sum_{i=1}^{N_k}\mcal V_{k,i}$ and set $\mcal K_{k,i} = \mcal V_{k,i}\cap \ker(B)$. Note that the assumption $\mcal K = \sum_{k=1}^{J}\sum_{i=1}^{N_k}\mcal K_{k,i}$ requires a careful choice of the micro-decomposition of $\mcal V_k$. Roughly speaking, each subspace $\mcal V_{k,i}$ should be big enough to contain a basis function of $\mcal K$ and each basis function of $\mcal K$ should be contained in at least one $\mcal V_{k,i}$. Similar decomposition is required to design robust multigrid methods for nearly singular system~\cite{Lee.Y;Wu.J;Xu.J;Zikatanov.L2006}. 

\begin{remark}\rm
Solving local saddle problems in $\mcal V_{k,i}$ sequentially in the $k$-th level can be interpret as a multiplicative Schwarz smoother which is better known as the Vanka smoother~\cite{Vanka1986} for Navier-Stokes equations. \qed

%We can interpret SSO as a multiplicative Schwarz method on the space decomposition $\mcal K = \sum_{i=1}^N \mcal K_i$. 
%Let $\Omega_i$ be the support of space $\mcal V_i$. The correction form \eqref{localproblem} requires solving a local problem with a updated residual and homogenous Dirichlet boundary condition on $\partial \Omega_i$. If we move $Av_{i-1}$ to the left-hand side, c.f. \eqref{correction}, we can treat the new update $v_i$ (restricted to $\mcal V_i$) as the solution of the original problem 
%\begin{equation}\label{localproblem2}
%\begin{pmatrix}
%A_i & B^T_i\\
%B_i & O
%\end{pmatrix}
%\begin{pmatrix}
%P_iv_i\\
%p_i
%\end{pmatrix}
%=
%\begin{pmatrix}
%Q_if\\
%0
%\end{pmatrix}.
%\end{equation}
%restricted to $\Omega_i$ and with Dirichlet boundary $v_i|_{\partial \Omega_i} = v_{i-1}|_{\partial \Omega_i}$. Here $P_i$ is the $A$-orthogonal projection to $\mcal V_i$, i.e., $(Av_i, \phi_i) = (A_i P_i v_i, \phi_i)$ for all $\phi_i \in \mcal V_i$. 
%
%
%The Lagrange multiplier $p_i$ does not play a role on exchanging information between local problems since ${\rm range}(B^T)$ is orthogonal to $\ker(B)$.
%
%. Equivalently the residual $Q_i(f - Av_{i-1} - B^Tp_{i-1}) = Q_i(f - Av_{i-1})$ . 
%The correction form \eqref{localproblem} is preferable for inter-grid transfer while the direct updated form \eqref{localproblem2} is preferable in the implementation of smoothers. 

\end{remark}

Due to the nestedness of the macro-decomposition, restriction and prolongation operators are needed only for two consecutive levels. In summary, SSO based on this multilevel decomposition leads to a V-cycle multigrid method for the saddle point problem~\eqref{ABBO} with a multiplicative Schwarz smoother. 

Thanks to the assumption $\mcal K_{k,i}\subset \mcal K$, if $u^k\in \mcal K$, then $u^{k+1} =$ SSO$(u^k)$ is still in $\mcal K$. Namely the iteration remains in the constrained subspace.
Uzawa method~\cite{Uzawa1958}, another popular iterative method for solving the saddle point problem, will not preserve the constraint and thus is not considered here.

We shall use either the unconstrained SPD formulation~\eqref{spd} and~\eqref{correction} or constrained saddle point formulation~\eqref{ABBO} and~\eqref{localproblem}. They are equivalent forms for the convergence analysis but different algorithmically.

We end this section with a discussion of the non-homogenous constraint, i.e., the saddle point problem
\begin{equation}\label{ABBOfg}
\begin{pmatrix}
A & B^T\\
B & O
\end{pmatrix}
\begin{pmatrix}
u\\
p
\end{pmatrix}
=
\begin{pmatrix}
f\\
g
\end{pmatrix}.
\end{equation}
To change to the form \eqref{ABBO}, we can first find a $u_*\in \mcal V$ satisfying $Bu_* = g$ and let $u = u_* + \delta u$. Then the equation for $\delta u$ is in the form \eqref{ABBO}.

There are several ways to find such $u_*$. One way is to solve 
\begin{equation}\label{ABBO0g}
\begin{pmatrix}
I & B^T\\
B & O
\end{pmatrix}
\begin{pmatrix}
u_*\\
p_*
\end{pmatrix}
=
\begin{pmatrix}
0\\
g
\end{pmatrix},
\end{equation}
which is supposed to be easier than solving \eqref{ABBOfg}. For Stokes equations, solving \eqref{ABBO0g} essentially requires a Poisson solver for pressure for which fast solvers are available. For Darcy equations, $A$ is a weighted mass matrix with possibly highly oscillatory coefficients, while \eqref{ABBO0g} is again just a Poisson operator. 

When the space $\mcal P$ consists of discontinuous elements, which is the case of most applications considered in this paper, we can find such $u_*$ by one V-cycle with post-smoothing only; see Section \ref{sec:mixPoisson} for details.

%For mixed finite element methods of elliptic equations, when the Lagrange multiplier space is discontinuous, we can obtain $u_*$ by one iteration of  our multiplicative Schwarz smoother. To describe this process more precisely, let $\mcal T = \{\tau \}$ be a triangulation of the domain $\Omega$. Assume that $\mcal P$ consists of discontinuous functions on $\tau$ and denoted by $\mcal P_{\tau}$ as the restriction of $\mcal P$ on $\tau$ and let $Q_{\tau}$ be the $L^2$-projection to $\mcal P_{\tau}$. Then we have a partition of unity $Bv = \sum_{\tau \in \mcal T} Q_{\tau} Bv$. Given a space decomposition $\mcal V = \sum_{i}\mcal V_i$ which is usually associated to a domain decomposition $\Omega = \cup _{i}\Omega_i$. Recall that the local problem on $\mcal V_i \times \mcal P_i$ can be interpret as a multiplicative Schwarz method with a prescribed boundary condition. So the equation $Q_{\tau}Bv_i = Q_{\tau}g$ holds for each element $\tau \subset \Omega_i$. For $\tau \not \in \Omega_i$, $Q_{\tau}Bv_i$ can be only affected through the value $v_i$ on the boundary $\partial \Omega_i$ which is not changed. Thus a loop over all $\mcal V_i$ will produce a function $u_*$ satisfying the non-homogenous constraint. Note that this approach can be easily build into the smoother and the cost is not an issue.  

\section{Convergence Analysis based on the XZ identity}
In this section, we provide a convergence analysis using the SPD formulation~\eqref{spd} and~\eqref{correction}. The analysis is based on the XZ identity~\cite{Xu.J;Zikatanov.L2002} for the multiplicative iterative methods and can be found in~\cite{Xu.J;Chen.L;Nochetto.R2009}.

Denoted by $P_i$ the $A$-orthogonal projection onto $\mcal K_i$ for $i=1, \ldots, N$. Then the error operator of SSO can be written as $(I - P_N)(I-P_{N-1})\cdots (I-P_1)$, i.e., $u - u^{k+1} = \prod _{i=1}^N (I-P_{i}) (u-u^k)$, where $u^{k+1} = SSO(u^k)$. The following XZ identity was established in~\cite{Xu.J;Zikatanov.L2002}
\begin{equation}\label{XZ}
\Big \|\prod _{i=1}^N (I-P_{i}) \Big \|_A^2 = 1 - \frac{1}{1+c_0},
\end{equation}
where 
$$
c_0 = \sup _{\|v\|_A=1}\inf _{\sum
_{i=1}^Jv_{i}=v, \\
v_i\in \mcal K_i} \sum _{i=1}^N \Big \|P_i\sum _{j=i+1}^Jv_j \Big \|^2_{A}.
$$
For an elementary proof of \eqref{XZ}, we refer to Chen~\cite{Chen.L2009c}.

%We shall adapt the approach developed by Tai~\cite{Tai.X2003} for constraint decomposition methods for nonlinear variational inequalities to our constrained minimization problem. 
%Let $\mcal K = \ker(B)$ be the constrained subset. We assume there exists a decomposition $\mcal K = \sum_{i=1}^N \mcal K_i$ such that we have a stable decomposition and strengthened Cauchy-Schwarz inequality .... Then we consider the local problem in each $\mcal K_i$, i.e., $\min_{v_i\in \mcal K_i}\|u+v_i\|_A$ which is equivalent to solve a local saddle point problem in $\mcal K_i$. 

In order to estimate the constant $c_0$, we propose two important properties of the space decomposition.

\smallskip

\noindent{\bf Stable decomposition (SD)}: for every $v\in \mcal K$, there exists $v_i\in \mcal K_i, i=1, \ldots, N$ such that
$$
v = \sum _{i=1}^N v_i, \quad \text{ and }\quad \sum_{i=1}^N \|v_i\|_A^2 \leq C_A\|v\|_A^2.
$$

\noindent{\bf Strengthened Cauchy Schwarz inequality (SCS)}: for any $u_i\in \mcal K_i$ and $v_j\in \mcal K_j$
$$
\sum_{i=1}^N\sum_{j=i+1}^N (u_i, v_j)_A \leq C_S^{1/2} \left (\sum_{i=1}^N \|u_i\|^2_A\right )^{1/2}\left (\sum_{j=1}^N \|v_j\|^2_A\right )^{1/2}.
$$
With assumptions (SD) and (SCS), we shall provide a upper bound of $c_0$ and thus obtain a convergence proof of SSO method for solving the saddle point problem~\eqref{ABBO}. 
\begin{theorem}\label{th:exactrate}
Assume that  the space decomposition $\mcal K = \sum_{i=1}^N \mcal K_i$ satisfy assumptions (SD) and (SCS). For SSO method, we have
$$
\Big \|\prod _{i=1}^N (I-P_{i})\Big \|^2_A \leq 1-\frac{1}{1+C_AC_S}. 
$$
\end{theorem}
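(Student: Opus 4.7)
The plan is to apply the XZ identity \eqref{XZ} and reduce the theorem to showing $c_0 \le C_A C_S$. Since the identity already supplies the exact rate in terms of $c_0$, the only real work is to control the quantity
$$
c_0 = \sup _{\|v\|_A=1}\inf _{\sum _{i=1}^N v_i = v,\, v_i\in \mcal K_i} \sum _{i=1}^N \Big \|P_i\sum _{j=i+1}^N v_j \Big \|^2_{A}
$$
by exploiting the freedom in choosing the decomposition $v = \sum v_i$.

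First I would fix an arbitrary $v\in \mcal K$ with $\|v\|_A = 1$ and invoke (SD) to pick a decomposition $v = \sum_{i=1}^N v_i$, $v_i \in \mcal K_i$, satisfying $\sum_i \|v_i\|_A^2 \le C_A \|v\|_A^2 = C_A$. This particular decomposition will serve as an admissible candidate in the infimum defining $c_0$, which is enough for an upper bound. Set $w_i := P_i \sum_{j=i+1}^N v_j \in \mcal K_i$.

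Next I would rewrite $\sum_i \|w_i\|_A^2$ in a form suitable for applying (SCS). Because $P_i$ is the $A$-orthogonal projection onto $\mcal K_i$ and $w_i \in \mcal K_i$,
$$
\|w_i\|_A^2 = (P_i \textstyle\sum_{j=i+1}^N v_j,\, w_i)_A = \sum_{j=i+1}^N (v_j, w_i)_A.
$$
Summing over $i$ and applying (SCS) to the pairs $(w_i, v_j)$ with $i < j$,
$$
\sum_{i=1}^N \|w_i\|_A^2 \;=\; \sum_{i=1}^N \sum_{j=i+1}^N (w_i, v_j)_A \;\le\; C_S^{1/2} \Bigl(\sum_i \|w_i\|_A^2\Bigr)^{1/2} \Bigl(\sum_j \|v_j\|_A^2\Bigr)^{1/2}.
$$
Cancelling one power of $(\sum_i \|w_i\|_A^2)^{1/2}$ and squaring yields $\sum_i \|w_i\|_A^2 \le C_S \sum_j \|v_j\|_A^2 \le C_A C_S$. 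Taking the supremum over $\|v\|_A = 1$ gives $c_0 \le C_A C_S$, and inserting this into the XZ identity \eqref{XZ} produces the stated contraction rate.

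The only genuine subtlety is the self-bounding step at the end, where $\sum_i \|w_i\|_A^2$ appears on both sides of the (SCS) inequality; one must ensure the quantity is finite so that the cancellation is legitimate, which is automatic since the $w_i$ are projections of a fixed finite sum. Beyond that, the argument is a straightforward combination of the three named ingredients (XZ, SD, SCS), so I would not expect any further obstacle.
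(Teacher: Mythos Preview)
Your proposal is correct and essentially identical to the paper's own proof: both set $w_i = P_i\sum_{j>i} v_j$ (the paper calls it $u_i$), use the self-adjointness of $P_i$ to strip the projection and rewrite $\sum_i\|w_i\|_A^2$ as the double sum $\sum_i\sum_{j>i}(w_i,v_j)_A$, apply (SCS) to obtain the self-bounding inequality, cancel, and then invoke (SD) and the XZ identity. The only cosmetic difference is that the paper first derives the generic inequality $\sum_i\|u_i\|_A^2\le C_S\sum_i\|v_i\|_A^2$ and afterwards specializes the $v_i$ to a stable decomposition, whereas you fix the stable decomposition from the outset.
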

\begin{proof}
We apply (SCS) with $u_i = P_i \sum_{j=i+1}^N v_j$ to obtain
  \begin{align*}
\sum _{i=1}^N \|u_i\|_A^2 &= \sum _{i=1}^N (u_i,P_i \sum
  _{j=i+1}^N v_j)_A = \sum _{i=1}^N \sum _{j=i+1}^N (u_i, v_j)_A\\  
&\leq C_S^{1/2}\left (\sum _{i=1}^N \|u_i\|_A^2\right)^{1/2}
\left (\sum _{i=1}^N \|v_i\|_{A}^2 \right)^{1/2},
  \end{align*}
which leads to the inequality 
\begin{equation}
\sum _{i=1}^N \|u_i\|_A^2 \leq C_S\sum _{i=1}^N \|v_i\|_{A}^2.
\end{equation}

Consequently, we choose $v=\sum _{i=1}^N v_i$ as a stable decomposition satisfying (SD) to get
$$
\sum _{i=1}^N \Big \|P_{i} \sum _{j=i+1}^N v_{j}\Big \|_{A}^2=\sum
_{i=1}^N \|u_i\|_A^2\leq C_S\sum _{i=1}^N \|v_i\|_{A}^2 \leq
C_SC_A\|v\|_A^2,
$$
which implies $c_0\leq C_SC_A$. The desired result then follows from the X-Z identity~\eqref{XZ}.
\end{proof}

%For the multilevel macro-micro decomposition $\mcal K = \sum_{k=1}^{J}\sum_{i=1}^{N_k}\mcal K_{k,i}$, the verification of assumptions can be further split into 

The assumption (SCS) is relatively easy to verify. The key is to construct a stable decomposition of the constraint space $\mcal K$. 

\section{Convergence Analysis based on Constrained Optimization}
In this section we provide an alternative proof using the constraint optimization approach established by Tai~\cite{Tai.X2003}. It also provides a better approach to extend the convergence proof to inexact and/or nonlinear local solvers. 

We will always denote by $u$ the global minimizer of \eqref{main-opt}. Given an initial guess $u^0\in \mcal K$,  let $u^k$ be the $k$th iteration in SSO algorithm for $k=1,2,\cdots$. We aim to prove a linear reduction of the energy difference
\begin{equation}\label{linearreduction}
E(u^{k+1}) - E(u) \leq \rho \left [ E(u^{k}) - E(u) \right ],
\end{equation}
with a contraction factor $\rho \in (0,1)$. Ideally $\rho$ is independent of the size of the problem. The proof is developed in~\cite{Tai2001,Tai.X2003} for a nonlinear and convex energy but simplified here for the quadratic energy.

We first explore the relation between the energy and the $A$-norm of the error. 
\begin{lemma}\label{lm:energyandnorm}
For any $w, v\in \mcal V$, we have
\begin{equation}\label{Ewv}
E(w)-E(v) = \frac{1}{2}\|w - v\|_A^2 + ( E'(v), w-v ).
\end{equation}
Consequently for the minimizer $u\in \mcal K$ and any $w\in \mcal K$, 
\begin{equation}\label{Ewu}
E(w)-E(u) = \frac{1}{2}\|w - u\|_A^2.
\end{equation}
\end{lemma}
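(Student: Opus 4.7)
The plan is to treat both identities as a direct consequence of the fact that $E$ is a quadratic functional whose Gateaux derivative is explicitly $E'(v)=Av-f$, after which the symmetry of $A$ (which makes $(\cdot,\cdot)_A$ a genuine inner product) gives the result via the polarization identity.

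For the first identity, I would first record that differentiating $E(v)=\tfrac12(Av,v)-(f,v)$ yields $E'(v)=Av-f$ (viewed as an element of $\mcal H$ via the Riesz identification already fixed in Section 2). I would then expand
$$
\tfrac12\|w\|_A^2-\tfrac12\|v\|_A^2 \;=\; \tfrac12\bigl(A(w-v),w-v\bigr)+(Av,w-v),
$$
which is just the polarization identity for the symmetric form $(\cdot,\cdot)_A$ applied to $w=(w-v)+v$. Subtracting $(f,w-v)$ from both sides and regrouping $(Av-f,w-v)=(E'(v),w-v)$ produces \eqref{Ewv}.

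For the second identity, I would specialize \eqref{Ewv} with $v=u$ (the minimizer) and an arbitrary $w\in\mcal K$. Because $\mcal K$ is a linear subspace, $w-u\in\mcal K$, and the optimality characterization \eqref{spd} (equivalently \eqref{orth2}) gives $(E'(u),w-u)=(Au-f,w-u)=0$. The cross term in \eqref{Ewv} therefore vanishes and \eqref{Ewu} follows.

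There is no real obstacle here; the only mild care needed is to make sure the symmetry of $A$ (and hence of the bilinear form $(\cdot,\cdot)_A$) is invoked when separating the quadratic and linear parts, and to cite the correct subspace $\mcal K$ membership of $w-u$ so that the first-order condition \eqref{spd} applies to this particular test direction.
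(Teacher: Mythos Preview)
Your proposal is correct and is precisely the ``straightforward verification'' the paper alludes to; the paper's own proof contains no additional ideas beyond asserting that \eqref{Ewv} and \eqref{Ewu} can be checked directly.
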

\begin{proof}
Verification of~\eqref{Ewv} and~\eqref{Ewu} is straightforward.
\end{proof}
Based on the identity~\eqref{Ewu}, the target inequality~\eqref{linearreduction} becomes a more familiar one 
\begin{equation}
\|u^{k+1} - u\|_A \leq \rho^{1/2}\|u^k - u\|_A. 
\end{equation}

Let $d_k = E(u^k) - E(u)$ and $\delta _k = E(u^k) - E(u^{k+1}).$ The quantity $d_k$ is the distance of the current energy to the lowest one, $\delta_k$ is the amount of the energy decreased in one iteration, and they are connected by the identity $\delta_k = d_k - d_{k+1}$. By Lemma \ref{lm:energyandnorm}, we have $d_k = \frac{1}{2}\|u^k - u\|_A^2$ but in general $\delta_k \neq \frac{1}{2}\|u^k - u^{k+1}\|_A^2$ since $u^{k+1}$ may not be the minimizer. For each $v_i, i=1,\ldots,N,$ in SSO, we do have  $$E(v_{i-1}) - E(v_i) = \frac{1}{2}\|v_{i-1} - v_i\|_A^2,$$ since $v_i$ is the local minimizer and $v_{i-1} - v_i = - e_i\in \mcal K_i$; see also the orthogonality~\eqref{orth2}. 
Borrowing the terminology of the convergence theory of adaptive finite element methods~\cite{Nochetto.R;Siebert.K;Veeser.A2009}, we shall present our proof based on the following two inequalities. 

\medskip
\noindent{\bf Discrete Lower Bound.} There exists a positive constant $C_L$ such that for $k=0,1,2, \ldots$
$$
\delta _k \geq C_L \sum _{i=1}^N \|e_i\|^2_A.
$$

\noindent{\bf Upper Bound.} There exists a positive constant $C_U$ such that for $k=0,1,2, \ldots$
$$
d_{k+1} \leq C_U \sum _{i=1}^N \|e_i\|^2_A.
$$

\begin{theorem}
Assume that  the discrete lower bound and upper bound hold with constants $C_L$ and $C_U$ respectively. We then have
$$
d_{k+1}\leq \frac{c_0}{1+c_0}d_k,
$$
where $c_0=C_U/C_L$.
\end{theorem}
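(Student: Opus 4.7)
The plan is to combine the two bounds directly, with $\delta_k = d_k - d_{k+1}$ as the bridge. First I would rewrite the discrete lower bound in terms of $d_k - d_{k+1}$, which gives
\[
d_k - d_{k+1} = \delta_k \geq C_L \sum_{i=1}^N \|e_i\|_A^2.
\]
Then I would invoke the upper bound, rearranged as $\sum_{i=1}^N \|e_i\|_A^2 \geq d_{k+1}/C_U$, to eliminate the sum on the right-hand side and obtain
\[
d_k - d_{k+1} \geq \frac{C_L}{C_U}\, d_{k+1} = \frac{1}{c_0}\, d_{k+1}.
\]

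From here it is just algebra: moving $d_{k+1}$ to the left gives $d_k \geq (1 + 1/c_0)\, d_{k+1} = \frac{1+c_0}{c_0}\, d_{k+1}$, which is equivalent to $d_{k+1} \leq \frac{c_0}{1+c_0}\, d_k$, the desired contraction.

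There is really no hard step here, since the theorem is stated as an abstract consequence of the two named hypotheses; the work has been pushed into proving the discrete lower bound and the upper bound, which will require the stable decomposition (SD) and strengthened Cauchy--Schwarz (SCS) assumptions of the previous section. The only minor thing to mention is that both bounds are stated for the same sum $\sum_{i=1}^N \|e_i\|_A^2$ of the local corrections produced by SSO at iteration $k$, so no reindexing is needed; the two constants $C_L$ and $C_U$ combine cleanly into the single ratio $c_0 = C_U/C_L$. One may also note as a sanity check that $c_0 \geq 1$ necessarily holds (since $d_{k+1} \leq d_k$ by the monotone energy decrease built into SSO, any valid pair $(C_L, C_U)$ must satisfy $C_U \geq C_L$ at least asymptotically), so the contraction factor $c_0/(1+c_0) \in [1/2, 1)$ is a genuine number less than $1$.
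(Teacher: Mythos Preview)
Your argument is correct and is essentially identical to the paper's: the paper writes the single chain $d_{k+1}\leq C_U\sum_i\|e_i\|_A^2\leq (C_U/C_L)\,\delta_k=c_0(d_k-d_{k+1})$ and rearranges, which is exactly what you do in two steps. One small correction to your closing remark: nothing in the hypotheses forces $c_0\geq 1$ (for any $c_0>0$ the factor $c_0/(1+c_0)$ lies in $(0,1)$, and in the paper's concrete setting $c_0=C_AC_S$ need not exceed $1$), so the interval $[1/2,1)$ should simply be $(0,1)$.
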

\begin{proof}
%We have the trivial orthogonality:
%$$\delta _k + d_{k+1}= d_k.$$
The proof is straightforward by assumptions and rearrangement of the following inequality
$$
d_{k+1} \leq C_U \sum _{i=1}^N \|e_i\|^2_A \leq C_U/C_L  \delta _k = c_0( d_k  - d_{k+1}).
$$
\end{proof}

Verifying the lower bound is relatively easy since $E$ is convex. Indeed we have the following identity which characterizes exactly the amount of energy decreased in one step of SSO. Again in the sequel, $u^{k+1}={\rm SSO}(u^k)$ and $e_i$ is the $i$th correction in $\mcal K_i$, for $i=1,\ldots,N$.
\begin{theorem}
$$
 E(u^k) - E(u^{k+1}) = \frac{1}{2} \sum _{i=1}^N \|e_i\|^2_A.
$$
\end{theorem}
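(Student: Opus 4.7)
The plan is to telescope the energy difference across the $N$ intermediate iterates $v_0 = u^k, v_1, \ldots, v_N = u^{k+1}$ produced inside one SSO sweep, and to evaluate each successive energy drop $E(v_{i-1}) - E(v_i)$ in closed form by exploiting the fact that $v_i$ is the exact minimizer over the affine line $v_{i-1} + \mcal K_i$.

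First I would apply the two-point identity \eqref{Ewv} from Lemma \ref{lm:energyandnorm} with $w = v_{i-1}$ and $v = v_i$, which gives
\begin{equation*}
E(v_{i-1}) - E(v_i) = \tfrac{1}{2}\|v_{i-1} - v_i\|_A^2 + (E'(v_i), v_{i-1} - v_i).
\end{equation*}
Since $v_{i-1} - v_i = -e_i$ and $e_i \in \mcal K_i$ by construction, the orthogonality \eqref{orth2}, $(E'(v_i), \phi_i) = 0$ for all $\phi_i \in \mcal K_i$, makes the linear term vanish. This yields the clean one-step identity
\begin{equation*}
E(v_{i-1}) - E(v_i) = \tfrac{1}{2}\|e_i\|_A^2,
\end{equation*}
which was already observed in the paragraph preceding the theorem statement.

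Finally I would telescope: summing from $i=1$ to $i=N$ and using $v_0 = u^k$, $v_N = u^{k+1}$,
\begin{equation*}
E(u^k) - E(u^{k+1}) = \sum_{i=1}^N \bigl[E(v_{i-1}) - E(v_i)\bigr] = \tfrac{1}{2}\sum_{i=1}^N \|e_i\|_A^2,
\end{equation*}
which is the desired identity. There is no real obstacle here: the statement is essentially a bookkeeping consequence of the quadratic structure of $E$ together with the Euler equation for each local minimizer. The only subtlety worth noting is that the orthogonality holds only against test functions in $\mcal K_i$, but this is exactly the subspace in which $e_i$ lives, so the cross term vanishes as needed.
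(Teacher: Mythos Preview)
Your proof is correct and follows essentially the same approach as the paper: apply the two-point identity \eqref{Ewv} together with the local orthogonality \eqref{orth2} to obtain $E(v_{i-1}) - E(v_i) = \tfrac{1}{2}\|e_i\|_A^2$, then telescope over $i=1,\ldots,N$.
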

\begin{proof}
By the identity \eqref{Ewv} and the orthogonality \eqref{orth2}, we have, for $i=1,\ldots, N$,
$$
E(v_{i-1}) - E(v_i) = \frac{1}{2}\|v_{i-1}-v_i\|^2_A = \frac{1}{2}\|e_i\|^2_A,
$$
and consequently
$$
 E(u^k) - E(u^{k+1}) = \sum _{i=1}^N\left [  E(v_{i-1}) - E(v_i)\right ] =  \frac{1}{2} \sum _{i=1}^N \|e_i\|^2_A.
$$
\end{proof}

Proving the upper bound is more delicate. We first present a lemma which can be verified directly by definition and Lemma \ref{lm:energyandnorm}.
\begin{lemma}\label{lm:E'u}
\begin{equation}\label{eq:E'u}
(E'(u^{k+1}) - E'(u), u^{k+1} - u) = \|u^{k+1}-u\|_A^2 = 2\left [E(u^{k+1}) - E(u)\right ].
\end{equation}
\end{lemma}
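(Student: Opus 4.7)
The statement is essentially a direct computation once one unpacks the definition of $E$ and the Riesz identification. My plan is to split the double equality into two independent checks and handle each separately.

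For the first equality, I would compute the derivative of the quadratic functional $E(v) = \tfrac{1}{2}\|v\|_A^2 - (f,v)$ directly. Because $A$ is symmetric and positive definite and we identify elements of $\mathcal{H}'$ with elements of $\mathcal{H}$ through the Riesz map induced by $(\cdot,\cdot)$, one obtains $E'(v) = Av - f$. Hence
\[
E'(u^{k+1}) - E'(u) = A(u^{k+1} - u),
\]
and pairing both sides with $u^{k+1} - u$ in the $(\cdot,\cdot)$ inner product immediately gives
\[
(E'(u^{k+1}) - E'(u),\, u^{k+1} - u) = (A(u^{k+1}-u),\, u^{k+1}-u) = \|u^{k+1} - u\|_A^2.
\]

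For the second equality, the key observation is that SSO preserves the constraint: because each local correction $e_i\in \mathcal{K}_i\subset \mathcal{K}$, starting from $u^k\in\mathcal{K}$ we have $u^{k+1}\in\mathcal{K}$ (this was recorded right after the remark on the multilevel decomposition). Since $u$ is the minimizer of $E$ on $\mathcal{K}$ and $u^{k+1}\in\mathcal{K}$, I can apply identity \eqref{Ewu} from Lemma~\ref{lm:energyandnorm} with $w = u^{k+1}$ to conclude
\[
E(u^{k+1}) - E(u) = \tfrac{1}{2}\|u^{k+1} - u\|_A^2,
\]
which after multiplying by $2$ matches the right-hand side of \eqref{eq:E'u}.

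There is no real obstacle here; the only subtlety worth flagging is remembering that $u^{k+1}\in\mathcal{K}$ is needed to invoke \eqref{Ewu} rather than the more general \eqref{Ewv}. In fact the first equality holds for any pair of elements in $\mathcal{V}$, while the second relies specifically on the admissibility of the iterate together with the optimality of $u$.
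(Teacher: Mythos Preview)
Your proof is correct and follows exactly the approach the paper indicates: the lemma is ``verified directly by definition and Lemma~\ref{lm:energyandnorm},'' which is precisely your computation of $E'(v)=Av-f$ for the first equality and your invocation of \eqref{Ewu} (together with $u^{k+1}\in\mcal K$) for the second.
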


We then give a multilevel decomposition of the left-hand side of \eqref{eq:E'u}.
\begin{lemma}\label{lm:doublesum}
For any decomposition $u^{k+1}-u = \sum _{i=1}^N w_i, w_i\in \mcal K_i, i=1, 2, \ldots, N$,
$$
(E'(u^{k+1}) - E'(u), u^{k+1} - u) = \sum _{i=1}^N\sum _{j>i}^N (e_j, w_i )_A.%(E'(v_j) - E'(v_{j-1}), w_i ).
$$
\end{lemma}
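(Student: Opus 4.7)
The plan is to rewrite both sides in terms of the $A$-inner product and exploit the Galerkin-type orthogonality \eqref{orth} that each SSO iterate $v_i$ satisfies. First I would use the fact that $E'(v)=Av-f$ to collapse the left-hand side: since $E'(u^{k+1})-E'(u)=A(u^{k+1}-u)$, we immediately obtain
\[
(E'(u^{k+1}) - E'(u),\; u^{k+1}-u) \;=\; (u^{k+1}-u,\; u^{k+1}-u)_A.
\]
This is consistent with Lemma~\ref{lm:E'u} but I will not need the identification with $2d_{k+1}$ in the argument.

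Next I would insert the given decomposition $u^{k+1}-u=\sum_{i=1}^N w_i$ into only the \emph{second} slot, yielding
\[
(u^{k+1}-u,\;u^{k+1}-u)_A \;=\; \sum_{i=1}^N (u^{k+1}-u,\; w_i)_A.
\]
Then, for each fixed $i$, I split $u^{k+1}-u = (u^{k+1}-v_i) + (v_i-u)$. The second piece is killed by testing against $w_i\in\mcal K_i$, because \eqref{orth} (equivalently \eqref{orth2}) gives $(u-v_i,\phi_i)_A=0$ for every $\phi_i\in\mcal K_i$. The first piece telescopes: from the SSO recursion $v_j=v_{j-1}+e_j$ and $u^{k+1}=v_N$ we have $u^{k+1}-v_i=\sum_{j=i+1}^N e_j$, hence
\[
(u^{k+1}-u,\;w_i)_A \;=\; \sum_{j>i}^N (e_j,\;w_i)_A.
\]
Summing over $i$ produces the claimed double sum.

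There is no real obstacle here; the argument is a one-line Galerkin orthogonality plus a telescoping identity. The only point that requires a little care is to split around $v_i$ (and not, say, $v_{i-1}$), so that the orthogonality eliminates the contribution $(v_i-u,w_i)_A$ and leaves precisely the tail $\sum_{j>i} e_j$ of increments, matching the index range on the right-hand side of the lemma.
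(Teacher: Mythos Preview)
Your argument is correct and essentially identical to the paper's: both drop the $E'(u)$ term (you via $E'(u^{k+1})-E'(u)=A(u^{k+1}-u)$, the paper via $E'(u)=0$ on $\mcal K$), insert the decomposition $\sum_i w_i$, split around $v_i$ using the local orthogonality $(u-v_i,w_i)_A=0$, and telescope $u^{k+1}-v_i=\sum_{j>i}e_j$. The only cosmetic difference is that the paper keeps the $E'$ notation one step longer before passing to the $A$-inner product.
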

\begin{proof}
\begin{align*}
 &(  E'(u^{k+1}) - E'(u), u^{k+1} - u )  = (  E'(u^{k+1}) , u^{k+1} - u )  \\
&= \sum _{i=1}^N(  E'(u^{k+1}) - E'(v_i), w_i )  = \sum _{i=1}^N\sum _{j>i}^N (  E'(v_j) - E'(v_{j-1}), w_i ) = \sum _{i=1}^N\sum _{j>i}^N ( e_j, w_i )_A.
\end{align*}
In the first step, we use the fact $E'(u) = 0$ in $\mcal K'$ since $u$ is the minimizer and $u^{k+1}-u \in \mcal K$. In the second step we use $E'(v_i) = 0$ in $\mcal K_i'$ since $v_i$ is the minimizer in $\mcal K_i$ and $w_i\in \mcal K_i$; see also~\eqref{orth}. 
%The third step is a trivial decomposition.
\end{proof}

\begin{lemma}\label{lm:upper}
Assume that  the space decomposition satisfies assumptions (SD) and (SCS). Then we have the upper bound
$$
E(u^{k+1}) - E(u) \leq \frac{1}{2}C_SC_A\sum _{i=1}^N\|e_i\|^2_A.
$$
\end{lemma}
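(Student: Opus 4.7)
The plan is to combine the two preceding lemmas (Lemma~\ref{lm:E'u} and Lemma~\ref{lm:doublesum}) with a carefully chosen decomposition of the error $u^{k+1}-u$ supplied by (SD), and then close the estimate with (SCS).

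First I would note that since both $u$ and $u^{k+1}$ lie in $\mcal K$, the error $u^{k+1}-u$ is itself in $\mcal K$. Applying (SD) to this element produces a decomposition $u^{k+1}-u = \sum_{i=1}^N w_i$ with $w_i\in \mcal K_i$ and
$$
\sum_{i=1}^N \|w_i\|_A^2 \;\leq\; C_A\,\|u^{k+1}-u\|_A^2 \;=\; 2C_A\bigl[E(u^{k+1})-E(u)\bigr],
$$
where the last equality uses \eqref{Ewu} from Lemma~\ref{lm:energyandnorm}.

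Next I would feed this decomposition into Lemma~\ref{lm:doublesum} and use Lemma~\ref{lm:E'u} to identify the left-hand side with $2[E(u^{k+1})-E(u)]$:
$$
2\bigl[E(u^{k+1})-E(u)\bigr] \;=\; \sum_{i=1}^N\sum_{j>i}^N (e_j, w_i)_A.
$$
The strengthened Cauchy--Schwarz inequality (SCS) then yields
$$
2\bigl[E(u^{k+1})-E(u)\bigr] \;\leq\; C_S^{1/2}\Bigl(\sum_{j=1}^N \|e_j\|_A^2\Bigr)^{1/2}\Bigl(\sum_{i=1}^N \|w_i\|_A^2\Bigr)^{1/2}.
$$

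Finally I would substitute the stability bound on $\sum\|w_i\|_A^2$ into this inequality, denote $X = E(u^{k+1})-E(u)$ and $Y = \sum\|e_i\|_A^2$, so that $2X \leq C_S^{1/2}\, Y^{1/2}\,(2C_A X)^{1/2}$; squaring and canceling one factor of $X$ gives $X \leq \tfrac{1}{2}C_SC_A\,Y$, which is the claimed bound. The only mildly subtle point is the circular-looking appearance of $E(u^{k+1})-E(u)$ on both sides, which is resolved cleanly by the square-then-cancel step; no further technical difficulty is expected, since the heavy lifting (identity \eqref{Ewu}, the telescoping in Lemma~\ref{lm:doublesum}, and the two structural assumptions) is already available.
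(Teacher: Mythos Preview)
Your proof is correct and follows essentially the same route as the paper: apply (SD) to $u^{k+1}-u$, use Lemma~\ref{lm:doublesum} together with Lemma~\ref{lm:E'u}, bound the double sum by (SCS), and then cancel the common factor. The only cosmetic difference is that the paper carries out the cancellation in terms of $\|u^{k+1}-u\|_A$ before converting to the energy difference, whereas you work directly with $X=E(u^{k+1})-E(u)$ and square; the two are equivalent via \eqref{Ewu}.
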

\begin{proof}
We shall chose a stable decomposition for $u^{k+1} - u = \sum _{i=1}^N w_i, w_i\in \mcal K_i, i=1, 2, \ldots, N$. By Lemma \ref{lm:doublesum} and (SCS), we have
\begin{align*}
(  E'(u^{k+1}) - E'(u), u^{k+1} - u )  &= \sum _{i=1}^N\sum _{j>i}^N (  e_j, w_i )_A \\
&\leq C_S^{1/2}\left (\sum_{j=1}^N \|e_j\|^2_A\right )^{1/2}\left (\sum_{i=1}^N \|w_i\|^2_A\right )^{1/2}\\
&\leq (C_SC_A)^{1/2}\left (\sum_{i=1}^N \|e_i\|^2_A\right )^{1/2}\|u^{k+1} - u\|_A.
\end{align*}
Substituting the identity (see Lemma \ref{lm:E'u})
$$
\|u^{k+1} - u\|_A^2 = (  E'(u^{k+1}) - E'(u), u^{k+1} - u )
$$
into the above inequality and canceling one $\|u^{k+1} - u\|_A$, we can obtain 
$$
\|u^{k+1} - u\|_A^2 \leq C_SC_A\sum_{i=1}^N\|e_i\|_A^2.
$$
Using the identity $E(u^{k+1}) - E(u) = \|u^{k+1} - u\|_A^2/2$, we  obtain the desired result.
\end{proof}

We summarize our convergence result into the following theorem.
\begin{theorem}
Assume that  the space decomposition $\mcal K = \sum_{i=1}^N \mcal K_i$ satisfies assumptions (SD) and (SCS). Then
$$
E(u^{k+1}) - E(u)\leq \left (1-\frac{1}{1+C_AC_S}\right) \left [E(u^k) - E(u)\right ]. 
$$
\end{theorem}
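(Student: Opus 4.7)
The plan is to apply the abstract convergence theorem (with constants $C_L$ and $C_U$) by reading off the two ingredients already established in the preceding two lemmas. Setting $d_k = E(u^k) - E(u)$ and $\delta_k = E(u^k) - E(u^{k+1}) = d_k - d_{k+1}$, the goal reduces to proving $d_{k+1} \le C_A C_S (d_k - d_{k+1})$, since this rearranges immediately to $d_{k+1} \le \bigl(1 - (1+C_AC_S)^{-1}\bigr) d_k$.

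First I would invoke the identity
$$
\delta_k = E(u^k) - E(u^{k+1}) = \tfrac12 \sum_{i=1}^N \|e_i\|_A^2,
$$
which is exactly the discrete lower bound (with $C_L = 1/2$) established by the previous theorem from the local minimization property $E(v_{i-1})-E(v_i)=\tfrac12\|e_i\|_A^2$ and telescoping.

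Next I would apply Lemma~\ref{lm:upper}, which under (SD) and (SCS) gives the upper bound
$$
d_{k+1} = E(u^{k+1}) - E(u) \le \tfrac12 C_A C_S \sum_{i=1}^N \|e_i\|_A^2,
$$
i.e., $C_U = C_A C_S/2$. Combining the two bounds yields $d_{k+1} \le C_A C_S \,\delta_k = C_A C_S (d_k - d_{k+1})$, and solving this one-line algebraic inequality for $d_{k+1}$ produces the claimed contraction factor $1 - (1+C_AC_S)^{-1}$.

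There is essentially no obstacle at this stage: all the analytical work — the quadratic identity of Lemma~\ref{lm:energyandnorm}, the multilevel decomposition in Lemma~\ref{lm:doublesum}, and the Cauchy–Schwarz step using (SCS) followed by (SD) to control $\sum_i\|w_i\|_A^2$ by $\|u^{k+1}-u\|_A^2$ — has already been absorbed into Lemma~\ref{lm:upper}. The only care needed is bookkeeping: making sure that the same sequence $\{e_i\}_{i=1}^N$ from one SSO sweep appears on both sides so that $\delta_k$ and $d_{k+1}$ can actually be compared term by term. With that noted, the proof is a one-line deduction from the two bounds plus the abstract contraction argument.
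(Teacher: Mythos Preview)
Your proposal is correct and follows exactly the paper's approach: the theorem is simply the combination of the discrete lower bound $\delta_k = \tfrac12\sum_i\|e_i\|_A^2$ (giving $C_L=1/2$) with the upper bound of Lemma~\ref{lm:upper} (giving $C_U=\tfrac12 C_AC_S$), plugged into the abstract contraction result with $c_0=C_U/C_L=C_AC_S$. The paper does not even spell out a separate proof for this theorem, since it is the immediate consequence of the preceding results that you identified.
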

\begin{remark}\rm
The estimate is consistent with the one obtained by the XZ identity which indicates that our energy estimate is sharp. 
\end{remark}

\section{Convergence Analysis with Inexact Local Solvers}
In the algorithm SSO, we assume that the local problem is solved exactly which may be costly when the dimension of the local space is large. In this section, we consider inexact solvers using one gradient iteration and establish the corresponding convergence proof. Note that XZ identity cannot be applied to the nonlinear solvers considered here.

Recall that the local constrained minimization problem is: let $r_i = Q_i(f - Av_{i-1})$, find $e_i^*\in \mcal K_i$  such that
% the following equation holds in $\mcal K_i'$
\begin{equation}\label{localstokes}
\begin{pmatrix}
A_i & B_i^T\\
B_i & O
\end{pmatrix}
\begin{pmatrix}
e_i^*\\
p_i^*
\end{pmatrix}
=
\begin{pmatrix}
r_i\\
0
\end{pmatrix}.
\end{equation}
Here we use $e_i^*, p_i^*$ to denote the solution obtained by the exact solver. In the inexact solver proposed below, the constraint is still satisfied but operator $A_i$ is replaced by a simpler one $D_i$, e.g., the diagonal of $A_i$. In general, let $D_i$ be an SPD operator on $\mcal V_i$, we first solve the local problem
\begin{equation}\label{localapproximatedstokes}
\begin{pmatrix}
D_i & B_i^T\\
B_i & O
\end{pmatrix}
\begin{pmatrix}
s_i\\
p_i
\end{pmatrix}
=
\begin{pmatrix}
r_i\\
0
\end{pmatrix}.
\end{equation}
Then we apply the line search along the direction $s_i$ to find an optimal scaling:
\begin{equation}\label{linesearch}
\min _{\alpha \in \mbb R}E(v_{i-1} - \alpha s_i),
\end{equation}
whose solution is 
\begin{equation}\label{alpha}
\alpha = \frac{(r_i,s_i)}{(As_i, s_i)}.
\end{equation}
We update $$v_i = v_{i-1} - \alpha s_i.$$ This is one step of a preconditioned gradient method and $D_i$ is a preconditioner of $A_i$.

In this section, we will always denote by $e_i^*$ the solution of~\eqref{localstokes} and $e_i = \alpha s_i$ with $s_i$ being the solution of~\eqref{localapproximatedstokes} and $\alpha$ giving by~\eqref{alpha}. 
With such choice of $\alpha$, we still have the first order condition 
\begin{equation}\label{localorth}
(E'(v_i), e_i) = 0.
\end{equation}

\begin{remark}\rm
In the original Vanka smoother for Navier-Stokes equation, $D_i = \omega \,{\rm diag} (A_i)$ with a suitable parameter $\omega \in (0.5,0.8)$~\cite{Vanka1986} and no line search is applied, i.e., $\alpha = 1$.
%The Braess-Sarazin smoother~\cite{Braess.D;Sarazin.R1997} for Stokes equation is also in the form~\eqref{localapproximatedstokes}. 
$\Box$\end{remark}
 
Using the first order condition~\eqref{localorth}, we still have the following identity. 
\begin{lemma}
%Let $v_i = v_{i-1} + e_i$ where $e_i=\alpha s_i$ with $s_i$ being the solution of~\eqref{localapproximatedstokes} and $\alpha$ giving by~\eqref{alpha}. We have the identity
$$
E(u^k) - E(u^{k+1}) = \frac{1}{2} \sum _{i=1}^N \|e_i\|^2_A.
$$
\end{lemma}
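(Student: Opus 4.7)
The plan is to reduce this identity to the one already proved in the exact-solver case (the theorem right before Lemma~\ref{lm:upper}), by observing that the only input in that earlier argument was the weak first-order condition $(E'(v_i), e_i) = 0$, which by~\eqref{localorth} continues to hold for the inexact update. The line search \eqref{linesearch}--\eqref{alpha} was designed precisely for this purpose, so the telescoping argument goes through verbatim.

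Concretely, I would first apply Lemma~\ref{lm:energyandnorm} with $w = v_{i-1}$ and $v = v_i$ to get
$$
E(v_{i-1}) - E(v_i) = \tfrac{1}{2}\|v_{i-1} - v_i\|_A^2 + (E'(v_i),\, v_{i-1} - v_i).
$$
Next, since the inexact update reads $v_i = v_{i-1} - \alpha s_i$ and $e_i = \alpha s_i \in \mcal K_i$, we have $v_{i-1} - v_i = e_i$, so the linear term is $(E'(v_i), e_i)$, which vanishes by~\eqref{localorth}. Hence $E(v_{i-1}) - E(v_i) = \tfrac{1}{2}\|e_i\|_A^2$. Finally, telescoping over $i=1,\ldots,N$ with $v_0 = u^k$ and $v_N = u^{k+1}$ produces
$$
E(u^k) - E(u^{k+1}) = \sum_{i=1}^N \bigl[E(v_{i-1}) - E(v_i)\bigr] = \tfrac{1}{2}\sum_{i=1}^N \|e_i\|_A^2.
$$

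There is no real obstacle here; the proof is formally identical to the exact SSO case. The only role of the preconditioner $D_i$ in~\eqref{localapproximatedstokes} and the explicit step $\alpha$ in~\eqref{alpha} is to arrange that the inexact iterate $v_i$ is still stationary along the one-dimensional direction $e_i$, which is exactly what makes the cross term disappear. No further property of $D_i$ (e.g., spectral equivalence with $A_i$) is needed for this lemma; such assumptions will only enter later when one tries to compare $\|e_i\|_A$ with $\|e_i^*\|_A$ in the upper bound step.
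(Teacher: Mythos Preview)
Your proof is correct and matches the paper's approach exactly. The paper does not spell out a proof for this lemma but simply remarks that, ``using the first order condition~\eqref{localorth}, we still have the following identity,'' thereby pointing to the same telescoping argument from the exact-solver case; your write-up makes this explicit and is precisely what is intended.
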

%\begin{proof}
%Using the relation between the energy difference and the norm, and the first order condition~\eqref{localorth} satisfied by the line search, we have
%$$
%E(v_{i-1}) - E(v_i) = \frac{1}{2}\|e_i\|^2_A + (E'(v_i), e_i) = \frac{1}{2}\|e_i\|^2_A.
%$$
%\end{proof}

Again the upper bound is more delicate. 
We first adapt the analysis in~\cite{Braess1999} to establish the following inequalities. Recall that for an SPD operator $M$, $\kappa (M) = \lambda_{\max}(M)/\lambda_{\min}(M)$ is the condition number of $M$.
\begin{lemma}For the inexact local solver described above, we have
\begin{equation}\label{localepsilon}
\|e_i^* - e_i\|_A \leq \epsilon \|e_i^*\|_A, \quad\text{ with } \epsilon = \frac{\kappa (D^{-1}_iA_i) - 1}{\kappa (D^{-1}_iA_i) + 1} \in (0,1).
\end{equation}
Consequently by the triangle inequality
\begin{equation}\label{localepsilon2}
\|e_i^* - e_i\|_A \leq \frac{\epsilon}{1-\epsilon} \|e_i\|_A = \frac{1}{2}(\kappa(D_i^{-1}A_i)-1)\|e_i\|_A.
\end{equation}
\end{lemma}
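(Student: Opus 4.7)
The plan is to recognize $e_i=\alpha s_i$ as one sweep of a preconditioned steepest-descent iteration applied to the reduced SPD problem on the constrained subspace $\mcal K_i$, and then invoke the classical Kantorovich-type estimate.

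First I would reduce the two saddle point systems \eqref{localstokes} and \eqref{localapproximatedstokes} to the subspace $\mcal K_i$. Since $B_i e_i^*=0$ and $B_i s_i=0$, both $e_i^*$ and $s_i$ lie in $\mcal K_i$. Testing the first rows against $v\in \mcal K_i$ makes the $B_i^T$ terms vanish, giving the variational identities $(A e_i^*,v)=(r_i,v)$ and $(D_i s_i,v)=(r_i,v)$ for all $v\in\mcal K_i$, and in particular $(A e_i^*,v)=(D_i s_i,v)$ on $\mcal K_i$. Hence, within $\mcal K_i$, $s_i$ plays the role of the preconditioned residual of $A_{\mcal K_i}e_i^*$ with preconditioner $\tilde D_i$ defined by $(\tilde D_i u,v)=(D_i u,v)$ for $u,v\in\mcal K_i$, and $e_i=\alpha s_i$ with $\alpha$ from \eqref{alpha} is precisely one step of preconditioned steepest descent starting from the zero iterate.

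Next I would expand the error in the $A$-norm,
\begin{equation*}
\|e_i^*-\alpha s_i\|_A^2 = \|e_i^*\|_A^2 - 2\alpha(Ae_i^*,s_i)+\alpha^2(As_i,s_i),
\end{equation*}
and substitute the optimal step, which by $(r_i,s_i)=(Ae_i^*,s_i)$ reduces to $\alpha=(Ae_i^*,s_i)/(As_i,s_i)$, to obtain
\begin{equation*}
\|e_i^*-e_i\|_A^2 = \|e_i^*\|_A^2\left(1-\frac{(Ae_i^*,s_i)^2}{(As_i,s_i)(Ae_i^*,e_i^*)}\right).
\end{equation*}
The remaining task is the lower bound $(Ae_i^*,s_i)^2/[(As_i,s_i)(Ae_i^*,e_i^*)]\geq 1-\epsilon^2=4\kappa/(\kappa+1)^2$. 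Diagonalizing in the generalized eigenproblem $A_{\mcal K_i}\phi_j=\lambda_j\tilde D_i\phi_j$ on $\mcal K_i$ and writing $e_i^*=\sum_j c_j\phi_j$, the ratio becomes $(\sum_j c_j^2\lambda_j^2)^2/[(\sum_j c_j^2\lambda_j^3)(\sum_j c_j^2\lambda_j)]$, which is precisely the Polya--Szeg\H{o}/Kantorovich inequality applied to the probability weights $w_j\propto c_j^2\lambda_j$ supported in $[\lambda_{\min}^{\mcal K_i},\lambda_{\max}^{\mcal K_i}]$. A Courant--Fischer comparison then gives $\kappa(\tilde D_i^{-1}A_{\mcal K_i})\leq \kappa(D_i^{-1}A_i)$, because $\mcal K_i\subset\mcal V_i$ and $(\tilde D_i v,v)=(D_i v,v)$ for $v\in\mcal K_i$. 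Since $(x-1)/(x+1)$ is monotone, this yields \eqref{localepsilon}.

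The second inequality is a short triangle-inequality manipulation from \eqref{localepsilon}: the bound $\|e_i^*\|_A\leq \epsilon\|e_i^*\|_A+\|e_i\|_A$ gives $(1-\epsilon)\|e_i^*\|_A\leq \|e_i\|_A$, and $\epsilon/(1-\epsilon)=(\kappa-1)/2$ finishes the calculation. The main obstacle is the middle step: although the Kantorovich-type bound is classical for steepest descent on an SPD system, in our setting $D_i$ is defined only on $\mcal V_i$ while the iteration effectively lives on $\mcal K_i$, so one must pass to the effective preconditioner $\tilde D_i$ and verify that restricting to the constrained subspace does not enlarge the condition number.
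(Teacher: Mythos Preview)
Your argument is correct and complete, but it takes a different route from the paper. The paper follows the Braess--Sarazin framework: it introduces the $D$-orthogonal projection $P_D = I - D^{-1}B^T(BD^{-1}B^T)^{-1}B$ onto $\mcal K_i$, defines $A_{\mcal K}=DP_DD^{-1}AP_D$, and writes the error for a \emph{fixed} relaxation parameter $\omega$ as $e_i^*-\omega s_i=(I-\omega D^{-1}A_{\mcal K})e_i^*$. Since the line search makes $\|e_i^*-e_i\|_A\le\|e_i^*-\omega s_i\|_A$ for every $\omega$, it then bounds by $\inf_{\omega}\|I-\omega D^{-1}A_{\mcal K}\|_{A_{\mcal K}}=(\kappa-1)/(\kappa+1)$, finally noting $\kappa(P_DD^{-1}AP_D)\le\kappa(D^{-1}A)$. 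By contrast, you bypass the auxiliary Richardson iteration entirely: you restrict both saddle systems to $\mcal K_i$ variationally, identify $s_i=\tilde D_i^{-1}A_{\mcal K_i}e_i^*$, plug the optimal $\alpha$ into $\|e_i^*-\alpha s_i\|_A^2$, and apply the Kantorovich inequality to the resulting Rayleigh-quotient ratio; your Courant--Fischer step is the direct analogue of the paper's $\kappa(P_DD^{-1}AP_D)\le\kappa(D^{-1}A)$. The paper's approach makes the connection to Braess--Sarazin smoothers explicit and avoids the eigen-expansion, while yours is more self-contained and shows transparently that the exact line search, not just the best fixed step, achieves the stated contraction.
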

\begin{proof}
To simplify the notation, we suppress the subscript $i$ in the proof. Let $\tilde e = \omega s$ where $s$ is determined by \eqref{localapproximatedstokes} and $\omega \in \mbb R$ is a parameter. Then following~\cite{Braess.D;Sarazin.R1997,Braess1999} we have the error equation
\begin{equation}
e^* - \tilde e = P_D(I-\omega D^{-1}A)e^* = (I - \omega D^{-1}A_\mcal K)e^*,
\end{equation}
where $P_D = I - D^{-1}B^T(BD^{-1}B^T)^{-1}B$ is the projection to $\mcal K$ in the $(\cdot, \cdot)_D:=(D\cdot,\cdot)$ inner product, and $A_\mcal K = DP_DD^{-1}AP_D$. 

Note that $P_D$ is symmetric in $(\cdot, \cdot)_D$. We can then verify $A_\mcal K$ is symmetric and semi-positive definite and $(\cdot, \cdot)_{A_\mcal K} = (\cdot, \cdot)_A$ restricted to $\mcal K$. Since the operator $D^{-1}A_\mcal K$ is symmetric w.r.t. $(\cdot, \cdot)_{A_\mcal K}$ and $e^*, \tilde e\in \mcal K$, we have
$$
\|e^* - \tilde e\|_A = \|e^* - \tilde e\|_{A_\mcal K}\leq \| I - \omega D^{-1}A_{\mcal K} \|_{A_\mcal K} \|e^*\|_{A}.
$$

By subtracting a fixed energy $E(v_i^*)$ from $E(v_{i-1}-\alpha s_i)$, it is easy to see the line search~\eqref{linesearch} is equivalent to $\min _{\alpha\in \mbb R}\|e_i^* - \alpha s_i\|_A$. Therefore
$$
\|e^* - e\|_A\leq \|e^* - \tilde e\|_A\leq \| I - \omega D^{-1}A_{\mcal K} \|_{A_\mcal K} \|e^*\|_{A}.
$$
Consequently
$$
\|e^* - e\|_A\leq \inf_{\omega \in \mbb R}\| I - \omega D^{-1}A_{\mcal K} \|_{A_{\mcal K}} \|e^*\|_{A} = \frac{\kappa (D^{-1}A_{\mcal K}) - 1}{\kappa (D^{-1}A_{\mcal K}) + 1}\|e^*\|_{A}.
$$
The condition number $\kappa (D^{-1}A_\mcal K)$, which is not easy to estimate since $A_{\mcal K}$ is not formed explicitly, can be bounded by
$$
\kappa (D^{-1}A_\mcal K) = \kappa (P_DD^{-1}AP_D)\leq \kappa(D^{-1}A).
$$
\end{proof}

\begin{lemma}
Assume that  the space decomposition satisfies assumptions (SD) and (SCS). 
% For the local gradient solver, i.e., $v_i = v_{i-1} + e_i$ where $e_i=\alpha s_i$ with $s_i$ being the solution of~\eqref{localapproximatedstokes} and $\alpha$ giving by~\eqref{alpha}, 
For SSO with the local in-exact solver described in this section, we have
$$
E(u^{k+1}) - E(u) \leq \frac{1}{2}C_A\left [C_S^{1/2} + \frac{1}{2}\left (\max_{1\leq i\leq N}\kappa(D_i^{-1}A_i)-1\right ) \right ]^2 \sum _{i=1}^N\|e_i\|^2_A.
$$
\end{lemma}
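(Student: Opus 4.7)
The plan is to mirror the proof of Lemma \ref{lm:upper} as closely as possible, starting from the identity
$$\|u^{k+1}-u\|_A^2 = (E'(u^{k+1}) - E'(u), u^{k+1} - u) = (E'(u^{k+1}), u^{k+1} - u),$$
and picking a stable decomposition $u^{k+1} - u = \sum_{i=1}^N w_i$, $w_i\in \mcal K_i$, with $\sum_i \|w_i\|_A^2 \leq C_A \|u^{k+1}-u\|_A^2$. The new difficulty is that, in the inexact case, we only have $(E'(v_i), e_i) = 0$ from \eqref{localorth}, and not the full local orthogonality $(E'(v_i), \phi_i) = 0$ for every $\phi_i \in \mcal K_i$ which drove the clean telescoping in Lemma \ref{lm:doublesum}.

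First I would telescope term by term, writing for each $i$
$$(E'(u^{k+1}), w_i) = (E'(v_i), w_i) + \sum_{j>i}(E'(v_j) - E'(v_{j-1}), w_i) = (E'(v_i), w_i) + \sum_{j>i} (e_j, w_i)_A.$$
Summing over $i$, the double sum is the same object that appeared in Lemma \ref{lm:doublesum}, so (SCS) followed by (SD) bounds it by $C_S^{1/2} C_A^{1/2} (\sum_i \|e_i\|_A^2)^{1/2}\,\|u^{k+1}-u\|_A$, exactly as before.

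The new, non-vanishing piece is $\sum_i (E'(v_i), w_i)$. The idea is to reintroduce the exact local update $v_i^* = v_{i-1} + e_i^*$ starting from the same $v_{i-1}$, for which the full local orthogonality $(E'(v_i^*), \phi_i) = 0$ holds for all $\phi_i \in \mcal K_i$. Using $w_i \in \mcal K_i$ and $v_i - v_i^* = e_i - e_i^*$,
$$(E'(v_i), w_i) = (E'(v_i) - E'(v_i^*), w_i) = (e_i - e_i^*, w_i)_A.$$
Then Cauchy--Schwarz, the a priori estimate \eqref{localepsilon2}, and (SD) give
$$\sum_i (E'(v_i), w_i) \leq \Bigl(\sum_i \|e_i - e_i^*\|_A^2\Bigr)^{\!1/2}\!\!\Bigl(\sum_i \|w_i\|_A^2\Bigr)^{\!1/2} \leq \tfrac{1}{2}\!\!\left(\max_i \kappa(D_i^{-1}A_i) - 1\right)\! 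C_A^{1/2} \Bigl(\sum_i \|e_i\|_A^2\Bigr)^{\!1/2}\!\|u^{k+1}-u\|_A.$$

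Adding the two contributions yields
$$\|u^{k+1}-u\|_A^2 \leq C_A^{1/2}\left[C_S^{1/2} + \tfrac{1}{2}\!\left(\max_i \kappa(D_i^{-1}A_i) - 1\right)\right] \Bigl(\sum_i \|e_i\|_A^2\Bigr)^{\!1/2} \|u^{k+1}-u\|_A.$$
Canceling one factor of $\|u^{k+1}-u\|_A$, squaring, and invoking $E(u^{k+1}) - E(u) = \tfrac{1}{2}\|u^{k+1}-u\|_A^2$ from Lemma \ref{lm:energyandnorm} gives the claimed bound. The main obstacle is the extra term $\sum_i(E'(v_i),w_i)$; the reduction to $\|e_i - e_i^*\|_A$ by comparison with the exact local minimizer, followed by the bound \eqref{localepsilon2}, is what transforms it into a contribution of the same form as the (SCS)-(SD) term, only with the condition-number dependent prefactor.
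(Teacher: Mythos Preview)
Your proof is correct and follows essentially the same approach as the paper: you choose a stable decomposition of $u^{k+1}-u$, telescope $(E'(u^{k+1}),w_i)$ to produce the double sum $\sum_{j>i}(e_j,w_i)_A$ handled by (SCS) and (SD), and control the leftover term $(E'(v_i),w_i)$ by comparing with the exact local minimizer $v_i^*$ and invoking \eqref{localepsilon2}. The paper organizes the same split as $(E'(u^{k+1})-E'(v_i),w_i)+(E'(v_i)-E'(v_i^*),w_i)$, which is just a regrouping of your expression.
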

\begin{proof}
As before, we chose a stable decomposition for $u^{k+1} - u = \sum _{i=1}^N w_i, w_i\in \mcal K_i, i=1, 2, \ldots, N$ and split as
\begin{align*}
(  E'(u^{k+1}) - E'(u), u^{k+1} - u )  & = (  E'(u^{k+1}) , u^{k+1} - u )  \\
&= \sum _{i=1}^N(  E'(u^{k+1}) - E'(v_i), w_i ) + (E'(v_i) - E'(v_i^*), w_i) \\
& = \sum _{i=1}^N\left [\sum _{j>i}^N (  E'(v_j) - E'(v_{j-1}), w_i ) + (v_i - v_i^*, w_i)_A\right ].
\end{align*}
The first term can be bounded as before, c.f. Lemma~\ref{lm:upper}
$$
 \sum _{i=1}^N\sum _{j>i}^N (  E'(v_j) - E'(v_{j-1}), w_i )
\leq (C_SC_A)^{1/2}\left (\sum_{i=1}^N \|e_i\|^2_A\right )^{1/2}\|u^{k+1} - u\|_A.
$$
For the second term, using \eqref{localepsilon2}, we have
\begin{align*}
\sum_{i=1}^N (v_i - v_i^*, w_i)_A &\leq \frac{\epsilon}{1-\epsilon}\sum_{i=1}^N\|e_i\|_A\|w_i\|_A \\
&\leq  \frac{\epsilon}{1-\epsilon} \left (\sum_{i=1}^N\|e_i\|_A^2\right )^{1/2}\left (\sum_{i=1}^N\|w_i\|_A^2\right )^{1/2}\\
&\leq  \frac{\epsilon}{1-\epsilon}C_A^{1/2} \left (\sum_{i=1}^N\|e_i\|_A^2\right )^{1/2}\|u-u^{k+1}\|_A.
\end{align*}
Combining these two estimates, we then get the desired result.
\end{proof}

\begin{theorem}
Assume that  the space decomposition satisfies assumptions (SD) and (SCS).  For SSO with the local in-exact solver described in this section, we have
$$
E(u^{k+1}) - E(u) \leq \rho \left [ E(u^{k}) - E(u) \right ],
$$
with contraction rate
$$
\rho = 1-\frac{1}{1+C_A\left [C_S^{1/2} + (\max_{1\leq i\leq N}\kappa(D_i^{-1}A_i)-1)/2\right ]^2}.
$$
\end{theorem}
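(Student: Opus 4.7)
The plan is to recognize that this convergence theorem is a one-line corollary of the two preceding lemmas of this section, mirroring the discrete-lower-bound / upper-bound scaffold already used for the exact-solver case in Section~4. The lower-bound identity $E(u^k)-E(u^{k+1}) = \tfrac{1}{2}\sum_{i=1}^N\|e_i\|_A^2$, furnished by the first lemma of this section, survives in the inexact setting because the line search~\eqref{linesearch} still delivers the local first-order condition~\eqref{localorth}, which is all the earlier identity ever used. The upper bound, just established in the lemma preceding this theorem, gives $d_{k+1} = E(u^{k+1})-E(u) \leq C_U\sum_{i=1}^N\|e_i\|_A^2$ with $C_U = \tfrac{1}{2}C_A\bigl[C_S^{1/2}+\tfrac{1}{2}(\max_i\kappa(D_i^{-1}A_i)-1)\bigr]^2$.

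From here the argument follows the familiar rearrangement: set $C_L = 1/2$, $c_0 := C_U/C_L = C_A\bigl[C_S^{1/2}+\tfrac{1}{2}(\max_i\kappa(D_i^{-1}A_i)-1)\bigr]^2$, and $\delta_k := d_k-d_{k+1}$. Chaining the two bounds gives $d_{k+1} \leq C_U\sum_i\|e_i\|_A^2 = c_0\,\delta_k = c_0(d_k-d_{k+1})$, which rearranges to $d_{k+1} \leq \tfrac{c_0}{1+c_0}d_k = \bigl(1-\tfrac{1}{1+c_0}\bigr)d_k$. Identifying this $c_0$ with the expression inside the denominator of the statement gives precisely the advertised contraction rate $\rho$.

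The real work is therefore not in the present theorem but in the preceding upper-bound lemma, where the exact-solver argument of Lemma~\ref{lm:upper} had to be upgraded to absorb a new perturbation term $\sum_i(v_i-v_i^*,w_i)_A$ measuring the gap between the inexact increment $e_i$ and the exact local corrector $e_i^*$. The step I expect to be the main obstacle (and the one I would budget most care for if building the theory from scratch) is the approximation bound~\eqref{localepsilon2}: one must identify the projector $P_D = I - D^{-1}B^T(BD^{-1}B^T)^{-1}B$ as symmetric in $(\cdot,\cdot)_D$, deduce that $D^{-1}A_{\mcal K}$ with $A_{\mcal K} = DP_DD^{-1}AP_D$ is self-adjoint in $(\cdot,\cdot)_{A_{\mcal K}}$, invoke the classical optimal first-order polynomial bound $\inf_\omega\|I-\omega D^{-1}A_{\mcal K}\|_{A_{\mcal K}} = (\kappa-1)/(\kappa+1)$, and finally replace the hard-to-estimate $\kappa(D^{-1}A_{\mcal K})$ by the computable majorant $\kappa(D^{-1}A)$. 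With that estimate in hand, the present theorem reduces to the two-line chaining above.
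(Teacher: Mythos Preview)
Your proposal is correct and matches the paper's intended argument: the paper does not even spell out a proof for this theorem, precisely because it is the immediate combination of the two preceding lemmas via the $d_{k+1}\leq c_0(d_k-d_{k+1})$ rearrangement already set up in Section~4. Your identification of $C_L=1/2$, $C_U=\tfrac{1}{2}C_A[C_S^{1/2}+\tfrac{1}{2}(\max_i\kappa(D_i^{-1}A_i)-1)]^2$, and $c_0=C_U/C_L$ is exactly right, and your added commentary on where the real difficulty lies (the bound~\eqref{localepsilon2} and the upgraded upper-bound lemma) accurately reflects the paper's structure.
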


We end this section with several remarks.
\begin{remark}\rm
To be an efficient local solver, $D_i$ is usually a diagonal matrix which may not be a good preconditioner for elliptic operators. The rate will deteriorate as $\epsilon$ becomes close to one, i.e., $\kappa(D^{-1}_iA_i)\gg 1$. On the other hand, for elliptic operators in $\mbb R^n$, and for $D_i = {\rm diag}(A_i)$, we have estimate $\kappa(D^{-1}_iA_i) \lesssim \dim (\mcal V_i)^{2/n}$~\cite{Bank.R;Scott.L1989}.
%, i.e., the condition number can bounded by the dimension of the subspace $\mcal V_i$. 
We can thus apply the estimate to a decomposition such that each local problem is of size $\mcal O(1)$.
$\Box$\end{remark}

\begin{remark}\rm
The solver considered here is one step of the preconditioned gradient method. The same analysis is applicable to a more efficient Preconditioned Conjugate-Gradient (PCG) solver with more than one iteration. The first order condition~\eqref{localorth} still holds for the PCG iterations. 
%It will result a better constant $\epsilon$ since $\sqrt{\kappa(D_iA_i)} \leq \kappa(D_iA_i)$.
$\Box$\end{remark}

\begin{remark}\rm
The local gradient method is a nonlinear iterative method since the parameter $\alpha$ depends on the iteration. To prove the energy contraction for the linear constraint smoother, i.e., with a fixed parameter $\alpha$, we need to estimate the spectrum of the operator $P_DD^{-1}AP_D$ which is not easy since the projection $P_D$ is in a $L^2$-type inner product not the $A$-inner product. Technically, the first order condition~\eqref{localorth} may not hold for a fixed parameter $\alpha$.
$\Box$\end{remark}

\section{Application to Mixed methods for Poisson and Darcy Equation}\label{sec:mixPoisson}
In this section, we consider mixed finite element methods for solving the Poisson  equation and Darcy equation in two and three dimensions. Let $\Omega$ be a polygon or polyhedron domain and triangulated into a quasi-uniform mesh $\mcal T_h$ with mesh size $h$. Assume that $\mcal T_h$ is obtained by uniform refinements from an initial mesh $\mcal T_1$ of $\Omega$, i.e., there exists  a sequence of meshes $\mcal T_1, \mcal T_2, \ldots, \mcal T_J = \mcal T_h$. The triangulation $\mcal T_1$ is a shape regular triangulation of $\Omega$ and $\mcal T_{k+1}$ is obtained by dividing each element in $\mcal T_{k}$ into four congruent small elements (two dimensions) or eight small elements (three dimensions). The mesh size $\mcal T_k$ will be denoted by $h_k$. By the construction $h_k/h_{k+1} = 2$.

\subsection{Problem setting} We consider the Poisson equation with Neumann boundary
$$
-\Delta p = f \text{ in } \Omega, \quad \partial_n p = 0 \text{ on } \partial \Omega
$$ 
where $n$ is the outwards normal vector of $\partial \Omega$. Let $u = \nabla p$. We obtain the mixed formulation of Poisson equation: find $u\in H_0(\div;\Omega):=\{v\in (L^2(\Omega))^2, \div v \in L^2(\Omega), v\cdot n |_{\partial \Omega}= 0\}$, where $v\cdot n$ should be understood in the trace sense, and $p\in L^2_0(\Omega) := \{q\in L^2(\Omega), \int_{\Omega}q \dx = 0\}$ such that 
\begin{align*}
(u, v) - (\div v, p) &= 0, \quad \forall v\in H_0(\div;\Omega), \\
 -(\div u, q) &= (f, q), \quad \forall q\in L^2_0(\Omega).
\end{align*}

Choose finite element spaces $\mcal V \subset H_0(\div;\Omega)$ and $\mcal P \subset L^2_0(\Omega)$ so that the following sequence is exact
\begin{equation}\label{2Dexact}
\mcal S \stackrel{\curl}{\longrightarrow} \mcal V \stackrel{\div}{\longrightarrow} \mcal P \to 0,
\end{equation}
where $\mcal S$ is another appropriate finite element space. Choices of $\mcal S, \mcal V$, and $\mcal P$ will be made clear in the context. Subscript $k$ will be used when spaces are associated with triangulation $\mcal T_k$ and when $k=J$ the subscript will be suppressed.

The saddle point problem can be written as follows: Given $f\in \mcal P$, find $w\in \mcal V, p\in \mcal P$ such that
\begin{equation}\label{mixedpoisson0}
\begin{pmatrix}
M & B^T\\
B & O
\end{pmatrix}
\begin{pmatrix}
w\\
p
\end{pmatrix}
=
\begin{pmatrix}
0\\
f
\end{pmatrix},
\end{equation}
where $M$ is the mass matrix of $\mcal V $ and $B$ is the discretization of $-\div$ operator. For this problem, $A=M$ and the $A$-norm is just the standard $L^2$-norm. 

Our method and analysis can be readily adapted to the second order elliptic equation with variable coefficients $K$ i.e., Darcy equation, for which the constitutive equation becomes $(K^{-1} u, v) - (\div v, p) = 0$. The $A$-norm is a weighted $L^2$-norm and the exact sequence \eqref{2Dexact} still holds. The constant $C_A$, however, could depend on the condition number of $K$; see Remark \ref{rm:weightednorm}.
%Such mixed finite element approximation has important application in numerical simulation of Darcy's law.

To apply our framework, we should first find a $u_*\in \mcal V$ satisfying $Bu_* = f$. 
Set $w= u_* + u$, the system \eqref{mixedpoisson0} can be changed to the form of \eqref{ABBO}:
\begin{equation}\label{mixedpoisson}
\begin{pmatrix}
M & B^T\\
B & O
\end{pmatrix}
\begin{pmatrix}
u\\
p
\end{pmatrix}
=
\begin{pmatrix}
-Mu_*\\
0
\end{pmatrix}.
\end{equation}
%
%The corresponding smoother is the block Gauss-Seidel smoother developed for H(div) problem in~\cite{Arnold.D;Falk.R;Winther.R2000}.
%
%
As discussed in Section \ref{sec:algorithm}, we can find such $u_*$ by solving $BB^Tu_* = Bf$. We now discuss a more efficient way utilizing the hierarchical structure of meshes. We start from a solution $u_*^1$ of \eqref{mixedpoisson0} on the coarsest mesh $\mcal T_1$ which can be found by direct solvers. For $k=1,\ldots, J-1$, when $u_*^k$ on $\mcal T_k$ with property $Bu_*^k = f$ holds element-wise on $\mcal T_k$ is found, for each element $T\in \mcal T_{k}$, we solve \eqref{mixedpoisson0} in $\mcal V_{k+1}$  restricted to $T$ and with boundary condition $u\cdot n|_{\partial T} = u_*^k\cdot n|_{\partial T}$. That is we use $\mcal T_k$ to get a domain decomposition of $\mcal T_{k+1}$ and $u_*^k$ as the boundary condition to decompose a global problem into local problems on elements. The local problem is well defined since the compatible condition is enforced by $Bu_*^k = f$ on $T$ and the solution of the local problem will give $u_*^{k+1}$ with the property $Bu_*^{k+1} = f$ for each element in $\mcal T_{k+1}$. The whole procedure is just one V-cycle with post-smoothing only and using a non-overlapping Schwarz method as a smoother. The computational cost is thus negligible. 

Thanks to the exact sequence \eqref{2Dexact}, we have a clear characterization of $\ker (B) = \curl (\mcal S)$ which will be helpful to construct a stable multilevel decomposition of $\mcal K$. Based on the hierarchy of the meshes, we have a macro-decomposition of $\mcal S = \sum _{k=1}^J \mcal S_k$. For each space $\mcal S_k$, we decompose into one dimensional subspaces $\Phi_{k,j}$ spanned by one basis function, i.e., $\mcal S_k = \sum_{j=1}^{N_k}\Phi_{k,j}$ with $N_k = \dim \mcal S_k$. Let $\Omega_{k,i}$ be the support of $\Phi_{k,i}$. We chose $\mcal V_{k,i} = H_0(\div;\Omega_{k,i})\cap \, \mcal V_k$ for $i = 1,\ldots, N_k$. 
Then we have the decomposition 
\begin{equation}\label{RTdec}
\mcal V = \sum_{k=1}^J\sum_{i=1}^{N_k}\mcal V_{k,i}, 
\end{equation}
and
$$
\mcal K = \sum_{k=1}^J\sum_{i=1}^{N_k}\mcal K_{k,i} 
\text{ with } 
\mcal K_{k,i} = \curl \Phi_{k,i}.
$$
%We explain the procedure here again. 
%In the finest grid $\mcal T_h$, the block Gauss-Seidel smoother for solving $(u,p)$ of \eqref{mixedpoisson} can be interpreted as an overlapping Schwarz method for solving the original problem \eqref{mixedpoisson0}. Let $\Omega_{i} = \supp(\curl \Phi_{J,i})$. We can update $(w_i,p_i)$ directly by solving the original problem \eqref{mixedpoisson0} restricted to $\Omega_i$ with a pre-described  boundary normal flux on $\partial \Omega_{i}$ given by previous approximation $w_{i-1}$. For elements $\tau$ inside $\Omega_i$, the equation $Bw_i = f$ holds. For elements $\tau$ not in $\Omega_i$, $Bw_i$ on $\tau$ is un-changed since the boundary flux is un-changed.
%
%If we begin with the initial guess with the prescribed boundary condition of $w$ on $\partial \Omega$ (not necessarily homogenous boundary condition), one iteration of the multiplicative Schwarz smoother in the fine grid will give a $u_*$ satisfying $Bu_* = f$. 

%All later iterations $w^k$ for $k\geq 1$ can be served as $u_*$ since the constraint $Bw^k = f$ is always satisfied. After the smoothing in the finest grid, the residual equation is in the form \eqref{ABBO}. We can restrict the residual to the coarse grid and apply the algorithm with homogenous constraint. 
%Note that the residual equation will be in the form of \eqref{mixedpoisson}. 

We shall apply SSO based on the space decomposition $\mcal K = \sum_{k=1}^J\sum_{i=1}^{N_k}\mcal K_{k,i}$ and prove its uniform convergence. As an example, for the lowest order RT element \cite{Raviart.P;Thomas.J1977}, the smoother is an overlapping multiplicative Schwarz smoother requiring solving a small saddle point system at  in the patch of each vertex in two dimensions and in the patch of each edge in three dimensions.
Since the construction of a stable decomposition in two and three dimensions is different, we split the discussion into two subsections. 

\begin{remark}\rm
One can use a basis of $\mcal S$ to reduce the saddle point system into a SPD one and develop multigrid methods or domain decomposition methods for the SPD formulation; see e.g.~\cite{Ewing.R;Wang.J1992a,Hiptmair1999,Cai2003}. 
$\Box$\end{remark}

\subsection{Two dimensions}
In two dimensions, the space $\mcal S\subset H_0^1(\Omega)$ is a Lagrange element space based on the mesh $\mcal T_h$. 
To be specific, we will consider the important case when $\mcal S$ is the simplest linear finite element space, $\mcal V$ is the lowest order Raviart-Thomas element space~\cite{Raviart.P;Thomas.J1977}, and $\mcal P$ is the piecewise constant space.  Extension to high order elements is straightforward. The subspace $\mcal V_{k,i}$ is spanned by basis vectors of edges connecting to the $i$th vertex in triangulation $\mcal T_k$. 

We first verify the stable decomposition for the macro-decomposition $\mcal K = \sum_{k=1}^J\mcal K_{k}$. We denoted by $Q_k$ the $L^2$ projection $Q_k: \mcal S_J \to \mcal S_k$ for $k=1,\ldots, J$ and set $Q_0 = 0$. Note that due to the nestedness $Q_l Q_k = Q_l$ for $l\leq k$.
\begin{lemma}\label{lm:poissonmacro}
For every $v\in \mcal K$, there exists $v_k\in \mcal K_i, k=1,\ldots,J$ such that $v = \sum_{k=1}^J v_k$ and $\sum_{k=1}^J\|v_k\|^2\lesssim \|v\|^2$. 
\end{lemma}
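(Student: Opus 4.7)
The strategy is to lift each $v \in \mcal K$ to a stream function in $\mcal S$ and reduce the problem to the classical multilevel decomposition in the $H^1$ seminorm on the hierarchy $\mcal S_1 \subset \cdots \subset \mcal S_J$.

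By the exactness of the sequence \eqref{2Dexact}, every $v \in \mcal K = \ker(B)$ admits a stream function $\phi \in \mcal S$ with $v = \curl \phi$. In two dimensions $|\curl \phi| = |\nabla \phi|$ pointwise, so $\|v\|^2 = |\phi|_{H^1}^2$, and the task reduces to building a stable $H^1$-decomposition of $\phi$ on the nested hierarchy.

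Let $Q_k : L^2(\Omega) \to \mcal S_k$ be the $L^2$-projection with $Q_0 = 0$, and set
$$
\phi_k = (Q_k - Q_{k-1})\phi \in \mcal S_k, \qquad v_k = \curl \phi_k \in \curl(\mcal S_k) = \mcal K_k.
$$
Telescoping gives $\sum_{k=1}^J \phi_k = Q_J \phi = \phi$, hence $\sum_{k=1}^J v_k = \curl \phi = v$. Using the inverse inequality $|\phi_k|_{H^1} \lesssim h_k^{-1} \|\phi_k\|$ on each level together with the standard BPX-type estimate
$$
\sum_{k=1}^J h_k^{-2} \|(Q_k - Q_{k-1})\phi\|^2 \lesssim |\phi|_{H^1}^2,
$$
which holds on quasi-uniform nested conforming hierarchies, I can conclude
$$
\sum_{k=1}^J \|v_k\|^2 = \sum_{k=1}^J |\phi_k|_{H^1}^2 \lesssim \sum_{k=1}^J h_k^{-2} \|(Q_k - Q_{k-1})\phi\|^2 \lesssim |\phi|_{H^1}^2 = \|v\|^2.
$$

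The key technical ingredient is the BPX estimate above, which is proved from the $L^2$-approximation property $\|\phi - Q_k \phi\| \lesssim h_k |\phi|_{H^1}$ combined with a strengthened Cauchy--Schwarz inequality across levels (or equivalently from $H^1$-stability of $Q_k$), and crucially requires no regularity beyond $\phi \in H^1$, consistent with the paper's emphasis on avoiding a full regularity assumption. The only substantive choice in the argument is to work with the $L^2$-projection rather than a nodal interpolant, so that $\phi - Q_k \phi$ enjoys the optimal $L^2$-approximation rate without needing extra smoothness of $\phi$; once that choice is made, the remaining steps are the curl--lift, telescoping, inverse estimate, and BPX, each of which is routine in this quasi-uniform nested setting.
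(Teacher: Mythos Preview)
Your proof is correct and follows essentially the same approach as the paper: lift $v$ to a stream function $\phi\in\mcal S$ via the exact sequence, decompose $\phi$ using the telescoping $L^2$-projections $(Q_k-Q_{k-1})\phi$, and invoke the known $H^1$-stability of this multilevel decomposition for Lagrange elements. The paper simply cites the $H^1$-stability result from \cite{Xu1992} directly, whereas you spell out the mechanism via the inverse inequality and the BPX-type weighted $L^2$ estimate; these are equivalent formulations of the same fact.
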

\begin{proof}
In two dimensions, we have the relation $(\curl \phi, \curl \psi) = (\nabla \phi, \nabla \psi)$. Therefore the stable decomposition (SD) comes from that for the Lagrange elements. More specifically, since $u\in \mcal K$, there exists a unique $\phi \in \mcal S$ such that $u = \curl \phi$. We then chose the $H^1$-stable decomposition of $\phi$ as $\phi = \sum_{k=1}^J(Q_k - Q_{k-1})\phi$ and let $u_k = \curl (Q_k - Q_{k-1})\phi$. The stable decomposition for the decomposition $u = \sum_{k}u_k$ in $M$-norm is equivalent to that of $\phi = \sum_{k=1}^J(Q_k - Q_{k-1})\phi$ in $H^1$-norm which is well known; see e.g.~\cite{Xu1992}.
\end{proof}

\begin{remark}\label{rm:weightednorm}\rm
 For Darcy equation with variable coefficients $K$, the $A$-norm of $v$ is changed to a weighted $H^1$ norm of $\phi$ for $v = \curl \phi$. If assuming $K$ is piecewise constant on the coarsest mesh, we can find a multilevel decomposition using hierarchical basis such that the inequality $\sum_{k=1}^J\|v_k\|^2\leq C|\log h|\|v\|^2$ holds with a penalty factor $|\log h|$ but with a constant $C$ independent of the variation of $K$; see \cite{Bank1988}.
 \qed
\end{remark}
We then verify the micro-decomposition is stable. 
\begin{lemma}\label{lm:poissonmicro}
Let $\phi_k = (Q_k - Q_{k-1})\phi = \sum_{i=1}^{N_k}\phi_{k,i}$ be the nodal basis decomposition and let $u_{k,i} = \curl \phi_{k,i}$. Then the decomposition $u_k = \sum_{i=1}^{N_k} u_{k,i}$ is stable in $L^2$-norm.
\end{lemma}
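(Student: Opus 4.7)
The plan is to translate the claim about $u_k=\sum_i u_{k,i}$ in the $L^2$ ($M$-)norm into a statement about the scalar potentials $\phi_k=\sum_i\phi_{k,i}$ in the $H^1$ seminorm, then reduce that to a level-wise approximation estimate for the detail $\phi_k=(Q_k-Q_{k-1})\phi$. In two dimensions, $\|\curl f\|=\|\nabla f\|$, so $\|u_{k,i}\|=\|\nabla\phi_{k,i}\|$ and $\|u_k\|=\|\nabla\phi_k\|$, and the target inequality $\sum_i\|u_{k,i}\|^2\lesssim\|u_k\|^2$ is equivalent to
\begin{equation*}
\sum_{i=1}^{N_k}\|\nabla\phi_{k,i}\|^2\lesssim\|\nabla\phi_k\|^2.
\end{equation*}

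Next I would pass from the gradient sum on the left to nodal values. Writing $\phi_{k,i}=\phi_k(x_{k,i})\,\varphi_{k,i}$ with $\varphi_{k,i}$ the hat function at the $i$-th vertex of $\mcal T_k$, quasi-uniformity and the standard scaling in two dimensions give $\|\nabla\varphi_{k,i}\|^2\lesssim 1$ and $\|\varphi_{k,i}\|_{L^2}^2\approx h_k^2$. Hence
\begin{equation*}
\sum_i\|\nabla\phi_{k,i}\|^2\lesssim\sum_i|\phi_k(x_{k,i})|^2\approx h_k^{-2}\sum_i|\phi_k(x_{k,i})|^2\|\varphi_{k,i}\|_{L^2}^2\approx h_k^{-2}\|\phi_k\|_{L^2}^2,
\end{equation*}
the last step being the usual mass-lumping equivalence on a quasi-uniform mesh. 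Everything so far is bookkeeping, which reduces the lemma to proving the inverse-type estimate
\begin{equation*}
\|\phi_k\|_{L^2}\lesssim h_k\,\|\nabla\phi_k\|.
\end{equation*}

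The main, non-trivial step is this last inequality, which would be false for an arbitrary element of $\mcal S_k$ (it runs opposite to the usual inverse estimate). The point is to exploit the detail-level structure: since $Q_{k-1}\phi_k=Q_{k-1}(Q_k-Q_{k-1})\phi=Q_{k-1}\phi-Q_{k-1}\phi=0$, the function $\phi_k$ is $L^2$-orthogonal to $\mcal S_{k-1}$. Applying the standard approximation property of the $L^2$-projection,
\begin{equation*}
\|\phi_k\|_{L^2}=\|\phi_k-Q_{k-1}\phi_k\|_{L^2}\lesssim h_{k-1}\|\nabla\phi_k\|\approx h_k\|\nabla\phi_k\|,
\end{equation*}
because $h_{k-1}=2h_k$ by the uniform refinement assumption. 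Chaining the three inequalities yields $\sum_i\|u_{k,i}\|^2\lesssim\|u_k\|^2$ with a constant independent of $k$ and $J$, which is exactly the stability needed (and is the statement that, combined with Lemma \ref{lm:poissonmacro}, will verify assumption (SD) for the two-level decomposition of $\mcal K$).
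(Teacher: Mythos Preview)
Your proof is correct and follows essentially the same route as the paper: first reduce $\sum_i\|u_{k,i}\|^2$ to $h_k^{-2}\|\phi_k\|^2$ via the inverse inequality and the $L^2$-stability of the nodal basis, then use $\phi_k=(I-Q_{k-1})\phi_k$ together with the approximation property of $Q_{k-1}$ to obtain $\|\phi_k\|\lesssim h_k\|\nabla\phi_k\|=h_k\|u_k\|$. The only cosmetic difference is that you spell out the scaling of the hat functions explicitly, whereas the paper packages the first step as ``inverse inequality plus stability of the nodal basis decomposition in $L^2$''.
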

\begin{proof}
We apply the inverse inequality and the stability of the nodal basis decomposition in $L^2$-norm to get
$$
\sum_{i=1}^{N_k} \|u_{k,i}\|^2 = \sum_{i=1}^{N_k} \| \curl \phi_{k,i}\|^2\lesssim h_k^{-2}\sum_{i=1}^{N_k} \| \phi_{k,i}\|^2\lesssim h_k^{-2}\| \phi_{k}\|^2.
$$
We write the term $\phi_k = (Q_k - Q_{k-1})\phi = (I- Q_{k-1}) (Q_k - Q_{k-1})\phi$ and bound it as
$$
\|\phi_k\| \lesssim h_k\| \curl (Q_k - Q_{k-1})\phi\| = h_k \|u_k\|.
$$
The desired inequality then follows.
\end{proof}

Combination of Lemma \ref{lm:poissonmacro} and \ref{lm:poissonmicro} leads to a stable multilevel decomposition. 
\begin{theorem}\label{th:SDpoisson}
For every $v\in \mcal K$, there exists $v_{k,i}\in \mcal K_{k,i}, k=1,\ldots,J, i=1,\ldots, N_k$ such that $v = \sum_{k=1}^J\sum_{i=1}^{N_k} v_{k,i}$ and $\sum_{k=1}^J\sum_{i=1}^{N_k}\|v_{k,i}\|^2\lesssim \|v\|^2$. 
\end{theorem}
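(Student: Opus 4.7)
The statement is essentially the composition of the macro-level stability of Lemma \ref{lm:poissonmacro} with the micro-level stability of Lemma \ref{lm:poissonmicro}, so my plan is to chain the two estimates and then verify that each piece in the resulting decomposition actually lives in the prescribed subspace $\mcal K_{k,i}$.

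First, I would take $v\in\mcal K$ and invoke the exact sequence \eqref{2Dexact} to write $v=\curl\phi$ for a unique $\phi\in\mcal S$. Applying Lemma \ref{lm:poissonmacro}, I obtain the level decomposition $v=\sum_{k=1}^J v_k$ with $v_k=\curl(Q_k-Q_{k-1})\phi$ and $\sum_{k=1}^J\|v_k\|^2\lesssim \|v\|^2$. Next, for each fixed level $k$, I would expand $(Q_k-Q_{k-1})\phi=\sum_{i=1}^{N_k}\phi_{k,i}$ in the nodal basis of $\mcal S_k$ and set $v_{k,i}:=\curl\phi_{k,i}$. Lemma \ref{lm:poissonmicro} then yields $\sum_{i=1}^{N_k}\|v_{k,i}\|^2\lesssim\|v_k\|^2$.

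Summing the micro estimate over $k$ and using the macro estimate gives
\begin{equation*}
\sum_{k=1}^J\sum_{i=1}^{N_k}\|v_{k,i}\|^2\lesssim \sum_{k=1}^J\|v_k\|^2\lesssim \|v\|^2,
\end{equation*}
and by construction $v=\sum_{k=1}^J\sum_{i=1}^{N_k}v_{k,i}$.

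The only point that requires a careful check — and the step I would call the main obstacle, though it is a bookkeeping obstacle rather than an analytical one — is confirming that $v_{k,i}\in\mcal K_{k,i}=\mcal V_{k,i}\cap\ker(B)$. Since $\phi_{k,i}$ is supported on the vertex patch $\Omega_{k,i}$, the field $v_{k,i}=\curl\phi_{k,i}$ is supported in $\Omega_{k,i}$, has vanishing normal trace on $\partial\Omega_{k,i}$ (because $\phi_{k,i}$ vanishes there), and is divergence free. Moreover, $v_{k,i}$ belongs to the Raviart--Thomas space $\mcal V_k\subset\mcal V$ because $\curl$ maps $\mcal S_k$ into $\mcal V_k$ by the exact sequence at level $k$. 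Hence $v_{k,i}\in H_0(\div;\Omega_{k,i})\cap\mcal V_k=\mcal V_{k,i}$ and $v_{k,i}\in\ker(B)$, so $v_{k,i}\in\mcal K_{k,i}$ as required. This completes the proof.
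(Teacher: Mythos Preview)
Your proposal is correct and follows exactly the approach the paper intends: the theorem is stated immediately after the sentence ``Combination of Lemma~\ref{lm:poissonmacro} and~\ref{lm:poissonmicro} leads to a stable multilevel decomposition,'' with no further proof given. Your additional verification that $v_{k,i}\in\mcal K_{k,i}$ is a welcome detail that the paper leaves implicit.
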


To verify assumption (SCS), we first present the following inequality and refer to \cite{Xu1992} for a proof. 
\begin{lemma}\label{lm:SCS}
For any $\phi_k\in \mcal S_k, \phi_l\in \mcal S_l, l\geq k$, we have
$$
(\curl \phi_k, \curl \phi_l)
\lesssim 
\left (\frac{1}{2}\right )^{l-k}\|\curl \phi_k\| h_{l}^{-1}\|\phi_l\|.
$$
\end{lemma}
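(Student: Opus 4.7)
The plan is to exploit the two-dimensional identity $(\curl \phi_k,\curl \phi_l)=(\nabla \phi_k,\nabla \phi_l)$, so it suffices to control $(\nabla \phi_k,\nabla \phi_l)$. The crucial structural fact driving the proof is that $\phi_k$ is piecewise linear on the coarser triangulation $\mcal T_k$, so $\Delta\phi_k\equiv 0$ inside every coarse element.

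First I would integrate by parts element by element on $\mcal T_k$: the volume contributions vanish, and what survives is a sum over the interior coarse edges,
$$
(\nabla \phi_k,\nabla \phi_l)=\sum_{e\in\mcal E_k^{\rm int}}[\partial_n\phi_k]_e\int_e\phi_l\,ds,
$$
where $[\partial_n\phi_k]_e$ denotes the edgewise-constant jump of $\partial_n\phi_k$ across $e$; boundary edges drop out because the exact sequence forces $\phi$ to be constant on each connected component of $\partial\Omega$. Second, I would estimate the two factors separately on each $e$. Since $\nabla\phi_k$ is piecewise constant, an inverse inequality on the two-triangle coarse patch $\omega_e^k$ yields $|[\partial_n\phi_k]_e|\lesssim h_k^{-1}\|\nabla\phi_k\|_{L^2(\omega_e^k)}$. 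For $\int_e\phi_l\,ds$ I would partition the coarse edge into its $2^{l-k}$ fine sub-edges in $\mcal E_l$ and use a scaled trace inequality element by element on the strip $\omega_e^l$ of fine triangles bordering $e$ to bound $\|\phi_l\|_{L^2(e)}\lesssim h_l^{-1/2}\|\phi_l\|_{L^2(\omega_e^l)}$.

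Multiplying the two edgewise bounds, summing over $e\in\mcal E_k^{\rm int}$ with Cauchy-Schwarz, and invoking the bounded overlap of the coarse patches $\omega_e^k$ and of the fine strips $\omega_e^l$ (each coarse triangle lies in $O(1)$ patches, each fine triangle in $O(1)$ strips) collapses the double sum into a product of global $L^2$ norms and yields an inequality of the target shape up to a geometric factor $(h_l/h_k)^{\sigma}=(1/2)^{\sigma(l-k)}$. The main obstacle is achieving the sharp exponent $\sigma=1$ announced in the lemma: a direct Cauchy-Schwarz on the line integral produces only the half-rate $\sigma=1/2$, and recovering the extra half-power requires a more careful accounting of the averaged cancellations of $\phi_l$ along the long coarse edge. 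That sharper estimate is exactly the content of the inequality in~\cite{Xu1992} to which the paper defers, so I would cite it rather than redo it.
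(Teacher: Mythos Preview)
The paper gives no proof of this lemma at all; it merely states the inequality and refers to~\cite{Xu1992}. Your sketch is therefore strictly more than what the paper does, and it is essentially the standard argument one finds behind that citation: integrate by parts on the coarse triangles (where $\Delta\phi_k=0$), reduce to jump terms on coarse edges, bound the jump by an inverse inequality and the edge integral of $\phi_l$ by a scaled trace on the thin strip of fine elements, and finish with Cauchy--Schwarz plus bounded overlap. So your approach and the paper's coincide in spirit---both ultimately defer to~\cite{Xu1992}---you just unpack what is inside.

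Two small comments. First, your argument indeed only produces the exponent $\sigma=1/2$, i.e.\ the factor $(1/2)^{(l-k)/2}$ rather than $(1/2)^{l-k}$. That is exactly what the strip argument in the cited reference yields, and it is all that is needed here: in the theorem immediately following the lemma the factor is summed geometrically over $l>k$, and any $\gamma\in(0,1)$ suffices to make the double sum collapse. So you need not worry about recovering the extra half-power; the lemma as stated is a bit generous with the exponent, but the weaker version you actually prove is enough for the application. Second, the reason boundary edges drop out is simply that $\phi_l\in\mcal S_l\subset H_0^1(\Omega)$ vanishes on $\partial\Omega$; no exact-sequence reasoning is needed, and your phrasing there is slightly misleading.
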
 
We use the lexicographical order of the double index, i.e., $(l,j)>(k,i)$ if $l>k$ or $l=k,j>i$. 
\begin{theorem}For any $u_{k,i}\in \mcal K_{k,i}$ and $v_{l,j}\in \mcal K_{l,j}$, we have
$$
\sum_{k=1}^J\sum_{i=1}^{N_k} \sum_{(l,j)>(k,i)}(u_{k,i}, v_{l,j}) \lesssim \left (\sum_{k=1}^J\sum_{i=1}^{N_k}\|u_{k,i}\|^2 \right )^{1/2} \left (\sum_{l=1}^J\sum_{j=1}^{N_l}\|v_{l,j}\|^2\right )^{1/2}.
$$
\end{theorem}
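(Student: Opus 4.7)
The plan is to reduce the triple-indexed double sum to a weighted sum on the level indices $k,l$ only, then apply a standard geometric-series Schur-type argument. I would split the sum into a diagonal part ($l=k$, $j>i$) and a strictly off-diagonal part ($l>k$), handling them by very different tools. Throughout, for $u_{k,i}\in \mcal K_{k,i}$ I write $u_{k,i}=\curl \phi_{k,i}$ with $\phi_{k,i}\in \Phi_{k,i}$ (a scalar multiple of the hat function supported on $\Omega_{k,i}$), and similarly for $v_{l,j}$. Set $a_k := \bigl(\sum_i \|u_{k,i}\|^2\bigr)^{1/2}$ and $b_l := \bigl(\sum_j \|v_{l,j}\|^2\bigr)^{1/2}$.

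For the strictly off-diagonal part, I would fix $k<l$, collect $\phi_k:=\sum_i \phi_{k,i}$ and $\phi_l:=\sum_j \phi_{l,j}$, and invoke Lemma \ref{lm:SCS} with these lumped functions to obtain
\begin{equation*}
\sum_{i,j}(u_{k,i},v_{l,j}) = (\curl\phi_k,\curl\phi_l) \lesssim \left(\tfrac{1}{2}\right)^{l-k}\|\curl\phi_k\|\, h_l^{-1}\|\phi_l\|.
\end{equation*}
The finite overlap of nodal supports on level $k$ yields $\|\curl\phi_k\|^2\lesssim \sum_i \|\curl\phi_{k,i}\|^2 = a_k^2$. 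For the level-$l$ factor, I would use finite overlap again, $\|\phi_l\|^2\lesssim \sum_j \|\phi_{l,j}\|^2$, followed by a local Poincaré inequality on each patch $\Omega_{l,j}$ (diameter $\sim h_l$, and $\phi_{l,j}$ is compactly supported there): $\|\phi_{l,j}\|\lesssim h_l\|\nabla\phi_{l,j}\| = h_l\|\curl \phi_{l,j}\| = h_l\|v_{l,j}\|$. Consequently $h_l^{-1}\|\phi_l\|\lesssim b_l$, and the cross-level contribution obeys
\begin{equation*}
\sum_{k<l}\sum_{i,j}(u_{k,i},v_{l,j}) \lesssim \sum_{k<l}\left(\tfrac{1}{2}\right)^{l-k} a_k b_l.
\end{equation*}

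For the same-level part, Lemma \ref{lm:SCS} is not useful because $l-k=0$ gives no decay; instead the argument is purely local. For each vertex $x_{k,i}$, only $O(1)$ other vertices $x_{k,j}$ lie in a patch that overlaps $\Omega_{k,i}$, so $(u_{k,i},v_{k,j})=0$ unless $j$ is one such neighbor. A direct Cauchy--Schwarz and finite-overlap bookkeeping gives
\begin{equation*}
\sum_{i}\sum_{j>i}(u_{k,i},v_{k,j}) \lesssim a_k b_k.
\end{equation*}

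Combining the two bounds leaves a one-dimensional weighted double sum $\sum_{k\leq l}(1/2)^{l-k} a_k b_l$ (the same-level piece is absorbed by the $k=l$ term of the weighted sum). Applying the Schur test (or equivalently, Cauchy--Schwarz after pulling out $(1/2)^{(l-k)/2}$ twice and using $\sum_{m\geq 0}(1/2)^m<\infty$) yields $\sum_{k\leq l}(1/2)^{l-k}a_k b_l\lesssim \bigl(\sum_k a_k^2\bigr)^{1/2}\bigl(\sum_l b_l^2\bigr)^{1/2}$, which is exactly the desired estimate. The main obstacle I anticipate is the careful handling of the lumped-function step: ensuring the passage from the nodal sum $\sum_{i,j}(u_{k,i},v_{l,j})$ to the inner product $(\curl\phi_k,\curl\phi_l)$ is legitimate (it is, by bilinearity) and, more seriously, that the finite-overlap constants and the local Poincaré constant on a patch of size $h_l$ are genuinely $h$-independent; this is where one must carefully exploit shape-regularity of the quasi-uniform hierarchy $\mcal T_1\subset\cdots\subset\mcal T_J$.
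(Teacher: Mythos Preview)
Your proposal is correct and follows essentially the same route as the paper: the same split into the same-level part (handled by finite overlap) and the cross-level part $l>k$ (handled via Lemma~\ref{lm:SCS} and the equivalence $h_l^{-1}\|\phi_{l,j}\|\eqsim\|\curl\phi_{l,j}\|$), followed by a geometric-series Cauchy--Schwarz in the level indices. The only cosmetic difference is that you lump $\phi_k=\sum_i\phi_{k,i}$ and $\psi_l=\sum_j\psi_{l,j}$ before invoking Lemma~\ref{lm:SCS}, whereas the paper applies the lemma term-by-term to $(\phi_{k,i},\psi_{l,j})$; your lumped version makes the reduction to $\sum_{k\le l}(1/2)^{l-k}a_kb_l$ more transparent and avoids any ambiguity about how the quadruple sum collapses.
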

\begin{proof}
We can write $u_{k,i} = \curl \phi_{k,i}$ and $v_{l,j} = \curl \psi_{l,j}$ for some $\phi_{k,i}\in \mcal S_k, \psi_{l,j}\in \mcal S_l$. We split the summation $\sum_{(l,j)>(k,i)}$ into two parts $\sum_{l>k}\sum_{j=1}^{N_l}$ and $\sum_{l=k,j>i}^{N_k}$. For the first part, we apply Lemma \ref{lm:SCS} and note that $h_{l}^{-1}\|\psi_{l,j}\|\eqsim \|\curl \psi_{l,j}\|$ to get
\begin{align*}
\sum_{k=1}^J\sum_{i=1}^{N_k} \sum_{l>k}\sum_{j=1}^{N_l}(u_{k,i}, v_{l,j}) &= \sum_{k=1}^J\sum_{i=1}^{N_k} \sum_{l>k}\sum_{j=1}^{N_l}(\curl \phi_{k,i}, \curl \psi_{l,j})\\
& \leq \sum_{k=1}^J\sum_{i=1}^{N_k} \sum_{l>k}\sum_{j=1}^{N_l} \left (\frac{1}{2}\right )^{l-k}\|\curl \phi_{k,i}\|\|\curl \psi_{l,j}\|\\
&\lesssim \left (\sum_{k=1}^J\sum_{i=1}^{N_k}\|u_{k,i}\|^2 \right )^{1/2} \left (\sum_{l=1}^J\sum_{j=1}^{N_l}\|v_{l,j}\|^2\right )^{1/2}.
\end{align*}
For the second part, we use the finite overlapping property of finite element spaces. Namely, in the $k$th level, the index set $n_k(i) = \{j\in \{1, \ldots, N_k\}, \Omega_{k,i}\cap \Omega_{k,j} \neq \emptyset \}$ is finite. Then
\begin{align*}
\sum_{k=1}^J\sum_{i=1}^{N_k} \sum_{j>i}^{N_k}(u_{k,i}, v_{k,j}) &=\sum_{k=1}^J\sum_{i=1}^{N_k} \sum_{j\in n_k(i)}(u_{k,i}, v_{k,j})\\
&\lesssim \left (\sum_{k=1}^J\sum_{i=1}^{N_k}\|u_{k,i}\|^2 \right )^{1/2} \left (\sum_{l=1}^J\sum_{j=1}^{N_l}\|v_{l,j}\|^2\right )^{1/2}.
\end{align*}
\end{proof}

\subsection{Three dimensions}
We consider the same problem in three dimensions which is much more difficult than the two dimensional case. The reason is that the previous space $\mcal S$ is an edge element space and a stable multilevel decomposition for $\mcal S$ is non-trivial.

We again consider the lowest order case.  Now $\mcal S$ is the lowest order N\'ed\'elec edge element space~\cite{Nedelec.J1980,Nedelec.J1986} of $H_0(\curl,\Omega) :=\{v\in (L^2(\Omega))^3, \curl v \in (L^2(\Omega))^3, v\times n |_{\partial \Omega}= 0\}$, $\mcal V$ is the lowest order Raviart-Thomas element space of $H_0(\div,\Omega)$, and $\mcal P\subset L^2_0(\Omega)$ is the piecewise constant space. Furthermore let $\mcal U\subset H_0^1(\Omega)$ be the linear finite element space. We have the following exact sequence~\cite{Hiptmair.R2002,Arnold2006}
$$
0 \hookrightarrow \mcal U \stackrel{\grad}{\longrightarrow}\mcal S \stackrel{\curl}{\longrightarrow} \mcal V \stackrel{\div}{\longrightarrow} \mcal P \to 0.
$$
%For the implementation, we need to chose $\mcal V_{k,i}$ as the patch of edges and solve a local saddle point problem. 

To verify (SD), we need the following discrete regular decomposition for edge elements~\cite{Hiptmair.R;Xu.J2007}. In the sequel, the operator $\Pi^{\curl}_h$ is the canonical interpolation to $\mcal S$: for a smooth enough function $w$, $\Pi^{\curl}_h w \in S$ satisfying $\int_E \Pi^{\curl}_h w\cdot t \dd s = \int_E w\cdot t \dd s$ for all edges $E$ of $\mcal T_h$ where $t$ is a tangential vector of $E$. Similarly $\Pi^{\curl}_k$ is the canonical interpolation to $\mcal S_k$ on mesh $\mcal T_k$ for $k=1,\ldots, J$.

\begin{lemma}[Discrete Regular Decomposition~\cite{Hiptmair.R;Xu.J2007}]\label{th:disregdec}
For every $\phi \in \mcal S$, there exist $\tilde \phi \in \mcal S, w \in \mcal U^3$, and $\psi\in \mcal S\cap \ker(\curl)$ such that
\begin{gather}
\label{dec} \phi =  \tilde \phi + \Pi ^{\curl}_hw + \psi,\; \text{ and }\\
\label{stable} \|h^{-1}\tilde \phi\| + \|w\|_1\lesssim
  \|\curl \phi\|.
\end{gather}
\end{lemma}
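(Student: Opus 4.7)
The plan is to reduce the discrete regular decomposition to a continuous one combined with commuting quasi-interpolations. First I would embed $\phi \in \mcal S \subset H_0(\curl;\Omega)$ into the continuous setting and invoke the Birman--Solomyak/Hiptmair-type regular decomposition: there exist $q \in H_0^1(\Omega)$ and $z \in H_0^1(\Omega)^3$ with $\phi = \nabla q + z$ and $\|z\|_1 \lesssim \|\curl \phi\|$. The key gain over a plain Helmholtz split is that $z$ lies in the smoother space $H^1$ rather than merely in $H(\curl)$, with its full $H^1$-seminorm controlled by $\|\curl \phi\|$ alone; the part $\nabla q$ absorbs all of the kernel content of $\phi$.

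Second, I would approximate $z$ by a piecewise linear vector field $w \in \mcal U^3$ using a Scott--Zhang-type quasi-interpolant that preserves the zero boundary trace and satisfies the standard estimates
$$\|z - w\| \lesssim h\|z\|_1, \qquad \|w\|_1 \lesssim \|z\|_1.$$
Set $\psi := \Pi_h^{\curl}\nabla q \in \mcal S \cap \ker(\curl)$, which lies in $\ker(\curl)$ by the commuting diagram $\Pi_h^{\curl}\nabla = \nabla\Pi_h^{\grad}$ (with a suitable quasi-interpolant onto the scalar Lagrange space $\mcal U$). Finally define
$$\tilde \phi := \phi - \Pi_h^{\curl}w - \psi.$$

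Third, I would verify the stability bound. Since $\phi \in \mcal S$ is fixed by $\Pi_h^{\curl}$ (or by the corresponding quasi-interpolant on the discrete subspace), the commuting property yields
$$\tilde \phi = \Pi_h^{\curl}(\nabla q + z) - \Pi_h^{\curl}w - \Pi_h^{\curl}\nabla q = \Pi_h^{\curl}(z - w).$$
Applying the $L^2$-stability of $\Pi_h^{\curl}$ on $H^1$ functions (with the local trace estimates) gives
$$\|\tilde \phi\| \lesssim \|z - w\| + h\|z - w\|_1 \lesssim h\|z\|_1 \lesssim h\|\curl \phi\|,$$
so $\|h^{-1}\tilde \phi\| \lesssim \|\curl \phi\|$, while $\|w\|_1 \lesssim \|z\|_1 \lesssim \|\curl \phi\|$ is immediate from the continuous decomposition and Scott--Zhang stability.

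The main obstacle is that the canonical Nédélec interpolant $\Pi_h^{\curl}$ is not bounded on $H^1$ in three dimensions without extra integrability, so a direct use of the identity $\Pi_h^{\curl}\phi = \phi$ on $z - w$ is illegal. I would bypass this by replacing $\Pi_h^{\curl}$ in the definition of $\tilde \phi$ with a Hiptmair--Xu quasi-interpolant $\pi_h^{\curl}$ that is $L^2$-bounded on $H^1$, is the identity on the discrete subspace $\mcal S$, and commutes with $\nabla$ through a scalar quasi-interpolant onto $\mcal U$. This commuting triple is exactly the technology needed to make the telescoping identity rigorous while preserving the $H_0(\curl)$ boundary condition throughout; the same argument is what underlies \cite{Hiptmair.R;Xu.J2007} for the lowest-order edge element.
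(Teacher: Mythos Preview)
The paper does not supply a proof of this lemma; it is quoted directly from \cite{Hiptmair.R;Xu.J2007} and used as a black box in the subsequent multilevel decomposition. Your sketch is essentially the argument of that reference: start from a continuous regular decomposition $\phi = z + \nabla q$ with $z \in H_0^1(\Omega)^3$ and $\|z\|_1 \lesssim \|\curl\phi\|$, approximate $z$ by a Scott--Zhang interpolant $w \in \mcal U^3$, and use a commuting quasi-interpolant to push everything into $\mcal S$ while controlling the high-frequency remainder $\tilde\phi$. Your identification of the main obstacle---that the canonical edge interpolant $\Pi_h^{\curl}$ is not bounded on $H^1$ in three dimensions---and your proposed fix via an $L^2$-bounded commuting quasi-interpolant $\pi_h^{\curl}$ that is the identity on $\mcal S$ and satisfies $\pi_h^{\curl}\nabla = \nabla \pi_h^{\grad}$ are both correct and are precisely the technology of \cite{Hiptmair.R;Xu.J2007}.

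One small closing step you should make explicit: the fix produces a decomposition with $\pi_h^{\curl} w$ in place of the canonical $\Pi_h^{\curl} w$ that appears in the stated lemma. Since $w \in \mcal U^3$ is continuous piecewise linear, $\Pi_h^{\curl} w$ is well defined, and the standard elementwise approximation estimates for both interpolants give $\|\pi_h^{\curl} w - \Pi_h^{\curl} w\| \lesssim h\|w\|_1 \lesssim h\|\curl\phi\|$. Absorbing this difference into $\tilde\phi$ recovers the statement exactly as written. With that adjustment your argument is complete and matches the cited source.
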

%\begin{remark}
%In the original version of discrete regular decomposition, the norm $\psi$
%\end{remark}
%\begin{proof}
%For $\phi\in H(\curl;\Omega)$, let $u=\curl \phi\in
%[L^2(\Omega)]^3$. Since $\div \curl \phi =0$, we can apply Theorem \ref{th:curldiv} in ~\cite{Girault.V;Raviart.P1986} to obtain $w \in [H^1(\Omega)]^3$ such that
% $$
%\curl w = u = \curl\phi, \hbox{ in } \Omega,
%$$
%%
%and
%%
%$$
%\|w\|_1 \lesssim \|u\| \le \|\curl \phi\|.
%$$
%\end{proof}

In the decomposition \eqref{dec}, $\psi \in \ker(\curl)$ and there is no need to control the norm of $\psi$. The component $\tilde \phi$ is of high frequency and the component $w\in \mcal U^3$ for which a stable multilevel decomposition for the linear finite element can be applied. The following decomposition can be found in~\cite{Xu.J;Chen.L;Nochetto.R2009}. 

\begin{lemma}\label{th:curldec}
For every $\phi \in \mcal S$, there exist $\tilde \phi\in \mcal S$, $w_k\in \mcal U_k^3$, and $\psi \in \mcal S\cap \ker(\curl)$ such that
\begin{gather}
\label{muldeccurl}
\phi =  {\tilde \phi} + \sum _{k=1}^J \Pi_k^{\curl}w_k + \psi,\quad \text{and }\\
\label{mulstablecurl}
\|h^{-1}\tilde \phi\|^2 + \sum _{k=1}^Jh_k^{-1} \|w_k\|^2\lesssim \|\curl \phi\|^2.
\end{gather}
\end{lemma}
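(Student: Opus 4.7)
The plan is to combine the single-level discrete regular decomposition of Lemma \ref{th:disregdec} with the classical stable multilevel decomposition of the scalar Lagrange space $\mcal U$, then push the latter through the canonical N\'ed\'elec interpolation. The scalar multilevel theory for $\mcal U$ is entirely standard (Xu 1992), so the task reduces to controlling the commutator errors that arise when $\Pi_h^{\curl}$ is replaced level-by-level by $\Pi_k^{\curl}$, and lumping those errors into a refined high-frequency term $\tilde\phi$.

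First I would apply Lemma \ref{th:disregdec} to produce an initial splitting $\phi = \tilde\phi_0 + \Pi_h^{\curl} w + \psi$ with $\psi \in \mcal S \cap \ker(\curl)$, $\tilde\phi_0\in \mcal S$, $w\in \mcal U^3$, and $\|h^{-1}\tilde\phi_0\| + \|w\|_1 \lesssim \|\curl\phi\|$. Next, I would decompose the $H^1$-conforming piece $w$ in the standard way: letting $Q_k$ denote the $L^2$-projection onto $\mcal U_k$ (applied component-wise, with $Q_0 = 0$), set $w_k := (Q_k - Q_{k-1})w \in \mcal U_k^3$. Then $w = \sum_{k=1}^J w_k$ with the well-known stability
$$
\sum_{k=1}^J h_k^{-2} \|w_k\|^2 \; \lesssim \; \|w\|_1^2 \; \lesssim \; \|\curl\phi\|^2,
$$
which furnishes the required control on $\sum_k h_k^{-1}\|w_k\|^2$ (indeed with a stronger weight).

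The crux is to convert $\Pi_h^{\curl}\sum_k w_k$ into $\sum_k \Pi_k^{\curl} w_k$ without losing stability. I would write
$$
\Pi_h^{\curl} w \;=\; \sum_{k=1}^J \Pi_k^{\curl} w_k \;+\; \sum_{k=1}^J (\Pi_h^{\curl} - \Pi_k^{\curl}) w_k,
$$
and absorb the commutator sum by setting $\tilde\phi := \tilde\phi_0 + \sum_{k=1}^J (\Pi_h^{\curl} - \Pi_k^{\curl}) w_k$. The identity \eqref{muldeccurl} is then immediate; the unknown to be controlled is $\|h^{-1}\tilde\phi\|$, which by the triangle inequality reduces to estimating each commutator term $(\Pi_h^{\curl} - \Pi_k^{\curl}) w_k$ in a weighted $L^2$-norm.

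The main obstacle lies exactly in this last bound: one must show that $\|h^{-1}(\Pi_h^{\curl} - \Pi_k^{\curl}) w_k\| \lesssim h_k^{-1} \|w_k\|$, so that summation in $k$ together with the scalar multilevel estimate recovers $\|\curl\phi\|^2$. This requires sharp interpolation error estimates for the canonical N\'ed\'elec projection applied to piecewise-linear inputs from $\mcal U_k^3$, namely an estimate of the form $\|(\Pi_h^{\curl} - \Pi_k^{\curl}) v\|_{0,T} \lesssim h_k \|\nabla v\|_{0,T}$ on each coarse element $T \in \mcal T_k$, combined with an inverse inequality on the fine mesh to convert the $L^2$-bound into an $h^{-1}$-weighted bound. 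The $L^2$-stability of $w_k = (Q_k - Q_{k-1})w$ together with $\|\nabla w_k\| \lesssim h_k^{-1}\|w_k\|$ then closes the estimate. Everything else (treatment of $\psi$, nested compatibility of the interpolations, summation over $k$) is routine once this interpolation-commutator estimate is in place.
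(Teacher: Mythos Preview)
Your overall strategy---apply the single-level regular decomposition, split $w$ via the scalar multilevel machinery, and manage the resulting commutator---is the right architecture, and it is essentially the route taken in the reference~\cite{Xu.J;Chen.L;Nochetto.R2009} that the paper cites in lieu of a proof. However, the place where you say ``the main obstacle lies'' is exactly where the argument, as written, breaks.

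The bound you aim for, $\|h^{-1}(\Pi_h^{\curl}-\Pi_k^{\curl})w_k\|\lesssim h_k^{-1}\|w_k\|$, does not follow from the local estimate $\|(\Pi_h^{\curl}-\Pi_k^{\curl})v\|_{0,T}\lesssim h_k\|\nabla v\|_{0,T}$ that you propose. That local estimate is sharp (the dominant error comes from $\Pi_k^{\curl}$, not $\Pi_h^{\curl}$), and after multiplying by $h^{-1}=h_J^{-1}$ and using the inverse inequality $|w_k|_1\lesssim h_k^{-1}\|w_k\|$ you obtain $h_J^{-1}\|w_k\|$, not $h_k^{-1}\|w_k\|$. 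The discrepancy is a factor $h_k/h_J=2^{J-k}$, which blows up for coarse levels and cannot be summed. Concretely, take $w_k|_T=(x_1,0,0)$ on a coarse tetrahedron $T$: this linear field is not in the lowest-order N\'ed\'elec space, so $\|w_k-\Pi_k^{\curl}w_k\|_{0,T}\sim h_k|w_k|_{1,T}$, and your chain of inequalities cannot recover the missing factor.

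The fix is not to sharpen the estimate but to relocate the commutator. Observe that $(\Pi_h^{\curl}-\Pi_k^{\curl})w_k$ is \emph{curl-free}. Indeed, by the commuting-diagram property,
\[
\curl\bigl(\Pi_h^{\curl}-\Pi_k^{\curl}\bigr)w_k=\bigl(\Pi_h^{\div}-\Pi_k^{\div}\bigr)\curl w_k,
\]
and since $w_k\in\mcal U_k^3$ is continuous piecewise linear on $\mcal T_k$, the field $\curl w_k$ is piecewise constant with continuous normal component, hence lies in the lowest-order Raviart--Thomas space on both $\mcal T_k$ and $\mcal T_h$; both interpolants reproduce it exactly and the right-hand side vanishes. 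Therefore $\sum_k(\Pi_h^{\curl}-\Pi_k^{\curl})w_k\in\mcal S\cap\ker(\curl)$ and should be absorbed into $\psi$, where no norm control is required, rather than into $\tilde\phi$. With this correction you may simply take $\tilde\phi=\tilde\phi_0$, and \eqref{mulstablecurl} follows immediately from Lemma~\ref{th:disregdec} together with the standard scalar estimate $\sum_k h_k^{-2}\|w_k\|^2\lesssim\|w\|_1^2$.
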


\begin{theorem}
For every $v\in \mcal V\ \cap \ \ker(\div)$, there exists a decomposition $v = \sum_{k=1}^J\sum_{i=1}^{N_k} v_{k,i}$ such that 
\begin{equation}\label{muldec}
\sum_{k=1}^J\sum_{i=1}^{N_k} \|v_{k,i}\|^2\lesssim \|v\|^2.
\end{equation}
\end{theorem}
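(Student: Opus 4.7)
The plan is to lift the problem to the edge-element potential via exactness, apply the multilevel regular decomposition of Lemma \ref{th:curldec} as a macro splitting, and then perform a nodal micro splitting at each level. Since $v\in\mcal V\cap\ker(\div)$ and the sequence $\mcal S\stackrel{\curl}{\to}\mcal V\stackrel{\div}{\to}\mcal P$ is exact, there exists $\phi\in\mcal S$ with $v=\curl\phi$ and $\|v\|=\|\curl\phi\|$. Lemma \ref{th:curldec} then produces
\begin{equation*}
\phi = \tilde\phi + \sum_{k=1}^J \Pi_k^{\curl} w_k + \psi, \qquad \psi\in\ker(\curl),
\end{equation*}
with the multilevel stability \eqref{mulstablecurl}. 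Taking curl annihilates $\psi$ and, via the commuting identity $\curl\Pi_k^{\curl}=\Pi_k^{\div}\curl$, yields the macro splitting
\begin{equation*}
v = \curl\tilde\phi + \sum_{k=1}^{J}\curl(\Pi_k^{\curl}w_k),
\end{equation*}
whose $k$th summand lies in the divergence-free subspace of $\mcal V_k$.

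Next, I would perform the micro splitting at each level by expanding in the N\'ed\'elec edge basis of $\mcal S_k$. Writing $\Pi_k^{\curl}w_k = \sum_{i=1}^{N_k}\alpha_{k,i}\Phi_{k,i}$ with edge moments $\alpha_{k,i} = \int_{E_{k,i}} w_k\cdot t\,ds$, I set
\begin{equation*}
v_{k,i} := \alpha_{k,i}\,\curl\Phi_{k,i}\in\mcal K_{k,i}.
\end{equation*}
At the finest level, I would additionally expand $\tilde\phi$ in the edge basis of $\mcal S_J$ and fold those one-dimensional contributions into the $k=J$ slot. By construction $\sum_i v_{k,i}$ is exactly the level-$k$ macro piece, so summing over $k$ recovers $v$.

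The key remaining work is a level-wise estimate of the form $\sum_{i=1}^{N_k}\|v_{k,i}\|^2\lesssim h_k^{-1}\|w_k\|^2$, together with an analogous bound for the $\tilde\phi$-contribution at level $J$ controlled by $\|h^{-1}\tilde\phi\|^2$. These follow from three routine ingredients on shape-regular 3D meshes: the scalings $\|\Phi_{k,i}\|^2\eqsim h_k$ and $\|\curl\Phi_{k,i}\|^2\eqsim h_k^{-1}$ of the lowest order N\'ed\'elec basis, the polynomial trace--inverse estimate $\|w_k\|_{L^2(E)}^2\lesssim h_k^{-2}\|w_k\|_{L^2(\omega_E)}^2$, and finite overlap of the edge patches. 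Combining them with \eqref{mulstablecurl} gives the desired bound $\sum_{k,i}\|v_{k,i}\|^2\lesssim\|v\|^2$. The main obstacle is precisely this $h_k$-bookkeeping: the inverse inequality needed to pass from the $\Phi_{k,i}$-moments of $w_k$ to the norms of $\curl\Phi_{k,i}$ must be absorbed exactly by the $h$-weights in \eqref{mulstablecurl}. Unlike the two-dimensional case, where $\phi\in H^1$ and a straightforward $L^2$-projection hierarchy suffices, here $\phi$ sits only in the edge-element space $\mcal S\subset H(\curl)$, so the Hiptmair--Xu regular decomposition together with the resulting $\Pi_k^{\curl}$ interpolants is unavoidable, and the argument cannot be reduced to a duality or full-regularity one.
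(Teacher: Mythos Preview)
Your proposal is correct and follows essentially the same route as the paper: lift $v$ to an edge-element potential $\phi$ via exactness, apply the multilevel discrete regular decomposition of Lemma~\ref{th:curldec}, expand $\tilde\phi$ and each $\Pi_k^{\curl}w_k$ in the N\'ed\'elec basis of $\mcal S_k$, set $v_{k,i}=\curl\phi_{k,i}$, and close with the inverse inequality plus $L^2$-stability of the basis decomposition and \eqref{mulstablecurl}. One bookkeeping remark: with the scalings you quote ($\|\curl\Phi_{k,i}\|^2\eqsim h_k^{-1}$ and $\|w_k\|_{L^2(E)}^2\lesssim h_k^{-2}\|w_k\|_{L^2(\omega_E)}^2$) the level-wise bound actually comes out as $\sum_i\|v_{k,i}\|^2\lesssim h_k^{-2}\|w_k\|^2$, not $h_k^{-1}$; this matches the paper's chain $\sum_{k,i}\|v_{k,i}\|^2\lesssim\sum_k h_k^{-2}\sum_i\|\phi_{k,i}\|^2$, and the $h_k^{-1}$ appearing in \eqref{mulstablecurl} should be read as $h_k^{-2}$.
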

\begin{proof}
For $v\in \mcal V\cap \ \ker(\div)$, there exists $\phi\in \mcal S$ such that $v = \curl \phi$. We then apply Lemma \ref{th:curldec} to obtain a decomposition of $\phi$ in the form of \eqref{muldeccurl}. We can write the first two terms in \eqref{muldeccurl} into multilevel basis decomposition, i.e.,
\begin{equation}\label{3Ddec}
{\tilde \phi} + \sum _{k=1}^J \Pi_k^{\curl}w_k = \sum_{k=1}^J\sum_{i=1}^{N_k} \phi_{k,i}.
\end{equation}
Decomposition of $v$ is obtained by choosing $v_{k,i} = \curl \phi_{k,i}$. The stability \eqref{muldec} is from the inverse inequality, the stability of bases decomposition of edge element spaces in $L^2$-norm, and the stability of the decomposition \eqref{mulstablecurl}:
$$
\sum_{k=1}^J\sum_{i=1}^{N_k} \|v_{k,i}\|^2\lesssim \sum_{k=1}^Jh_k^{-2}\sum_{i=1}^{N_k} \|\phi_{k,i}\|^2\lesssim \|h^{-1}\tilde \phi\|^2 + \sum _{k=1}^Jh_k^{-1} \|w_k\|^2\lesssim \|\curl \phi\|^2. 
$$
\end{proof}
The (SCS) can be proved similarly as in the two dimensional case.

\subsection{Numerical examples} In this subsection we present two numerical examples to support our theory. We perform the numerical experiments using the $i$FEM package \cite{Chen.L2008c}.

We consider four examples on the Darcy equations
$$
K^{-1} u + \nabla p = 0, \quad \div u = f \quad \text{ in } \Omega
$$
with given flux boundary condition $u\cdot n = g$ on $\partial \Omega$. We chose $\Omega = (0,1)^2$. Since we focus on the performance of solvers, we only specify the tensor $K$ used in these examples. 

\begin{itemize}
 \item Example 1. $K$ is the identity $2\times 2$ matrix, i.e., $Id_{2\times 2}$ and the grid is uniform. 
 \item Example 2. The grid is still uniform but the tensor is non-diagonal
 $$
 K = 
 \begin{pmatrix}
 1 + 4(x^2+y^2) & 3 xy\\
 3xy &  1 + 11(x^2+y^2)
\end{pmatrix}.
 $$ 
This is the Example 5.2 considered in \cite{Rusten1992}. The spectrum of $K$ is in $[1,25]$ and thus contains certain anisotropy. 

 \item Example 3. The tensor $K = a(x)Id_{2\times 2}$ with piecewise constant $a(x)$ on the initial $4\times 4$ uniform partition of $\Omega$. The scalar function $a = 10^{-p}$ where $p$ is a random integer such that $0\leq p \leq 5$.
 
 \item Example 4. The same tensor in Example 3, except the initial grid is distorted; see Fig. \ref{fig:mesh} (b). The interior grid points are randomly perturbed by up to $40\%$ of the mesh size $h=1/4$. Example 3 and 4 are two dimensional version of the example used in \cite{Wilson2009}.
\end{itemize}

\begin{figure}[htbp]
\label{fig:mesh}
\subfigure[The uniform mesh with $h=1/4$]{
\begin{minipage}[t]{0.5\linewidth}
\centering
\includegraphics*[width=3cm]{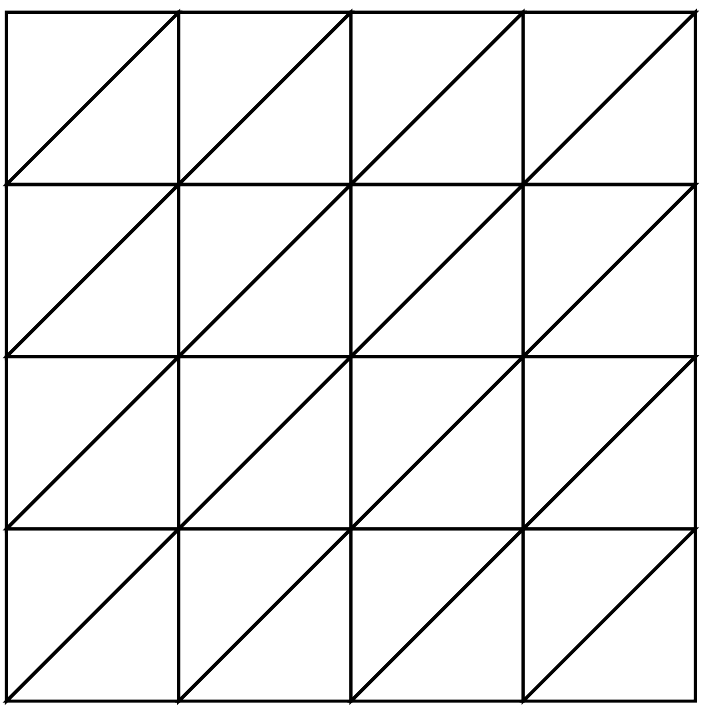}
\end{minipage}}%%
\subfigure[A distorted mesh]
{\begin{minipage}[t]{0.5\linewidth}
\centering
\includegraphics*[width=3cm]{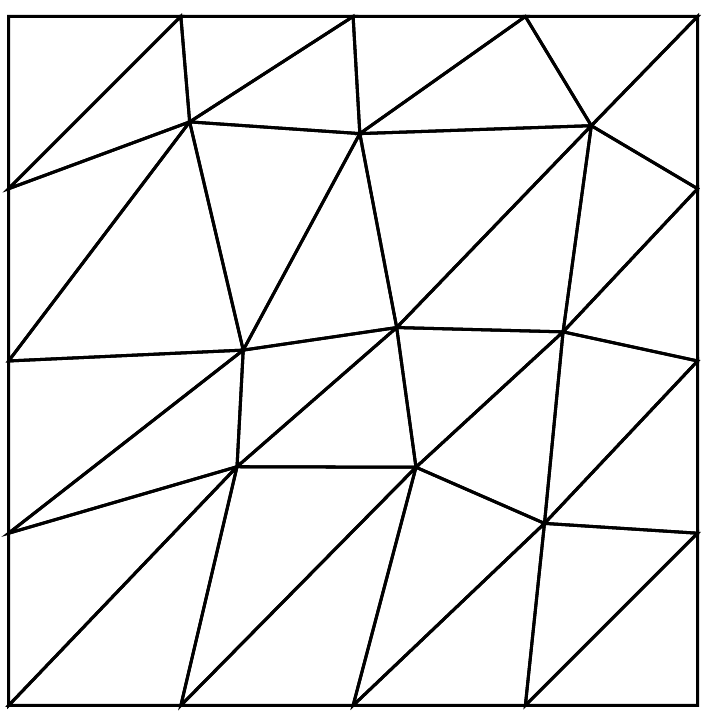}
\end{minipage}}
\caption{The initial mesh of Example 1 - 3 is the uniform mesh in (a) and the initial mesh of Example 4 is a distorted mesh in (b).}
\end{figure}

We discretize the Darcy equations using the lowest order $RT$ element and apply the SSO method with the decomposition \eqref{RTdec}. We implement SSO in a V-cycle formulation and perform only one pre-smoothing and one post-smoothing. The smoother is an overlapping multiplicative Schwarz smoother requiring solving a small saddle point system in the patch of each vertex. The local problem is solved exactly as the dimension of the local problem is small and admit a very efficient direct solver described below. Let $n_i$ denote the number of edges connected to the $i$-th vertex patch. If we orientated these interior edges with normal direction counterclockwise, then locally the divergence free basis is represented by the constant vector $(1,\ldots, 1)_{n_i\times 1}^T$. Note that the local mass matrix $M$ is tridiagonal, given a residual vector, the local problem can be solved in $4n_i$ addition and one division (no multiplication required as the divergence basis corresponds to a constant vector). Let $N$ be the number of interior vertices. The total cost of the local solver is thus $4\sum_{i=1}^Nn_i$. In average $n_i \approx 6$ and thus the cost is around $24 N$. On the other hand, the size of the saddle point system is the number of interior edges plus the number of triangles, which is around $5N$, and the number of non zeros of this matrix is around $21N$. A matrix-vector product thus requires $21N$ multiplication which is way costly than the $24N$ addition needed in the local solvers. Similar calculation holds for the 3D local problem with different constant. We conclude that the dominated cost of the smoother will be the evaluation of the residual and one step of the smoother requires just one matrix-vector product. 

We stop the iteration when an approximated relative error in the energy norm is less than or equal to $10^{-8}$. Let $r$ be the current residual of the iterate $u$ and $Br$ is the correction obtained by one V-cycle. Then we use the error formulae $\sqrt{(Br,r)/(u,f)}$ which is better than using the relative residual error as $B\approx A^{-1}$. We report iteration steps of V-cycle required for the four examples. We did not include the CPU time since it depends on the implementation and testing environment: the programming language, optimization of codes, and the hardware (memory and cache), etc. The operation count we did before indicates that our method can be implemented very efficiently.

\begin{table}[htdp]
\caption{Iteration steps of V-cycle multigrid for the saddle point system with $1$ pre-smoothing and $1$ post-smoothing step. Stopping criterion  is the approximated relative error is less than $10^{-8}$.}
\begin{center}
\begin{tabular}{cccccc}
\hline \hline
$h$ & size & Ex 1 & Ex 2 & Ex 3 & Ex 4\\
\hline
1/8 & 336 & 10 & 13 & 7 & 20\\
\hline
1/16 & 1,312  & 11 & 15 & 10 & 19\\
\hline
1/32 & 5,184 & 11 & 16 & 13 & 25\\
\hline
1/64 & 20,608 & 11 & 16 & 13 & 25\\
\hline \hline
\end{tabular}
\end{center}
\label{tab:iteration}
\end{table}%

Based on the numerical results in Table \ref{tab:iteration}, we conclude that our multigrid method is convergent uniformly to the mesh size and pretty robust to the variation of the tensor $K$ and the distortion of meshes. 

A popular Uzawa type preconditioned conjugate gradient (PCG) method for solving the Schur complement $BM^{-1}B^T$ equation requires the evaluation of $M^{-1}$ (the so-called inner iteration) for each PCG iteration (the so-called outer iteration) and an effective preconditioner for the Schur complement. As noticed in  \cite{Bramble.J;Pasciak.J1988a,Rusten1992}, the inner iteration of computing $M^{-1}$ should be very accurate and thus the overall inner-outer iteration process is costly. And in  \cite{Wilson2009}, it is shown that preconditioners for $BM^{-1}B^T$ should be tuned to the variation of the tensor and the distortion of the mesh. Better preconditioned iterative methods have been developed in~\cite{Bramble.J;Pasciak.J1988a,Rusten1992}.

\section{Application to Non-conforming Methods}
In this section, we use the equivalence between non-conforming methods and mixed methods to develop a V-cycle multigrid method for non-conforming methods and prove its uniform convergence. The two ingredients of our new multigrid method for non-conforming methods are: the overlapping Schwarz smoothers, and inter-grid transfer operators through the nested flux spaces. 

%We thus solve the longstanding question on the robust multigrid methods for non-conforming method. The success is due to the overlapping Schwarz smoothers and prolongation and restriction operators through the nested flux spaces. 
%\mnote{ importance of the result.}

Again we consider the Poisson equation with Neumann boundary condition $-\Delta p = f$ in $\Omega$ with $\partial _{n}p|_{\partial \Omega} = 0$. Based on a triangulation $\mcal T_h$ of $\Omega$, the Crouzeix-Raviart (CR) non-conforming finite element space \cite{CrouzeixRaviart1973} is defined as follows
\begin{equation*}
\Lambda_h = \{\lambda |_{T}\in \mcal P_1(T), \forall\,   T\in \mcal T_h, \int _{E}\lambda \dd s \text{ is continuous for all sides }  E \text{ of } \mcal T_h\}.
\end{equation*}

The space $\Lambda_h$ is not a subspace of $H^1(\Omega)$ due to the loss of  continuity across the sides of elements. An elementwise gradient operator $\nabla _h$ is defined as
$$
(\nabla _h \lambda) |_{T} := \nabla (\lambda |_{T})\quad \forall\,  T\in \mcal T_h,
$$
and the bilinear form is defined as
$$
(\nabla _h \lambda, \nabla _h \mu ) := \sum _{T\in \mcal T_h}\int _{T} \nabla_h \lambda \cdot \nabla_h \mu\, \dx \quad \text{ for all } \lambda, \mu \in \Lambda_h.
$$
The CR non-conforming finite element discretization is as follows: Given an $f\in L^2(\Omega)$, find $\lambda \in \Lambda_h\cap L_0^2(\Omega)$ such that
\begin{equation}\label{cr}
(\nabla _h \lambda, \nabla _h \mu ) = (f, \mu), \quad \text{for all } \mu \in \Lambda_h.
\end{equation}

%When $v_h\in \Lambda_h$, $(\nabla _hv_h, \nabla _hv_h)=0$ implies $v_h$ is piecewise constant. By the boundary condition and the continuity of the boundary integral, we conclude $u_h$ is zero. Therefore $(\nabla _h\cdot, \nabla _h\cdot)$ is an inner product on $\Lambda_h$. 

Let $u\in \mcal V$ be the mixed finite element approximation of the flux using the lowest order RT element. It is well known that~\cite{Marini.L1985}, for every $T\in \mcal T_h$,
\begin{equation}\label{localformula}
u |_{T} = -\nabla _h\lambda |_{T} + \frac{1}{d} f_{T}(x-x_{T}), \quad \forall\,  x\in T,
\end{equation}
where $x_{T}$ is the barycenter of the triangle $T$ and $f_T$ is the average of $f$ over $T$. Throughout this section we shall always consider a piecewise constant function $f$. We always denote by $u$ the solution to \eqref{mixedpoisson0} and by $\lambda$ the solution to \eqref{cr}. 
%The $k$th iteration of $(u,\lambda)$ will be denoted by $(u^k, \lambda^k)$.

We note that such equivalence has been used to design multigrid methods for mixed methods with the help of non-conforming methods~\cite{Brenner.S1992,Chen.Z1996}. We are exploiting this equivalence in the other way around. 

Based on a sequence of hierarchy meshes, we will have a sequence of spaces $\Lambda_1, \Lambda_2 ,\ldots ,$ $\Lambda_J = \Lambda_h$. We shall develop a V-cycle multigrid method for solving the equation \eqref{cr} on the finest level. The notorious difficulty is the non-nestedness of hierarchies of non-conforming finite element spaces. Robust inter-grid operators (restriction and prolongation operators) should be designed carefully~\cite{Brenner.S1989,Braess.D;Verfurth.R1990,Oswald.P1997,Chen.Z;Oswald.P1998,Brenner.S1999,Kang.K;Lee.S1999,Koster.M;Ouazzi.A;Schieweck.F;Turek.S2012}. The existing convergence proof of multigrid methods for non-conforming methods~\cite{Brenner.S1989,Brenner.S1999,Oswald.P1997,Brenner2003a,Oswald.P2008} do not cover V-cycles with few smoothing steps but for multigrid cycles with sufficiently many smoothing steps. We shall design a V-cycle multigrid method for CR element and prove its convergence even for only one smoothing step.

Essentially our method is just a different interpretation of the SSO method applied to the mixed finite element discretization. 
%
%For each local saddle point problem in the mixed form, we will transfer to a SPD form using non-conforming element discretation and thus obtain a smoother of the non-conforming problem. The transfer operators are applied to the flux approximation used in the mixed formulation.
%
Therefore during the iteration, we always keep two quantities  $( u^k, \lambda^k )$ which is the $k$th iteration of $(u,\lambda)$. 
%In the finest level, the flux $u^k = u^k(\lambda^k; f)$ can be constructed from $\lambda^k$ and $f$ by the \eqref{localformula}. 

The smoother in the finest level is an overlapping Schwarz smoother with Neumann boundary condition. It consists of solving a local problem $-\Delta \lambda^{k}_i = f$ in $\Omega_i$ with Neumann boundary condition $\partial _{n}\lambda^{k}_i|_{\partial \Omega_i} = u^k_{i-1}\cdot n$, where $\Omega_i$ is the patch of the $i$th vertex. Here we loop over the vertex $i=1,\ldots, N$ of $\mcal T_h$ and use subscript $i$ to denote the iteration at the $i$th vertex. We set $u^k_0 = u^k$ and $u^{k+1} = u^k_{N}$. Once $\lambda^{k}_i$ is computed, it will be used to update the flux $u^k_i$ by the relation \eqref{localformula} since the relation holds for the local problem as well. To begin with, we need to compute flux $u_*$ on the finest level such that the local Neumann problem is well defined, i.e, the source $f$ is compatible with the prescribed boundary flux. Such flux $u_*$ can be found by a V-cycle multigrid iteration similar to the procedure for non-homogenous  constraint discussed before. 

In the implementation level, the matrix of  local problems can be obtained by extracting sub-matrices of the global one. The right-hand side is the corresponding components of $f$ plus the contribution from the boundary condition. 
%It can be thought as a way of updating the residual of the local problem. 
It is the degree of freedom $\int _E u\cdot n$ that enters the computation which can be calculated by the formulae
\begin{equation}\label{unlambda}
\int _E u^k_i\cdot n_E \dd s = \frac{|T|}{d+1}f + \int _E\nabla _h \lambda^k_i\cdot n_e \, \dd s.
\end{equation}
The relation \eqref{unlambda} can be used to eliminate the flux and get a direct updated formulation $\lambda^k_{i-1} \to \lambda^k_{i}$ without recording the flux approximation $u^k_i$. Algebraically it can be realized by matrix multiplication of $\lambda^k_i$. Conceptually it is better to record the flux explicitly.

We then discuss the prolongation from the coarse grid to the fine grid. Since now only two levels are involved, we will follow the convention to use subscript $(\cdot )_H$ for quantities in the coarse grid and $(\cdot)_h$ for that in the fine grid.
In the coarse grid, we will solve a residual equation to be considered in a moment. 
%We still maintain the pair $(\lambda_H, e_H)$, where $e_H$ is a correction of the flux. 
Suppose we have obtained a correction of the flux $e_H$, we prolongate $e_H$ in the RT space in the coarse grid to that in the fine grid and denoted by $I_H^he_H$. Note that although spaces of CR non-conforming elements are non-nested, the RT spaces for flux are, and $I_H^h$ is just the natural inclusion. The correction is applied to the flux $u_h \leftarrow u_h + I_H^he_H$. With the updated flux, we have different boundary conditions for the local problems (the source is always $f$ in the finest level) and the smoother in the finest level can be applied again.

We then discuss in detail the residual equation to be solved in the coarse grid. We first describe the restriction. 
Denoted by $u_h$ the current approximation of flux in the fine grid. The residual equation of the corresponding mixed method in the fine grid is 
\begin{equation}\label{mixedresidual}
\begin{pmatrix}
M & B^T\\
B & O
\end{pmatrix}
\begin{pmatrix}
e_h\\
p_h
\end{pmatrix}
=
\begin{pmatrix}
-Mu_h\\
0
\end{pmatrix}.
\end{equation}
So the restriction operator will apply to the residual $-Mu_h$, i.e., $r_H = -(I_H^h)^TMu_h$. 
On the coarse grid, we will still solve local problems on vertex patches. We start with the zero initial guess of flux $e_H$, i.e., $e_{H,0} = 0$, and solve local problems to update $e_{H,i}$ patch-wise for $i=1,\ldots, N_H$. The updated flux $e_{H,i}$ in patch $\Omega_{H,i}$ will provide a boundary condition for the next patch $\Omega_{H,i+1}$. To use the non-conforming formulation, we need to figure out the source data for each local problem. This can be done as follows. Let $r_H^i$ be the restriction of $r_H$ to the $i$th patch $\Omega_{H,i}$, and let $M_H^i$ be the corresponding mass matrix. Then the source for the local problem on $\Omega_{H,i}$ will be given by $\delta f_{H,i} = -\div M_{H,i}^{-1}r_H^i$ which is piecewise constant on $\Omega_{H,i}$. The inverse $M_{H,i}^{-1}$ can be computed efficiently since $M_{H,i}$ is tri-diagonal. Now we can solve the non-conforming discretization of the problem $-\Delta_H \lambda_{H,i} = \delta f_H$ with Neumann boundary condition $\partial_n \lambda_{H,i}|_{\partial \Omega_{H,i}} = e_{H,i-1}\cdot n$ and use $\lambda_{H,i}$ to update the flux correction $e_{H,i}$. Again such procedure can be implemented as one matrix multiplication which leads to a non-trivial restriction matrix.

As usual, a V-cycle multigrid method is obtained by applying the above two-level method recursively to the coarse grid problem.

Convergence of this multigrid algorithm is striaghtforward since it is just a different way to compute the same solution of the mixed formulation for each local problem. The quantity $2(E(u^k) - E(u)) = \| u - u^k\|^2 = \| \nabla _h \lambda - \nabla _h \lambda^k\|^2$ since the relation \eqref{localformula} always holds during the iteration. 

The same algorithm and convergence proof can be applied to other non-conforming methods, e.g., hybridized discontinuous Galerkin (HDG) methods~\cite{Arnold.D;Brezzi.F1985,Cockburn.B;Gopalakrishnan.J2004,CockburnGopalakrishnanLazarov2009} and weak Galerkin (WG) method~\cite{WangYe2012,MuWangYe2012polygon}, which are equivalent to the mixed methods. The only difference is the relation of $\lambda$ and the flux $u$. For example, for WG, we can simply use the following formulae to update the flux: $u = \nabla_w \lambda,$ where $\nabla_w$ is the weak gradient operator.

We thus have obtained a V-cycle multigrid method for non-conforming finite elements and have proved the uniform convergent with even one smoothing step. Such results are very rare in literature and a recent work on a multigrid method for HDG methods with only one smoothing step can be found in~\cite{Hall2013}.

\section{Application to Stokes Equations}
In this section, we apply our approach to designing a multigrid method for a discrete Stokes system in two dimensions and prove its uniform convergence. 

Let $\Omega$ be a polygon and triangulated into a quasi-uniform mesh $\mcal T_h$ with mesh size $h$. Again we assume that there exists  a sequence of meshes $\mcal T_1, \mcal T_2, \ldots, \mcal T_J = \mcal T_h$. The triangulation $\mcal T_1$ is a shape regular triangulation of $\Omega$ and $\mcal T_{k+1}$ is obtained by dividing each triangle  in $\mcal T_{k}$ into four congruent small triangles. We further assume triangulations contain no singular vertex defined in~\cite{Scott.L;Vogelius.M1985}.

Consider Stokes equations $$-\Delta u + \nabla p = f, \quad \div u = 0$$ with Dirichlet boundary condition $u|_{\partial \Omega}=0$. The homogenous boundary condition is not essential. As discussed before, the non-homogenous boundary condition will lead to a non-homogenous constraint and can be eliminated by one V-cycle or by a fast Poisson solver. 

We shall use exact divergence free elements and assume $\mcal K_i = \ker(\div)\cap \mcal V_i, i=1, \ldots, J$ are nested, i.e.,
$$
\mcal K_1 \subset \mcal K_2 \subset \ldots \subset \mcal K_J = \mcal K.
$$
Examples of such Stokes elements include Scott-Vogelius elements~\cite{Scott.L;Vogelius.M1985} for which the assumption that all triangulations contain non-singular vertex is needed. We chose $\mcal V \subset (H_0^1(\Omega))^2$ and $\mcal P \subset L^2_0(\Omega)$ as Scott-Vogelius elements~\cite{Scott.L;Vogelius.M1985}. For this problem, the $A$-norm is the $H^1$ semi-norm $|\cdot|_1 = \|\nabla(\cdot)\|$ which is a norm on $H_0^1$. 

Let $\mcal U$ be the $C^1$ finite element space on $\mcal T$ such that $\curl \mcal U = \mcal K$. Namely we have the so-called Stokes complex:
\begin{equation}\label{2DStokesexact}
\mcal U \stackrel{\curl}{\longrightarrow} \mcal V \stackrel{\div}{\longrightarrow} \mcal P \to 0.
\end{equation}
Similar exact sequence exists in each level $k = 1,2, \ldots, J$.

We further decompose each $\mcal K_k$ into subspaces associated to vertices. For a vertex $x_{k,j}\in \mcal T_k$, we denote by $\Omega_{k,j}$ the patch of $x_{k,j}$, i.e., union of all triangles containing $x_{k,j}$. Let $\mcal U_{k,j} = C_0^1(\Omega_{k,j})\cap \ \mcal U_{k}$ and $\mcal V_{k,j}= (H_0^1(\Omega_{k,j}))^2\cap \ \mcal V_k$ be the subspaces spanned by all basis functions with support in $\Omega_{k,j}$ and set $\mcal K_{i,j} = \mcal V_{i,j}\cap \ker (\div)$. By the construction of Scott-Vogeligus element, $\mcal K_{k,j} = \curl \mcal U_{k,j}$. The final decomposition is
$$
\mcal V = \sum_{k=1}^{J}\sum_{j=1}^{N_k}\mcal V_{k,j}, \text{ and }
\mcal K = \sum_{k=1}^{J}\sum_{j=1}^{N_k}\mcal K_{k,j}.
$$
In the correction form, the smoother applied to this decomposition is equivalent to solving a local Stokes problem with force $f - Av_i$ in the subdomain surrounding of a vertex with zero Dirichlet boundary condition on $\partial \Omega_{k,j}$. In the update form, it is solving the original Stokes problem with force $f$ but the boundary condition on $\partial \Omega_{k,j}$ is given by the current approximation of the velocity. Now the local problem is of considerable size (around a $200\times 200$ saddle point system) and the inexact solver using diagonal matrix of $A$ can reduce it to a SPD problem of smaller size (around $60\times 60$). 

\begin{remark}\rm
We are aware that more effective block preconditioners for the Stokes equations are available~\cite{Benzi.M;Golub.G;Liesen.J2005,Mardal2011a}. Multigrid method based on solving local problems is of more theoretic value since multigrid convergence theory for Stokes equations with the partial regularity assumption and/or for V-cycle method with few smoothing steps is rare. \qed
\end{remark}

We define $Q_k: \mcal U \to \mcal U_k$ the $L^2$ projection, for $k=1,2,\ldots, J$.  
For $v\in \mcal K$, since $\div v = 0$, we can find a unique $\phi\in \mcal U$ such that $v = \curl \phi$. We then define $\Pi_k v = \curl Q_k\phi$. It is easy to show that $\Pi_l\Pi_k = \Pi_l$ for $l\leq k$ due to the nestedness of spaces.

We document the stability and error estimate of $\Pi_k$ in the following lemmas.
\begin{lemma}\label{lm:Pi}
The operator $\Pi_k$ is stable in $L^2$ norm and
\begin{equation}\label{eq:Jackson}
\|v-\Pi_kv\| \lesssim h_k|v|_{1}, \quad \text{for all } v\in \mcal K.
\end{equation}
\end{lemma}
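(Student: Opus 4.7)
The plan is to reduce both the stability bound and the Jackson-type estimate to well-understood properties of the scalar $L^2$-projection $Q_k$ on $\mcal U_k$. Since $v\in \mcal K = \curl \mcal U$, write $v = \curl \phi$ with a unique $\phi \in \mcal U$, so that $\Pi_k v = \curl Q_k\phi$. In two dimensions $\curl \psi = (\partial_y\psi,-\partial_x\psi)$ for a scalar $\psi$, so one has the pointwise identity $|\curl \psi| = |\nabla \psi|$, which gives
\begin{equation*}
\|v\| = |\phi|_1,\qquad \|\Pi_k v\| = |Q_k\phi|_1,\qquad |v|_1 = |\curl \phi|_1 \eqsim |\phi|_2.
\end{equation*}
Thus the statement is equivalent to the $H^1$-stability $|Q_k\phi|_1 \lesssim |\phi|_1$ and the approximation bound $|\phi - Q_k\phi|_1 \lesssim h_k|\phi|_2$ for the $L^2$-projection on the $C^1$ finite element space $\mcal U_k$ on a quasi-uniform mesh.

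For stability, the plan is to invoke the standard argument of Bramble--Pasciak--Xu type: a local quasi-interpolant $I_k:\mcal U\to \mcal U_k$ exists with $H^1$-stability and first-order $L^2$-approximation $\|\psi - I_k\psi\|\lesssim h_k|\psi|_1$; combining this with the $L^2$-orthogonality of $Q_k$ and the inverse inequality $|w|_1\lesssim h_k^{-1}\|w\|$ valid on $\mcal U_k$, one obtains
\begin{equation*}
|Q_k\phi|_1 \leq |I_k\phi|_1 + |Q_k\phi - I_k\phi|_1 \lesssim |\phi|_1 + h_k^{-1}\|Q_k\phi - I_k\phi\| \lesssim |\phi|_1,
\end{equation*}
since $\|Q_k\phi - I_k\phi\|\leq \|\phi - I_k\phi\|+\|\phi-Q_k\phi\|\lesssim h_k|\phi|_1$.

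For the Jackson estimate, the same ingredients give
\begin{equation*}
|\phi - Q_k\phi|_1 \leq |\phi - I_k\phi|_1 + h_k^{-1}\|I_k\phi - Q_k\phi\| \lesssim h_k|\phi|_2,
\end{equation*}
using the second-order $L^2$-approximation $\|\phi - I_k\phi\|\lesssim h_k^2|\phi|_2$. Combining with $|\phi|_2 \eqsim |\curl\phi|_1 = |v|_1$ yields \eqref{eq:Jackson}.

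The main obstacle is the $H^1$-stability of the $L^2$-projection on the $C^1$ space $\mcal U_k$: it depends on having a quasi-interpolant with simultaneous $H^1$-stability and $L^2$-approximation of optimal order on $\mcal U_k$, which is less standard than for Lagrange elements but holds for the Argyris-type space that generates Scott--Vogelius velocities on a quasi-uniform mesh. A secondary point is that $\phi$ inherits boundary conditions from $v\in\mcal V\cap \ker(\div)$ (tangential derivatives vanish on $\partial\Omega$); by the construction of $\mcal U_k$ these are preserved by $Q_k$, so the argument closes without boundary corrections.
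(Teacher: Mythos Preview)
Your approach is essentially the same as the paper's: both reduce the $L^2$-stability of $\Pi_k$ to the $H^1$-stability of the scalar $L^2$-projection $Q_k$ on the stream-function space $\mcal U_k$ via the identity $\|\curl\psi\|=|\psi|_1$, and both reduce the Jackson estimate to approximation of $\mcal U_k$ in the $H^1$-seminorm. The only differences are in presentation. The paper simply invokes the $H^1$-stability of $Q_k$ on quasi-uniform meshes as a known fact, whereas you supply the standard quasi-interpolant/inverse-inequality argument for it; and for the Jackson bound the paper uses the projection identity $v-\Pi_k v=(I-\Pi_k)(v-v_k)$ together with the already-proved $L^2$-stability of $\Pi_k$ to pass to best approximation $\inf_{\phi_k\in\mcal U_k}\|\curl\phi-\curl\phi_k\|\lesssim h_k|\phi|_2$, while you estimate $|\phi-Q_k\phi|_1$ directly. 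Your route is a bit more self-contained; the paper's is shorter because it recycles the stability result just established.
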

\begin{proof}
It is well known that $Q_k$ is stable in both $L^2$-norm and $\curl$-norm on quasi-uniform meshes. Consequently $\Pi_k = \curl Q_k$ is stable in $L^2$-norm. It is obvious that $\Pi_k v = v$ for all $v\in \mcal K_k$. Therefore
%
%When $v = \curl \phi_k\in \mcal K_k$, we can chose $\phi _k\in \mcal U_k$ and thus . Therefore
$v-\Pi_kv = (I-\Pi_k)(v - v_k)$ for any $v_k\in \mcal K_k$ and consequently
$$
\|v-\Pi_kv\| \lesssim \inf_{v_k\in \mcal K_k}\|v-v_k\| = \inf_{\phi_k\in \mcal U_k}\| \curl \phi -\curl \phi_k \|\lesssim h_k|\phi|_{2} = h_k|v|_{1}.
$$
\end{proof}

\begin{lemma}
The operator $\Pi_k$ is stable in $H^{\sigma}$-norm for $\sigma \in [0,1/2)$, i.e.,
\begin{equation}\label{Pisigma}
\|\Pi_k v\|_{\sigma}\lesssim \|v\|_{\sigma}, \quad\text{ for all }v\in \mcal K.
\end{equation}
\end{lemma}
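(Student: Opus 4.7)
The plan is to prove $H^\sigma$-stability via a multilevel characterization of Sobolev norms built from $\Pi_k$ itself. Lemma~\ref{lm:Pi} provides the two ingredients needed: $L^2$-stability $\|\Pi_k v\| \lesssim \|v\|$ together with the first-order approximation $\|(I-\Pi_k) v\| \lesssim h_k |v|_1$. A standard BPX-type argument using these two bounds, combined with the composition identity $\Pi_l \Pi_k = \Pi_{\min(l,k)}$ (which is immediate from the analogous identity $Q_l Q_k = Q_{\min(l,k)}$ for $L^2$ projections onto nested spaces), yields the multilevel norm equivalence
$$
\|v\|_\sigma^2 \eqsim \sum_{l=1}^J h_l^{-2\sigma} \|(\Pi_l - \Pi_{l-1}) v\|^2, \quad \sigma \in [0, 1/2),
$$
for every $v \in \mcal K$, with the convention $\Pi_0 := 0$.

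Granting this characterization, the proof reduces to a short telescoping computation. Since $(\Pi_l - \Pi_{l-1}) \Pi_k$ equals $\Pi_l - \Pi_{l-1}$ for $l \leq k$ and vanishes for $l > k$, we have $\Pi_k v = \sum_{l=1}^k (\Pi_l - \Pi_{l-1}) v$. Applying the equivalence to $\Pi_k v$ on the left and truncating the sum on the right then gives
$$
\|\Pi_k v\|_\sigma^2 \eqsim \sum_{l=1}^k h_l^{-2\sigma} \|(\Pi_l - \Pi_{l-1}) v\|^2 \leq \sum_{l=1}^J h_l^{-2\sigma} \|(\Pi_l - \Pi_{l-1}) v\|^2 \eqsim \|v\|_\sigma^2,
$$
which is the desired bound.

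The main technical obstacle is the multilevel norm equivalence itself, particularly the upper estimate $\|v\|_\sigma^2 \lesssim \sum_l h_l^{-2\sigma}\|(\Pi_l - \Pi_{l-1}) v\|^2$, which is typically proved by a duality argument leveraging the approximation property of $\Pi_k$. The restriction $\sigma \in [0, 1/2)$ is not an artifact of the method but intrinsic to the setting: elements of $\mcal K$ lie in $(H_0^1(\Omega))^2$, so any multilevel decomposition through subspaces of $\mcal K$ naturally captures the $H_0^\sigma$ norm, and this coincides with the full $H^\sigma$ norm precisely when $\sigma < 1/2$. Above this threshold, boundary trace conditions would enter and would have to be handled by a separate argument.
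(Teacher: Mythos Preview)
Your route is different from the paper's and, although it can be completed, the key step is not justified by the ingredients you list. The paper proceeds by first proving $H^1$-stability of $\Pi_k$ directly: introduce the elementwise mean $\bar v$ of $v$ on $\mcal T_k$, write $|\Pi_k v|_1 = |\Pi_k v - \bar v|_1 \lesssim h_k^{-1}\|\Pi_k v - \bar v\|$ by the inverse inequality, and bound $\|\Pi_k v - v\|$ and $\|v - \bar v\|$ each by $h_k|v|_1$ using Lemma~\ref{lm:Pi} and standard approximation. With $L^2$- and $H^1$-stability in hand, operator interpolation on divergence-free subspaces (Wendland) gives $H^\sigma$-stability for $\sigma\in[0,1/2)$. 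This is short and self-contained.

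Your argument instead relies on the two-sided multilevel norm equivalence $\|v\|_\sigma^2 \eqsim \sum_l h_l^{-2\sigma}\|(\Pi_l-\Pi_{l-1})v\|^2$. The telescoping step that follows is fine, but the equivalence itself does not follow from just $L^2$-boundedness, the first-order approximation estimate, and the composition identity: the $\Pi_l$ are oblique projectors, and for such projectors even the $\sigma=0$ bound $\sum_l\|(\Pi_l-\Pi_{l-1})v\|^2\lesssim\|v\|^2$ is not automatic. Unwinding $\Pi_l=\curl Q_l$, that bound reads $\sum_l|(Q_l-Q_{l-1})\phi|_1^2\lesssim|\phi|_1^2$, i.e.\ an $H^1$-stable multilevel decomposition of the stream function, whose standard proofs already use $H^1$-stability of the projectors or a duality/regularity argument of the kind carried out later in Theorem~\ref{th:stokesnorm}. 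The opposite inequality likewise needs a strengthened Cauchy--Schwarz estimate beyond what you cite. So the machinery required to establish your norm equivalence subsumes the paper's direct endpoint-plus-interpolation argument, and the detour buys nothing.
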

\begin{proof}
For $\sigma = 0$, i.e., the stability of $\Pi_k$ in $L^2$-norm has been proved in Lemma \ref{lm:Pi}. For $v\in \mcal K$, we define $\bar v$ as the piecewise constant approximation of $v$ defined by $\int_{T}\bar v = \int _Tv$ for all $T\in \mcal T_k$. Obviously $\|v - \bar v \|\lesssim h_k|v|_1$.

We prove the stability of $\Pi_k$ in $H^1$-norm as follows:
\begin{align*}
|\Pi_k v|_1 = |\Pi_k v - \bar v |_1 \lesssim h_k^{-1}\|\Pi_k v - \bar v \| \lesssim h_k^{-1}\left (\|\Pi_k v - v \| + \|v - \bar v \|\right )\lesssim |v|_1.
\end{align*}
In the last step, we have used the approximation property of $\Pi_k$; c.f. Lemma \ref{lm:Pi}.

By the interpolation of divergence free spaces, c.f. Proposition 3.7 in~ \cite{Wendland.H2009}, we obtain the desired inequality \eqref{Pisigma}.

%Let $L_k$ be the $-\Delta$ operator with homogenous boundary condition restricted to the space $\mcal V_k$. We can write $|\Pi_k v|_1^2 = (L_k \Pi_kv, \Pi_kv)$ and $|v|_1^2 = (L v, v)$. Then by interpolation we could obtain
%$$
%(L_k^{\sigma} \Pi_kv, \Pi_k v) \lesssim (L^{\sigma} v, v).
%$$
%It is well known that $(L_k^{\sigma} \Pi_kv, \Pi_k v) \eqsim \|\Pi_k v\|^2_{\sigma}$ for $\Pi_k v\in \mcal V_k$ and $(L^{\sigma} v, v) \eqsim \|v\|^2_{\sigma}$ for $v\in \mcal V$, c.f. \cite{Bank.R;Dupont.T1981}. 
%We then obtain the desired inequality \eqref{Pisigma}.
\end{proof}

Following Xu~\cite{Xu1992}, we can obtain the following stable decomposition. For completeness, we include a proof here.
\begin{theorem}\label{th:stokesnorm}
%For $v\in \mcal K$, we chose $v_k = (\Pi_k - \Pi_{k-1}) v$ for $k\geq 1$ with notation $\Pi_{0} = 0$. Then 
The decomposition $v = \sum_{k=1}^J (\Pi_k - \Pi_{k-1}) v$ is stable in $A$-norm, i.e.,
\begin{equation}\label{Qknorm}
\sum _{k=1}^J |(\Pi_k - \Pi_{k-1}) v|_1^2 \lesssim |v|_{1}^2, \quad\text{ for all } v\in \mcal K.
\end{equation}
\end{theorem}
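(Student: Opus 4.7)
The plan is to follow the abstract multilevel stability framework of Xu \cite{Xu1992}, which takes as input precisely the two properties just established: the Jackson-type approximation $\|v-\Pi_kv\|\lesssim h_k|v|_1$ of Lemma \ref{lm:Pi} and the fractional stability $\|\Pi_kv\|_\sigma\lesssim\|v\|_\sigma$ for some $\sigma\in(0,1/2)$ from the preceding lemma. Set $w_k:=(\Pi_k-\Pi_{k-1})v\in\mcal V_k$. The crucial structural fact is the projection identity $\Pi_\ell\Pi_k=\Pi_{\min(k,\ell)}$, inherited from the $L^2$ projection $Q_k$ acting on stream functions; in particular one has $\Pi_j w_k=0$ for $j<k$, which makes the family $\{w_k\}$ behave like a multilevel frequency decomposition.

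First, apply the inverse inequality on the quasi-uniform mesh $\mcal T_k$ (valid since $w_k\in\mcal V_k$) to reduce the target estimate to a weighted $L^2$ bound:
\[
\sum_{k=1}^J|w_k|_1^2\lesssim\sum_{k=1}^J h_k^{-2}\|w_k\|^2,
\]
so the whole game becomes proving $\sum_k h_k^{-2}\|w_k\|^2\lesssim|v|_1^2$. A naive bound $\|w_k\|\leq\|(I-\Pi_k)v\|+\|(I-\Pi_{k-1})v\|\lesssim h_k|v|_1$ summed against the weight $h_k^{-2}$ produces a factor $J\sim|\log h|$, which would destroy the uniform convergence of the V-cycle; removing this logarithm is the entire point of invoking the fractional stability.

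To kill the logarithm I would use Xu's duality/strengthened Cauchy--Schwarz trick. Dualizing,
\[
\Bigl(\sum_k h_k^{-2}\|w_k\|^2\Bigr)^{1/2}=\sup\Bigl\{\sum_k h_k^{-1}(w_k,g_k):\sum_k\|g_k\|^2\leq 1\Bigr\},
\]
expand $(w_k,g_k)=((\Pi_k-\Pi_{k-1})v,g_k)$ and rearrange the resulting double sum over level pairs $(k,\ell)$ using the telescoping $\Pi_j w_k=0$. Interpolating the $L^2$ approximation from Lemma \ref{lm:Pi} against the $H^\sigma$-stability with $\sigma>0$ yields a geometric off-diagonal decay $\gamma^{|k-\ell|}$ with $\gamma\in(0,1)$, so the double sum collapses to $|v|_1\cdot\bigl(\sum_k\|g_k\|^2\bigr)^{1/2}\leq|v|_1$, giving the desired bound.

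The main obstacle is precisely this cancellation step: with $\sigma=0$ the off-diagonal decay degenerates to a bounded constant and the $|\log h|$ reappears. Thus the theorem rests decisively on the preceding $H^\sigma$-stability of $\Pi_k$, which was obtained by interpolation between the $L^2$ stability and an $H^1$-stability estimate specifically tailored to the divergence-free Scott--Vogelius subspaces and to the stream-function representation $\Pi_k v=\curl Q_k\phi$. Once the geometric off-diagonal decay is in hand, combining it with the inverse inequality of Step 1 yields the energy-stable decomposition claimed in the theorem.
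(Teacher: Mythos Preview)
Your proposal correctly identifies the framework (Xu~\cite{Xu1992}), the ingredients (Jackson estimate, fractional stability, inverse inequality), and the danger of the naive bound producing a $|\log h|$ factor. The gap is in the mechanism you describe for removing that factor. After the inverse-inequality reduction you dualize and promise a ``double sum over level pairs $(k,\ell)$'' obtained by ``telescoping $\Pi_j w_k=0$,'' but no second index $\ell$ has been introduced: you have not decomposed $v$ a second time, $\Pi_k$ is \emph{not} self-adjoint in $L^2$ (so you cannot move it onto $g_k$), and inserting $v=\sum_\ell w_\ell$ is vacuous because $(\Pi_k-\Pi_{k-1})w_\ell=\delta_{k\ell}w_k$. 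As written, no genuine off-diagonal structure appears, and the only available bound remains $\|w_k\|\lesssim h_k|v|_1$, which reinstates the logarithm.

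What the paper actually does---and what is missing from your sketch---is to introduce the \emph{second} family of projections $P_i:\mcal K\to\mcal K_i$, the $A$-orthogonal (Galerkin) projections, together with the Aubin--Nitsche duality estimate
\[
\|v-P_iv\|_{1-\alpha}\lesssim h_i^{\alpha}|v|_1,\qquad \alpha\in(1/2,1],
\]
coming from partial regularity of the Stokes problem. One then sets $v_i=(P_i-P_{i-1})v$, so that $\sum_i|v_i|_1^2=|v|_1^2$ exactly by $A$-orthogonality, and writes $\tilde\Pi_k v=\sum_{i\ge k}\tilde\Pi_k v_i$; this is the legitimate double sum. Each entry is estimated by the chain
\[
|\tilde\Pi_k v_i|_1\lesssim h_k^{-\alpha}\|\tilde\Pi_k v_i\|_{1-\alpha}\lesssim h_k^{-\alpha}\|v_i\|_{1-\alpha}\lesssim h_k^{-\alpha}h_i^{\alpha}|v_i|_1,
\]
using in turn the inverse inequality, the $H^{1-\alpha}$-stability of $\Pi_k$ (Lemma~\ref{lm:Pi} and its fractional counterpart, noting $1-\alpha\in[0,1/2)$), and the duality bound applied to $v_i=(I-P_{i-1})v_i$. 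The factor $(h_i/h_k)^{\alpha}=2^{-\alpha(i-k)}$ is the geometric decay that kills the logarithm. Your outline conflates the roles of $\Pi_k$ and $P_i$: the Jackson estimate for $\Pi_k$ and the $H^\sigma$-stability of $\Pi_k$ alone do not produce this decay; the duality estimate for $P_i$ is the indispensable third ingredient.
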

\begin{proof}
Let $P_i: \mcal K\to \mcal K_i$ be the projection in $A$-inner product. Then by the duality argument, c.f. Theorem 6.9 in~\cite{Chen2015b}, we have the following error estimate, for some $\alpha \in (1/2,1]$, 
\begin{equation}\label{Pialpha}
\|v - P_i v\|_{1-\alpha}\lesssim h_i^{\alpha}\|v\|_1, \text{ for all }v\in H_0^1(\Omega).
\end{equation}
Let $\tilde{\Pi}_k = \Pi_k - \Pi_{k-1}$, and $v_i=(P_i-P_{i-1})v$ for $i=1,2,\cdots, J$ with notation $P_0 = 0$. Using Cauchy-Swarchz inequality, it holds
\begin{align*}
\sum_{k=1}^J\|\nabla(\tilde{\Pi}_kv)\|^2=&\sum_{k=1}^J\sum_{i,j=k}^J\int_{\Omega}\nabla(\tilde{\Pi}_kv_i)
\cdot \nabla(\tilde{\Pi}_kv_j)\,dx \\
=&\sum_{i,j=1}^J\sum_{k=1}^{i\wedge j}\int_{\Omega}\nabla(\tilde{\Pi}_kv_i)\cdot\nabla(\tilde{\Pi}_kv_j)\,dx \\
\leq & \sum_{i,j=1}^J\sum_{k=1}^{i\wedge j}\|\nabla(\tilde{\Pi}_kv_i)\|\|\nabla(\tilde{\Pi}_kv_j)\|,
\end{align*}
where $i\wedge j=\min\{i,j\}$.
According to the inverse inequality, the stability of $\Pi_k$, c.f., \eqref{Pisigma}, and the error estimate of $P_i$ c.f. \eqref{Pialpha}, we have
$$
\|\nabla(\tilde{\Pi}_kv_i)\|\lesssim h_k^{-\alpha}\|\tilde{\Pi}_kv_i\|_{1-\alpha}\lesssim h_k^{-\alpha}\|v_i\|_{1-\alpha}\lesssim h_k^{-\alpha}h_i^{\alpha}|v_i|_{1}.
$$
Combining last two inequalities, we get from the strengthened Cauchy-Swarchz inequality
\begin{align*}
\sum_{k=1}^J\|\nabla (\Pi_k - \Pi_{k-1})v\|^2
\lesssim & 
\sum_{i,j=1}^J\sum_{k=1}^{i\wedge j}h_k^{-2\alpha}h_j^{\alpha}h_i^{\alpha}|v_i|_{1}|v_j|_{1}
\lesssim \sum_{i,j=1}^Jh_{i\wedge j}^{-2\alpha}h_j^{\alpha}h_i^{\alpha}|v_i|_{1}|v_j|_{1} \\
\lesssim & 
\sum_{i,j=1}^J\left (\frac{1}{2}\right )^{\alpha|i-j|}|v_i|_{1}|v_j|_{1} \lesssim \sum_{i=1}^J |v_i|_1^2=|v|_{1}^2.
\end{align*}
\end{proof}

We continue to show the micro-decomposition of the slice $(\Pi_k - \Pi_{k-1})v$ is stable in the energy norm.
\begin{lemma}\label{lm:stokesmicro}
For $v_k = (\Pi_k - \Pi_{k-1})v \in \mcal K_k$, there exists a decomposition $v_k = \sum_{j=1}^{N_k}v_{k,j}$ with $v_{k,j}\in \mcal K_{k,j}$ such that
$$
\sum _{j=1}^{N_j}|v_{k,j}|_1^2 \lesssim |v_k|_1^2.
$$
\end{lemma}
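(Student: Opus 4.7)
The plan is to lift the decomposition from the $C^1$ stream function space $\mcal U_k$ and transfer it via $\curl$ to $\mcal K_k$, mirroring the two dimensional micro-decomposition argument for mixed Poisson (Lemma~\ref{lm:poissonmicro}) but at one higher differential order. Writing $v = \curl \phi$ with $\phi \in \mcal U$ (which is possible since $v \in \mcal K = \curl \mcal U$), the definition $\Pi_k v = \curl Q_k \phi$ gives $v_k = \curl \phi_k$ with $\phi_k := (Q_k - Q_{k-1})\phi \in \mcal U_k$. I then decompose $\phi_k$ in the nodal basis of $\mcal U_k$ at the vertex patches, $\phi_k = \sum_{j=1}^{N_k}\phi_{k,j}$ with $\phi_{k,j}\in \mcal U_{k,j}$, and set $v_{k,j} := \curl \phi_{k,j}\in \mcal K_{k,j}$, so that $v_k = \sum_j v_{k,j}$.

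To estimate $\sum_j |v_{k,j}|_1^2$ I chain three standard ingredients. By the inverse inequality on the quasi-uniform mesh $\mcal T_k$, $|v_{k,j}|_1 = |\phi_{k,j}|_2 \lesssim h_k^{-2}\|\phi_{k,j}\|$. The vertex-patch nodal basis decomposition of the $C^1$ space $\mcal U_k$ is $L^2$-stable in the sense $\sum_{j=1}^{N_k}\|\phi_{k,j}\|^2 \lesssim \|\phi_k\|^2$, once the degrees of freedom at each vertex (values and derivatives) are grouped together and a reference-element scaling argument is applied. Finally, $Q_{k-1}\phi_k = Q_{k-1}(Q_k - Q_{k-1})\phi = 0$, so $\phi_k$ is $L^2$-orthogonal to $\mcal U_{k-1}$ and $\phi_k = (I - Q_{k-1})\phi_k$; since $\mcal U_{k-1}$ contains piecewise polynomials of degree at least two, the standard Bramble--Hilbert estimate for the $L^2$ projection gives $\|\phi_k\| \lesssim h_k^2 |\phi_k|_2 = h_k^2 |v_k|_1$.

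Chaining these three bounds produces
$$
\sum_{j=1}^{N_k}|v_{k,j}|_1^2 \;\lesssim\; h_k^{-4}\sum_{j=1}^{N_k}\|\phi_{k,j}\|^2 \;\lesssim\; h_k^{-4}\|\phi_k\|^2 \;\lesssim\; |v_k|_1^2,
$$
which is the claim. The main obstacle I expect is the $L^2$-stability of the nodal basis decomposition on $\mcal U_k$: because the DOFs at a single vertex mix function values with first and second derivatives, each with a distinct natural scaling in $h_k$, this stability has to be verified by a careful scaling argument on a reference patch tailored to the actual DOFs of the Scott--Vogelius stream function element, rather than invoked from a generic Lagrange-type Riesz-basis fact.
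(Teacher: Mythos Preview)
Your proposal is correct and takes essentially the same approach as the paper: lift to the stream function $\phi_k=(Q_k-Q_{k-1})\phi$, decompose it over vertex patches in $\mcal U_k$, apply the inverse inequality together with the $L^2$-stability of that patch decomposition, and finish with an approximation estimate for $I-Q_{k-1}$. The only (cosmetic) difference is in the last step: you invoke the second-order bound $\|\phi_k\|=\|(I-Q_{k-1})\phi_k\|\lesssim h_k^2|\phi_k|_2=h_k^2|v_k|_1$ directly, whereas the paper chains two first-order bounds, namely $\|\phi_k\|\lesssim h_k\|\curl\phi_k\|=h_k\|v_k\|$ followed by $\|v_k\|=\|(I-\Pi_{k-1})v_k\|\lesssim h_k|v_k|_1$ via Lemma~\ref{lm:Pi}.
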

\begin{proof}
Recall that $\phi_k = Q_k \phi$ and $v = \curl \phi$. Let $\phi_{k} - \phi_{k-1} = \sum _{j=1}^{N_k}\psi_{k,j}$ be a decomposition such that $\supp \psi_{k,j}\in \Omega_{k, j}$. Such decomposition can be obtained by partition the basis decomposition. For example, for a basis function associated to an edge, it can be split as half and half to the patch of each vertex of this edge. We then set $v_{k,j} = \curl \psi_{k,j}$ and obtain the decomposition $v_k = \sum_{j=1}^{N_k}v_{k,j}$.  
%Since $Q_{k-1}^2 = Q_{k-1}$ and $Q_{k-1}Q_{k} = Q_{k}$, 
%we can also write $v_k = \curl (\phi_{k} - \phi_{k-1}) = \curl (I-Q_{k-1})\psi_{k,j}$. 
Then
\begin{align*}
&\sum_{j=1}^{N_k}|v_{k,j}|_1^2 \leq \sum_{j=1}^{N_k}|\psi_{k,j}|_2^2\lesssim \sum_{j=1}^{N_k} h_k^{-4}\|\psi_{k,j}\|^2\lesssim h_k^{-4}\|\phi_k - \phi_{k-1}\|^2 \\
& = h_k^{-4}\|(I-Q_{k-1})(\phi_k - \phi_{k-1})\|^2 \lesssim h_k^{-2} \|\curl(\phi_k - \phi_{k-1})\|^2 = h_k^{-2} \|v_k\|^2.
\end{align*}
We write $v_k = (\Pi_{k} - \Pi_{k-1})v =  (I - \Pi_{k-1})(\Pi_{k} - \Pi_{k-1})v = (I - \Pi_{k-1})v_k$ and use the $L^2$-norm estimate of $\Pi_k$ to conclude
$$
h_k^{-2} \|v_k\|^2\lesssim |v_k|_1^2.
$$
Then the desired inequality follows.
\end{proof}

Combination of Theorem \ref{th:stokesnorm} and Lemma \ref{lm:stokesmicro} leads to the stability of the decomposition $\mcal K = \sum_{k=1}^J\sum_{j=1}^{N_j}\mcal K_{k,j}$ in $A$-norm. 
\begin{theorem}
For every $v\in \mcal K$, there exists a decomposition $v = \sum_{k=1}^J\sum_{j=1}^{N_k}v_{k,j}$ with $v_{k,j}\in \mcal K_{k,j}$ such that
$$
\sum_{k=1}^J\sum _{j=1}^{N_j}|v_{k,j}|_1^2 \lesssim |v|_1^2.
$$
\end{theorem}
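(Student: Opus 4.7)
The plan is to combine the macro-level stability from Theorem \ref{th:stokesnorm} with the micro-level stability from Lemma \ref{lm:stokesmicro} by a straightforward composition. Specifically, I would first apply Theorem \ref{th:stokesnorm} to decompose any $v\in \mcal K$ as $v = \sum_{k=1}^J v_k$ with $v_k = (\Pi_k - \Pi_{k-1})v \in \mcal K_k$, obtaining the energy-stable bound
$$
\sum_{k=1}^J |v_k|_1^2 \lesssim |v|_1^2.
$$

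Next, for each fixed level $k$, I would invoke Lemma \ref{lm:stokesmicro} to further split each slice $v_k$ into subspace components $v_k = \sum_{j=1}^{N_k} v_{k,j}$ with $v_{k,j}\in \mcal K_{k,j}$, satisfying the local stability bound
$$
\sum_{j=1}^{N_k} |v_{k,j}|_1^2 \lesssim |v_k|_1^2.
$$
Substituting this into the macro-level bound and summing over $k$ then yields the desired two-level stable decomposition.

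Since both ingredients are already established, the only thing to verify carefully is that the micro-decomposition produced by Lemma \ref{lm:stokesmicro} indeed places each $v_{k,j}$ in the correct constrained subspace $\mcal K_{k,j}$; this is automatic from the construction via the Stokes complex \eqref{2DStokesexact}, since $v_{k,j} = \curl \psi_{k,j}$ with $\psi_{k,j}\in \mcal U_{k,j}$ is automatically divergence free and supported in $\Omega_{k,j}$, so $v_{k,j}\in \mcal K_{k,j}$ by the definition $\mcal K_{k,j} = \curl \mcal U_{k,j}$.

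Given that both component lemmas are in hand, there is essentially no genuine obstacle in this final step; it is a bookkeeping argument. The real substance of the theorem sits in Theorem \ref{th:stokesnorm} (the multilevel slice stability, which relies on the $H^\sigma$-stability of $\Pi_k$ and the duality-based error estimate \eqref{Pialpha}) and in Lemma \ref{lm:stokesmicro} (the inverse inequality together with the $L^2$ error estimate $h_k^{-1}\|v_k\| \lesssim |v_k|_1$ coming from $v_k = (I-\Pi_{k-1})v_k$). The final theorem just chains these together.
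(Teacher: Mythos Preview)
Your proposal is correct and matches the paper's own argument exactly: the paper states the theorem immediately after remarking that ``Combination of Theorem \ref{th:stokesnorm} and Lemma \ref{lm:stokesmicro} leads to the stability of the decomposition,'' with no further proof given. Your added check that $v_{k,j}=\curl\psi_{k,j}\in\mcal K_{k,j}$ is a harmless elaboration of what the paper leaves implicit.
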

%\begin{proof}
%We first chose the macro decomposition $v = \sum _{k=1}^J v_k$ with $v_k = (\Pi_{k} - \Pi_{k-1})v$. We then apply Theorem \ref{th:stokesnorm} and Lemma \ref{lm:stokesmicro} to conclude
%$$
%\sum_{k=1}^J\sum_{j=1}^{N_k}|v_{k,j}|_1^2 \lesssim \sum_{k=1}^J h_k^{-2}\|v_k\|^2\lesssim |v|_1^2.
%$$
%\end{proof}

The assumption (SCS) is just that for multilevel $H^1$ finite element spaces $\mcal V_1\subset \mcal V_2 \subset \ldots \subset \mcal V_J$ and can be proved similarly as before. Note that since $\mcal V_{k,j}\subset (H_0^1(\Omega_{k,j}))^2$, for functions $v_{k,j}$ in $\mcal V_{k,j}$, the norm equivalence $h_k^{-2}\|v_{k,j}\|\eqsim |v_{k,j}|_1$ holds with an $\mcal O(1)$ constant.

\section{Conclusion and Future Work}
In this paper we have developed a multigrid method for saddle point systems based on a multilevel subspace decomposition of the constraint space $\mcal K$. We have proved the convergence of such method based on the stable decomposition and strengthened Cauchy Schwarz inequality. 
For some mixed finite element discretizations of Poisson, Darcy, and Stokes equations, we have verified SD and SCS assumptions and consequently obtained a multigrid method for the resulting saddle point systems. In a forthcoming work \cite{Chen2015c}, we shall examine a plate bending problem which is a fourth order elliptic equation. The key is to find a underlying exact sequence. 

\bibliographystyle{abbrv}

%\bibliography{/Dropbox/Math/biblib/library}
\end{document}